\documentclass[11pt,a4paper]{article}
\usepackage[utf8]{inputenc}
\usepackage{amsmath,amssymb,amsthm}
\usepackage{geometry}
\usepackage{hyperref}
\usepackage{tikz-cd}

\newtheorem{theorem}{Theorem}[section]
\newtheorem{definition}[theorem]{Definition}
\newtheorem{proposition}[theorem]{Proposition}
\newtheorem{lemma}[theorem]{Lemma}
\newtheorem{remark}[theorem]{Remark}

\newtheorem{hypothesis}[theorem]{Hypothesis}

\title{\textbf{Radon Measure Representations for Infinite-Width Neural Networks with Singular Activations}}
\author{Mathias Dus \thanks{IRMA, Strasbourg, France (\texttt{mathias.dus@math.unistra.fr}).}}
\date{\today}
\everymath{\displaystyle}
\begin{document}

\maketitle
\begin{abstract}
The theoretical foundation of infinite-width shallow neural networks relies heavily on continuous integral representations and Barron spaces. Recently, harmonic analysis---specifically the Radon and Ridgelet transforms---has emerged as a powerful tool to invert these representations and compute the optimal network weights. However, a major analytical difficulty remains: standard neural network activation functions exhibit severe spectral singularities at the frequency origin. To bypass this divergence, existing frameworks either restrict the theory to specific activation families or resort to Lizorkin distribution spaces, which require working in quotient spaces modulo polynomial subspaces.

In this paper, we present a purely distributional framework for the generalized Radon transform $\mathcal{R}_\sigma$ that operates on a broad class of tempered distribution activation functions without invoking quotient topologies. By defining a regularized spectrum formulation $\widehat{g}(\rho) = (i\rho)^\alpha \widehat{\sigma}(\rho)$, we rigorously absorb the origin singularities directly within standard functional spaces exhibiting asymptotic spatial decay. While enforcing asymptotic decay naturally excludes polynomial growth, our approach avoids quotient algebra and provides a direct reconstruction in $\mathcal{S}'(\mathbb{R}^d)$. Building upon this framework, we show that under a definite parity assumption on the activation, there exists an exact linear isometry between Barron functions and their optimal weight measures within the class of representations with absolutely continuous angular marginals.
\end{abstract}

\noindent \textbf{Keywords:} Infinite-width neural networks, Barron spaces, Generalized Radon transform, Tempered distributions, Singular activations, Total variation

\section{Introduction}

The mathematical formalization of modern deep learning, particularly the approximation capabilities of infinite-width shallow neural networks, has increasingly converged toward the study of continuous integral representations.
Pioneered by the foundational works on Barron spaces \cite{barron1993universal, e2019priori}, this functional perspective allows for the characterization of functions that are approximable by shallow neural networks.
In this continuum limit, a network is parameterized by a measure over the parameter space, and the fundamental question shifts to characterizing the functional spaces spanned by these measures and finding the optimal measure representations for a given target function.

To formalize these target functions, much of the foundational literature relies on the concept of Spectral Barron spaces, which characterize network representability through the decay properties of the target's Fourier transform. Traditionally, a function resides in the spectral Barron space if its spectrum satisfies a finite moment condition, typically expressed as $\int_{\mathbb{R}^d} (1+|\xi|^2) |\widehat{f}(\xi)| d\xi < \infty$ \cite{barron1993universal, siegel2020approximation}. This spectral perspective is highly advantageous: it translates the spatial complexity of the target function into an integrability condition in the frequency domain, creating a natural theoretical bridge toward harmonic analysis and measure-theoretic representations.

The structural importance of spectral Barron spaces is further underscored by their ability to genuinely circumvent the curse of dimensionality \cite{bach2017breaking, suzuki2019adaptivity}. In classical approximation theory, achieving dimension-independent convergence rates typically requires an order of Sobolev regularity proportional to half the ambient dimension ($d/2$). In contrast, functions in the Barron space achieve a $\mathcal{O}(m^{-1/2})$ Monte-Carlo approximation rate—where $m$ denotes the width of the shallow neural network—without requiring such prohibitive spatial differentiation. As recently re-examined by Schavemaker \cite{schavemaker2025barron}, this behavior stems from an atypical, non-classical structural regularity intrinsic to the spectral decay condition. Consequently, Barron spaces provide the precise functional framework required to remain effective in high-dimensional machine learning tasks, making the constructive synthesis of their optimal representing measures a central analytical objective.

Having established the theoretical robustness of Barron spaces, the subsequent operational question shifts to the computation of these optimal measures. Recent literature has successfully leveraged tools from harmonic analysis, establishing a profound connection between continuous neural networks, the Radon transform, and Ridgelet theory \cite{radon1917uber, candes1998ridgelets, candes1999harmonic}.
Works \cite{ongie2020function,parhi2021banach} have explicitly demonstrated that learning the optimal weights of a bounded-norm infinite-width network corresponds to solving an inverse problem in a Radon domain, often linked to the theory of ridge splines.
Simultaneously, \cite{sonoda2017neural} proved the universal approximation property for unbounded activations by deploying the Ridgelet transform as a continuous backprojection filter.

Despite these significant advancements, the current harmonic analysis of neural networks remains structurally constrained by the handling of spectral singularities.
The inversion of the generalized Radon transform inherently requires a filtering process in the frequency domain that exhibits a severe singularity at the origin ($\rho = 0$).
To bypass this divergence, existing frameworks either strictly restrict the activation function to specific families such as ReLU \cite{ongie2020function, savarese2019how}, or circumvent the origin singularity by restricting their domain to Lizorkin distributions \cite{sonoda2017neural}.
While mathematically rigorous and effective, working within Lizorkin spaces requires operating in quotient topologies modulo arbitrary polynomial subspaces by testing against functions with vanishing moments of all orders.

Rather than resorting to quotient algebras, a direct and natural alternative is to work within standard functional spaces characterized by asymptotic decay at infinity, such as weighted Lebesgue spaces $L^1_p(\mathbb{R}^d)$.
While enforcing asymptotic spatial decay naturally excludes polynomial growth, it completely removes the need for quotienting.
This perspective preserves the standard functional topology of $\mathbb{R}^d$, allows for an unambiguous pointwise evaluation of the target distributions without equivalence classes, and provides a direct bridge toward Banach space duality and measure-theoretic representations.

\paragraph{Contributions.}
In this paper, we present a rigorous, purely distributional operator analysis that entirely circumvents the need for Lizorkin spaces or ad-hoc initial parity assumptions. Our main contributions form a systematic three-step analytical progression:
\begin{enumerate}
\item \textbf{A Direct Tempered Distribution Framework for the Radon Transform:} As a foundational step, we define the generalized Radon transform $\mathcal{R}_\sigma$ for any tempered distribution $\sigma \in \mathcal{S}'(\mathbb{R})$ using the Hörmander pullback. This grounds the continuous neural network formulation directly in the standard topology of $\mathcal{S}'(\mathbb{R}^d)$, avoiding the need for Lizorkin quotient algebras. Concurrently, we resolve the origin singularity by introducing a generalized inversion filter derived from a regularized spectrum $\widehat{g}(\rho) = (i\rho)^\alpha \widehat{\sigma}(\rho)$, which absorbs origin divergences while operating on standard spaces with asymptotic decay.
    \item \textbf{Transition to Weighted Radon Measures and Absolute Continuity:} Moving beyond the purely distributional framework, we formalize the network's parameter space using the Banach space of weighted Radon measures $\mathcal{M}_p(\mathbb{S}^{d-1} \times \mathbb{R})$ such that its marginal on the spherical variable is absolutely continuous with respect to the spherical Lebesgue measure $dw$. 
\item \textbf{Existence of an Optimal Linear Isometry for Absolutely Continuous Barron Representations:} A well-known feature of continuous neural network theory is that the classical Barron norm is intrinsically defined as a variational infimum over all representing Radon measures. This variational definition renders optimal representations inherently \textit{non-linear}: the minimal-norm measure of a sum of target functions is generally not the sum of their individual optimal measures, often due to directional sparsity. In this paper, we show that when restricting the parameter space to representing measures whose angular marginals are absolutely continuous with respect to the spherical Lebesgue measure $\underline{dw}$ (thereby ruling out angular Dirac singularities), this variational non-linearity can be entirely bypassed. While our general representation framework operates without parity constraints, we demonstrate that whenever the regularized spectrum $\widehat{g}$ exhibits a definite parity (purely even or odd), there exists an optimal analysis operator $P : \mathcal{B} \to \mathcal{M}_p$ that is strictly \textit{linear} and \textit{isometric}. Unlike Banach space isometries in variational spline theory---which rely on biorthogonal boundary conditions to quotient out polynomial null spaces \cite{parhi2021banach}---our mapping directly resolves the representational non-linearity of the Barron quotient norm without auxiliary boundary constraints. Acting as an exact right-inverse to the synthesis operator $\mathcal{R}_\sigma^*$, this mapping explicitly assigns to any Barron function its minimal-norm representing measure within this absolutely continuous class while perfectly preserving the superposition principle.

\end{enumerate}

\paragraph{Organization of the Paper.}
The remainder of this paper is structured as follows. Section \ref{sec:generalized_radon} formally constructs the abstract generalized Radon transform $\mathcal{R}_\sigma$ using the Hörmander pullback to provide a well-defined topological extension when the activation is a distribution. Section \ref{sec:adjoint_radon} rigorously determines the dual synthesis operator $\mathcal{R}_\sigma^*$ through duality arguments. Section \ref{sec:caracterization_barron_distrib} forms the analytical core of our work by introducing the filtering operator $\mathcal{T}_g$ based on the regularized spectrum of the activation function, culminating in the proof of the neural representation theorem in the distributional sense. Section \ref{sec:caracterization_barron_tv} bridges this harmonic framework with the Banach space formulation by introducing the weighted Radon measures $\mathcal{M}_p$ and establishing a Sobolev-spectral Barron embedding. Finally, Section \ref{sec:optimal_operator} constructs an optimal linear analysis operator that assigns the exact minimal-norm representative measure to any Barron function by exploiting the even-odd nature of the activation.

\subsection{Notations and Conventions}
We work in the Euclidean space $\mathbb{R}^d$ equipped with its standard inner product $\langle \cdot, \cdot \rangle$ and the associated norm $|\cdot|$.
The unit cylinder $\mathbb{S}^{d-1} \times \mathbb{R}$ parameterizes the set of oriented hyperplanes, where $\mathbb{S}^{d-1} = \{w \in \mathbb{R}^d \mid |w|
= 1\}$ denotes the unit sphere equipped with its normalized Lebesgue measure $\underline{dw}$ (such that $\int_{\mathbb{S}^{d-1}} \underline{dw} = 1$).
For spectral processing, we adopt the usual convention for the Fourier transform $\mathcal{F}$ of a function $u \in L^1(\mathbb{R}^n)$ (with $n=1$ or $n=d$):
\begin{equation*}
    \mathcal{F}\{u\}(\xi) = \widehat{u}(\xi) = \int_{\mathbb{R}^n} u(x) e^{-i \langle \xi, x \rangle} \, dx.
\end{equation*}
The associated inversion formula is given by:
\begin{equation*}
    \mathcal{F}^{-1}\{v\}(x) = \frac{1}{(2\pi)^n} \int_{\mathbb{R}^n} v(\xi) e^{i \langle x, \xi \rangle} \, d\xi.
\end{equation*}
These definitions extend to tempered distributions by the standard duality $\langle \widehat{T}, \phi \rangle_{\mathbb{R}^n} = \langle T, \widehat{\phi} \rangle_{\mathbb{R}^n}$, for $T \in \mathcal{S}'(\mathbb{R}^n)$ and $\phi \in \mathcal{S}(\mathbb{R}^n)$.
\section{Definition of the Generalized Radon Transform}\label{sec:generalized_radon}

In order to manipulate standard activation functions (such as ReLU, the sigmoid, or the sign function) that are not integrable on $\mathbb{R}$,  we place ourselves within the abstract framework of Schwartz's theory of distributions by utilizing the concept of the pullback of a distribution.

\begin{definition}[Activation Distribution]\label{def:sig_distribution}
The activation $\sigma$ is assumed to be a general tempered distribution on the real line, i.e., $\sigma \in \mathcal{S}'(\mathbb{R})$.
\end{definition}

To proceed, we need the concept of the pullback of a distribution.
\begin{definition}[Definition of the pullback distribution/Hörmander's Theorem \cite{hormander2015analysis}]
Let $\phi : \mathbb{R}^m \to \mathbb{R}^n$ be a smooth $C^\infty$ map.
If $\phi$ is a \textit{submersion}, meaning its Jacobian matrix has maximal rank $n$ at every point (which implies $m \ge n$), then for any distribution $T \in \mathcal{D}'(\mathbb{R}^n)$, one can define the pullback distribution $\phi^* T \in \mathcal{D}'(\mathbb{R}^m)$ in the following way:
 
\begin{equation}
    \langle \phi^* T, \psi \rangle_{\mathbb{R}^m} = \langle T, \psi_\phi \rangle_{\mathbb{R}^n},
\end{equation}
where the reduced test function $\psi_\phi \in \mathcal{S}(\mathbb{R}^n)$, called the integral over the fibers, is obtained by:
\begin{equation}
    \psi_\phi(y) = \int_{\phi^{-1}(y)} \psi(x) \, d\mu_y(x),
\end{equation}
with $d\mu_y(x)$ denoting the Lebesgue measure induced on the submanifold
$\phi^{-1}(y)$.
\end{definition}

For any fixed direction $w \in \mathbb{S}^{d-1}$ and bias $b \in \mathbb{R}$, the affine map $\phi_{w,b} : \mathbb{R}^d \to \mathbb{R}$ defined by $\phi_{w,b}(x) = \langle w, x \rangle - b$ is a submersion (since its gradient is $w$, a non-zero vector). Consequently, the pullback of the distribution $\sigma$ by $\phi_{w,b}$, denoted $\phi_{w,b}^* \sigma$, is a rigorously defined tempered distribution in $\mathcal{S}'(\mathbb{R}^d)$. This allows us to define the generalized Radon transform for any test function without relying on an absolutely convergent spatial integral.

\begin{definition}[Generalized Radon Transform $\mathcal{R}_\sigma$]
For an analysis function $f \in \mathcal{S}(\mathbb{R}^d)$ and an activation distribution $\sigma \in \mathcal{S}'(\mathbb{R})$, the generalized Radon transform, denoted $\mathcal{R}_\sigma f$, is defined on the parameter cylinder $\mathbb{S}^{d-1} \times \mathbb{R}$ by the distributional duality bracket on $\mathbb{R}^d$:
\begin{equation}
    \mathcal{R}_\sigma f(w, b) = \langle \phi_{w,b}^* \sigma, f \rangle_{\mathbb{R}^d},
\end{equation}
where $\phi_{w,b}^* \sigma$ is the pullback distribution of $\sigma$ by the submersion $x \mapsto \langle w, x \rangle - b$.
\end{definition}

\begin{remark}\label{rm:espece_arrive_R_sigma}
This abstract definition is a generalization of the formal spatial integral $\int_{\mathbb{R}^d} f(x) \sigma(\langle w, x \rangle - b) \, dx$. As shown in the proof below, for all $w \in \mathbb{S}^{d-1}$, the resulting mapping $b \mapsto \mathcal{R}_\sigma f(w, b)$ is infinitely differentiable. Furthermore, $\mathcal{R}_\sigma f(w, \cdot)$ possesses at most slow (polynomial) growth with respect to the bias variable $b$.
\end{remark}

\begin{proof}[Proof of the properties stated in Remark \ref{rm:espece_arrive_R_sigma}]
Let $f \in \mathcal{S}(\mathbb{R}^d)$ and an orientation $w \in \mathbb{S}^{d-1}$ be fixed. We study the function of the bias variable defined by the pullback:
\begin{equation}
    h(b) := \mathcal{R}_\sigma f(w, b) = \langle \phi_{w,b}^* \sigma, f \rangle_{\mathbb{R}^d}.
\end{equation}

Let us define the reduced 1D test function obtained by integrating over these spatial slices:
\begin{equation}
    \psi_w(s) := \int_{\langle w, x \rangle = s} f(x) \, d\mu_s(x).
\end{equation}
Since $f \in \mathcal{S}(\mathbb{R}^d)$, we have $\psi_w \in \mathcal{S}(\mathbb{R})$.

The action of the spatial pullback is strictly equivalent to the 1D duality bracket evaluated at the shifted coordinate $s = t+b$:
\begin{equation}\label{eq:Rsigmaf_form}
    h(b) = \langle \sigma(t), \psi_w(t + b) \rangle_t.
\end{equation}

Now, according to the structure theorem \ref{th:structure_distrib}, since $\sigma \in \mathcal{S}'(\mathbb{R})$, there exist an integer $m \ge 0$ and a continuous function $F$ with at most polynomial growth of order $k$ (i.e., $|F(t)| \le C(1+|t|)^k$) such that $\sigma = D^m F$ in the sense of distributions.
By definition of the distributional derivative, the action shifts to the test function:
\begin{equation} \label{eq:h_IPP}
    h(b) = \langle D^m F(t), \psi_w(t + b) \rangle_t = (-1)^m \int_{\mathbb{R}} F(t) \frac{\partial^m}{\partial t^m} \psi_w(t + b) \, dt.
\end{equation}

As $F$ has at most polynomial growth and $\psi_w$ is in $\mathcal{S}(\mathbb{R})$, we can apply the classical theorem of differentiation under the integral sign to prove that $h \in C^\infty(\mathbb{R})$. Moreover, since the integral in \eqref{eq:h_IPP} is absolutely convergent, we can perform a change of variables to obtain:

$$
h(b) = (-1)^m \int_{\mathbb{R}} F(t - b) \frac{\partial^m}{\partial t^m} \psi_w(t) \, dt
$$
so that using the polynomial growth of $F$ (of order $k \in \mathbb{N}$) given by the structure theorem \ref{th:structure_distrib}:

$$
|h(b)| \leq C \int_{\mathbb{R}} (1 + |t|^k + |b|^k)  \left| \frac{\partial^m}{\partial t^m} \psi_w(t) \right| \, dt
$$
which immediately gives the polynomial growth of $h$ and concludes the proof.
\end{proof}

The Fourier transform in the $b$ variable of $\mathcal{R}_\sigma f(w, \cdot)$ will play a fundamental role. It is expressed in Proposition \ref{prop:spectral_representation}.
\begin{proposition}[Spectral Representation of the Generalized Radon Transform]\label{prop:spectral_representation}
Let $f \in \mathcal{S}(\mathbb{R}^d)$ and $\sigma \in \mathcal{S}'(\mathbb{R})$ be an activation distribution. For any fixed orientation $w \in \mathbb{S}^{d-1}$, the 1D Fourier transform of the generalized Radon transform with respect to the bias variable $b$ is given by:
\begin{equation}\label{eq:tfb_Rf}
    \mathcal{F}_b\{\mathcal{R}_\sigma f(w, \cdot)\}(\rho) = \widehat{f}(\rho w) \overline{\widehat{\sigma}(\rho)}
\end{equation}
in the sense of tempered distributions on $\mathbb{R}$.
\end{proposition}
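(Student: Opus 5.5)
The plan is to recognise $b \mapsto \mathcal{R}_\sigma f(w,b)$ as the convolution of a Schwartz function with a tempered distribution, and then apply the convolution theorem in $\mathcal{S}'(\mathbb{R})$ together with the Fourier slice theorem. Fix $w \in \mathbb{S}^{d-1}$. Starting from formula \eqref{eq:Rsigmaf_form} in the proof of Remark \ref{rm:espece_arrive_R_sigma}, $h(b) = \langle \sigma(t), \psi_w(t+b)\rangle_t$, I introduce the reflected distribution $\widetilde{\sigma}$, defined by $\langle \widetilde\sigma, \phi\rangle = \langle \sigma, \phi(-\cdot)\rangle$. By the definition of the convolution of $\mathcal{S}'$ with $\mathcal{S}$,
\begin{equation*}
(\widetilde{\sigma} * \psi_w)(b) = \langle \widetilde\sigma(t), \psi_w(b-t)\rangle_t = \langle \sigma(t), \psi_w(b+t)\rangle_t = h(b).
\end{equation*}
Hence $h = \widetilde\sigma * \psi_w$. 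This identification is consistent with the earlier remark, since such a convolution is $C^\infty$ with polynomial growth and so lies in $\mathcal{S}'(\mathbb{R})$.

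Next I apply the standard convolution theorem $\mathcal{F}(T * \psi) = \widehat{T}\,\widehat{\psi}$, valid for $T \in \mathcal{S}'(\mathbb{R})$ and $\psi \in \mathcal{S}(\mathbb{R})$, where the product is a tempered distribution multiplied by a Schwartz function. This gives $\mathcal{F}_b h = \widehat{\widetilde\sigma}\,\widehat{\psi_w}$. Two identifications remain.

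\emph{Fourier slice theorem.} I need $\widehat{\psi_w}(\rho) = \widehat f(\rho w)$. This follows from Fubini: decompose $x = s w + y$ with $y \perp w$, so that
\begin{equation*}
\int_{\mathbb{R}} \psi_w(s) e^{-i\rho s}\,ds = \int_{\mathbb{R}^d} f(x) e^{-i\langle \rho w, x\rangle}\,dx.
\end{equation*}
This is legitimate because $f \in \mathcal{S}(\mathbb{R}^d)$ is absolutely integrable.

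\emph{Reflection and conjugation.} From the duality definition of $\mathcal{F}$ one checks that $\widehat{\widetilde\sigma} = \widetilde{\widehat\sigma}$, that is, $\widehat{\widetilde\sigma}(\rho) = \widehat\sigma(-\rho)$.

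The main subtlety is the last step, converting $\widehat\sigma(-\rho)$ into $\overline{\widehat\sigma(\rho)}$. This holds precisely when $\sigma$ is real, meaning $\overline{\sigma} = \sigma$, where conjugation of a distribution is defined by $\langle \overline T, \phi\rangle = \overline{\langle T, \overline\phi\rangle}$. For such $\sigma$, a direct duality computation shows $\overline{\widehat\sigma} = \widetilde{\widehat{\overline\sigma}} = \widetilde{\widehat\sigma}$. I will state explicitly that activations are taken real-valued, which is implicit for ReLU, sigmoid and sign. If one prefers to avoid this hypothesis, the formula should read $\widehat f(\rho w)\,\widehat\sigma(-\rho)$.

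I also need to check that every product above is a well-defined element of $\mathcal{S}'(\mathbb{R})$. Since $\rho \mapsto \widehat f(\rho w)$ belongs to $\mathcal{S}(\mathbb{R})$, multiplying the tempered distribution $\overline{\widehat\sigma}$ by it is legitimate. This completes the chain $\mathcal{F}_b\{\mathcal{R}_\sigma f(w,\cdot)\} = \widehat f(\rho w)\,\overline{\widehat\sigma(\rho)}$ in $\mathcal{S}'(\mathbb{R})$.
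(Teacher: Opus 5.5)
Your proof is correct, but it takes a more structural route than the paper. The paper never names the convolution. It pairs $\mathcal{F}_b\{\mathcal{R}_\sigma f(w,\cdot)\}$ with a test function $\phi$ and bounds the Schwartz seminorms of $t\mapsto\psi_w(t+b)\widehat\phi(b)$ by an integrable function of $b$. This lets Theorem~\ref{th:inversion_integrale_crochet} move the $b$-integral inside the bracket with $\sigma$. Fubini and the slice identity then identify the inner function as $\widehat\eta$ with $\eta(v)=\phi(-v)\widehat f(-vw)$, and the definition of $\widehat\sigma$ plus a change of variables give $\widehat f(\rho w)\,\widehat\sigma(-\rho)$. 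In effect the paper reproves by hand, in this special case, the identity $\mathcal{F}(\widetilde\sigma*\psi_w)=\widehat{\widetilde\sigma}\,\widehat{\psi_w}$ that you simply quote.

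Your version is shorter and makes explicit which standard facts are used: the $\mathcal{S}'*\mathcal{S}$ convolution theorem, the slice theorem, and the reflection/conjugation rules for $\mathcal{F}$. As a by-product it shows that $b\mapsto\mathcal{R}_\sigma f(w,b)$ is smooth with all derivatives of polynomial growth, which contains Remark~\ref{rm:espece_arrive_R_sigma}. The paper's version stays within its own appendix and elementary Fubini arguments, at the price of a Bochner-integrability check in $\mathcal{S}(\mathbb{R})$.

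Both proofs need $\sigma$ real-valued to replace $\widehat\sigma(-\rho)$ by $\overline{\widehat\sigma(\rho)}$ at the end. The paper uses this only in passing, although the proposition is stated for arbitrary $\sigma\in\mathcal{S}'(\mathbb{R})$. Your explicit hypothesis, together with the formula $\widehat f(\rho w)\,\widehat\sigma(-\rho)$ for complex $\sigma$, is therefore a genuine sharpening of the statement.
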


\begin{proof}
By definition of the Fourier transform on tempered distributions, the action of the spectrum on a test function $\phi \in \mathcal{S}(\mathbb{R})$ shifts to the spatial domain:
\begin{equation*}
    \langle \mathcal{F}_b\{\mathcal{R}_\sigma f(w, \cdot)\}, \phi \rangle_{\mathbb{R}} = \langle \mathcal{R}_\sigma f(w, \cdot), \widehat{\phi} \rangle_{\mathbb{R}}.
\end{equation*}

According to \eqref{eq:Rsigmaf_form}:
\begin{equation*}
    \mathcal{R}_\sigma f(w, b) = \langle \sigma(t), \psi_w(t+b) \rangle_t,
\end{equation*}
where $\psi_w(s) = \int_{\langle w, x \rangle = s} f(x) \, d\mu_s(x) \in \mathcal{S}(\mathbb{R})$. Since $\mathcal{R}_\sigma f(w, \cdot)$ is regular and has at most polynomial growth, its outer action on $\widehat{\phi} \in \mathcal{S}(\mathbb{R})$ is given by a regular Lebesgue integral over $b$:
\begin{equation*}
    \langle \mathcal{R}_\sigma f(w, \cdot), \widehat{\phi} \rangle_{\mathbb{R}} = \int_{\mathbb{R}} \langle \sigma(t), \psi_w(t+b) \rangle_t \widehat{\phi}(b) \, db.
\end{equation*}

 To rigorously justify that the mapping $t \mapsto \psi_w(t+b)\widehat{\phi}(b)$ belongs to $\mathcal{S}(\mathbb{R})$ uniformly in $b$, we must bound its Schwartz semi-norms independently of the shift parameter. For any integers $k, m \ge 0$, the corresponding semi-norm evaluates to $\sup_{t \in \mathbb{R}} |t^k \psi_w^{(m)}(t+b) \widehat{\phi}(b)|$. Applying the elementary inequality $|t|^k = |(t+b) - b|^k \le 2^k (|t+b|^k + |b|^k)$, we obtain the pointwise bound:
\begin{equation*}
    |t^k \psi_w^{(m)}(t+b) \widehat{\phi}(b)| \le 2^k \left( |t+b|^k |\psi_w^{(m)}(t+b)| |\widehat{\phi}(b)| + |\psi_w^{(m)}(t+b)| |b|^k |\widehat{\phi}(b)| \right).
\end{equation*}
Because $\psi_w$ resides in $\mathcal{S}(\mathbb{R})$, its spatial growth and derivatives are globally bounded. Specifically, there exist finite constants $M_1, M_2 > 0$ such that $|s|^k |\psi_w^{(m)}(s)| \le M_1$ and $|\psi_w^{(m)}(s)| \le M_2$ for all $s \in \mathbb{R}$. Consequently, the semi-norm is bounded pointwise by $2^k (M_1 |\widehat{\phi}(b)| + M_2 |b|^k |\widehat{\phi}(b)|)$. Since the test function $\widehat{\phi}$ also belongs to $\mathcal{S}(\mathbb{R})$, this bounding function has rapid decay and is strictly integrable over $\mathbb{R}$. 

The theorem of integration of distributions depending on a parameter (Theorem \ref{th:inversion_integrale_crochet}) allows us to strictly commute the integral and the duality bracket:
\begin{equation*}
    \int_{\mathbb{R}} \langle \sigma(t), \psi_w(t+b) \rangle_t \widehat{\phi}(b) \, db = \left\langle \sigma(t), \int_{\mathbb{R}} \psi_w(t+b) \widehat{\phi}(b) \, db \right\rangle_t.
\end{equation*}

To analyze the inner test function, let us perform the change of variables $s = t+b$ (thus $db = ds$):
\begin{equation*}
    u(t) := \int_{\mathbb{R}} \psi_w(s) \widehat{\phi}(s-t) \, ds.
\end{equation*}
By expanding the Fourier transform $\widehat{\phi}(s-t) = \int_{\mathbb{R}} \phi(\rho) e^{-i \rho (s-t)} \, d\rho$ and applying standard Fubini's theorem on $\mathcal{S}(\mathbb{R})$, we reformulate $u(t)$:
\begin{align*}
    u(t) &= \int_{\mathbb{R}} \psi_w(s) \left( \int_{\mathbb{R}} \phi(\rho) e^{-i \rho s} e^{i \rho t} \, d\rho \right) ds \\
    &= \int_{\mathbb{R}} \phi(\rho) \left( \int_{\mathbb{R}} \psi_w(s) e^{-i \rho s} \, ds \right) e^{i \rho t} \, d\rho.
\end{align*}

The inner integral over $s$ is exactly the 1D Fourier transform of $\psi_w$. By the classical slice integration theorem, this coincides with the spectrum of the target function evaluated radially: $\widehat{\psi_w}(\rho) = \widehat{f}(\rho w)$. Thus:
\begin{equation*}
    u(t) = \int_{\mathbb{R}} \phi(\rho) \widehat{f}(\rho w) e^{i \rho t} \, d\rho.
\end{equation*}

To identify this expression as a standard forward Fourier transform $\widehat{\eta}(t) = \int_{\mathbb{R}} \eta(v) e^{-i v t} \, dv$, we perform the change of variables $v = -\rho$ (with $d\rho = dv$ in the absolute integration bounds):
\begin{equation*}
    u(t) = \int_{\mathbb{R}} \phi(-v) \widehat{f}(-v w) e^{-i v t} \, dv = \widehat{\eta}(t),
\end{equation*}
where the constructed test function is $\eta(v) = \phi(-v) \widehat{f}(-v w) \in \mathcal{S}(\mathbb{R})$. We can now apply the strict duality definition of the Fourier transform on distributions:
\begin{equation*}
    \langle \sigma(t), u(t) \rangle_t = \langle \sigma(t), \widehat{\eta}(t)  \rangle_t = \langle \widehat{\sigma}(v), \phi(-v) \widehat{f}(-v w) \rangle_v.
\end{equation*}

By a final symmetric change of variable $v = -\rho$ within the distributional bracket, and exploiting the Hermitian symmetry of the real-valued distribution $\sigma$ (implying $\widehat{\sigma}(-\rho) = \overline{\widehat{\sigma}(\rho)}$):
\begin{equation*}
    \langle \widehat{\sigma}(-\rho), \phi(\rho) \widehat{f}(\rho w) \rangle_\rho = \left\langle \overline{\widehat{\sigma}(\rho)} \widehat{f}(\rho w), \phi(\rho) \right\rangle_\rho.
\end{equation*}

The global equality $\langle \mathcal{F}_b\{\mathcal{R}_\sigma f(w, \cdot)\}, \phi \rangle_{\mathbb{R}} = \langle \widehat{f}(\rho w) \overline{\widehat{\sigma}(\rho)}, \phi(\rho) \rangle_\rho$ holds for any test function $\phi \in \mathcal{S}(\mathbb{R})$, which rigorously identifies the spectral density almost everywhere:
\begin{equation}
    \mathcal{F}_b\{\mathcal{R}_\sigma f(w, \cdot)\}(\rho) = \widehat{f}(\rho w) \overline{\widehat{\sigma}(\rho)}.
\end{equation}
\end{proof}
\section{Determination of the Dual (Adjoint) Operator by Duality}\label{sec:adjoint_radon}
 We now construct the adjoint operator of $\mathcal{R}_\sigma$ rigorously. 
\begin{proposition}[Dual Synthesis Operator $\mathcal{R}_\sigma^*$]
The adjoint operator $\mathcal{R}_\sigma^* : \mathcal{S}(\mathbb{S}^{d-1} \times \mathbb{R}) \to \mathcal{S}'(\mathbb{R}^d)$, defined abstractly by the duality identity:
\begin{equation}\label{eq:def_duality_R_sigma}
    \langle \mathcal{R}_\sigma f, h \rangle_{\mathbb{S}^{d-1} \times \mathbb{R}} = \langle \mathcal{R}_\sigma^* h, f\rangle_{\mathbb{R}^d} \quad \forall f \in \mathcal{S}(\mathbb{R}^d), \forall h \in \mathcal{S}(\mathbb{S}^{d-1} \times \mathbb{R}),
\end{equation}
is given point-wise by the $C^\infty$ function with polynomial growth:
\begin{equation}\label{eq:def_Rstarsigma}
    \mathcal{R}_\sigma^* h(x) = \int_{\mathbb{S}^{d-1}} \langle \sigma(t), h(w, \langle w, x \rangle - t) \rangle_t \, \underline{dw}.
\end{equation}
\end{proposition}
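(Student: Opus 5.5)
We compute the left-hand side of \eqref{eq:def_duality_R_sigma} explicitly and recognize the right-hand side. Fix $f \in \mathcal{S}(\mathbb{R}^d)$ and $h \in \mathcal{S}(\mathbb{S}^{d-1}\times\mathbb{R})$. The bracket on the cylinder is the integral against $\underline{dw}\,db$. By Remark \ref{rm:espece_arrive_R_sigma}, $\mathcal{R}_\sigma f(w,\cdot)$ is smooth with polynomial growth, so this integral makes sense. Using \eqref{eq:Rsigmaf_form}, we write
\begin{equation*}
\langle \mathcal{R}_\sigma f, h\rangle = \int_{\mathbb{S}^{d-1}} \int_{\mathbb{R}} \langle \sigma(t), \psi_w(t+b)\rangle_t \, h(w,b)\, db\, \underline{dw},
\end{equation*}
where $\psi_w(s)=\int_{\langle w,x\rangle = s} f\, d\mu_s$.

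\textbf{Step 1: commute the $b$-integral with the bracket.} For fixed $w$, we repeat the argument of Proposition \ref{prop:spectral_representation} with $\widehat{\phi}$ replaced by $h(w,\cdot)$. The Schwartz seminorms of $t\mapsto \psi_w(t+b)h(w,b)$ are bounded by $2^k(M_1|h(w,b)|+M_2|b|^k|h(w,b)|)$, which is integrable in $b$. Theorem \ref{th:inversion_integrale_crochet} then gives
\begin{equation*}
\int_{\mathbb{R}}\langle\sigma(t),\psi_w(t+b)\rangle_t\, h(w,b)\,db=\Big\langle \sigma(t), \int_{\mathbb{R}} \psi_w(t+b)h(w,b)\,db\Big\rangle_t .
\end{equation*}
We then unfold the inner function using the definition of $\psi_w$ and the coarea decomposition $dx = d\mu_s\, ds$ along the slices $\langle w,x\rangle=s$. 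Setting $s=t+b$, this yields
\begin{equation*}
\int_{\mathbb{R}}\psi_w(s)h(w,s-t)\,ds=\int_{\mathbb{R}^d} f(x)\,h(w,\langle w,x\rangle - t)\,dx =: U_w(t).
\end{equation*}

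\textbf{Step 2: commute the bracket in $t$ with the $x$-integral.} We need
\begin{equation*}
\langle \sigma(t), U_w(t)\rangle_t=\int_{\mathbb{R}^d} f(x)\,\langle\sigma(t),h(w,\langle w,x\rangle-t)\rangle_t\,dx .
\end{equation*}
We apply Theorem \ref{th:inversion_integrale_crochet} again, now with parameter $x$ and weight $f(x)$. The function $t\mapsto h(w,\langle w,x\rangle - t)$ is a shifted Schwartz function. Its seminorms grow at most like $(1+|x|)^k$ by the same inequality $|t|^k\le 2^k(|t-s|^k+|s|^k)$ with $s=\langle w,x\rangle$. This growth is killed by the rapid decay of $f$.

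\textbf{Step 3: integrate over the sphere and swap with the $x$-integral.} Define
\begin{equation*}
H(w,x)=\langle\sigma(t),h(w,\langle w,x\rangle-t)\rangle_t .
\end{equation*}
By the structure theorem \ref{th:structure_distrib}, write $\sigma=D^mF$ with $|F(t)|\le C(1+|t|)^k$. Then $H(w,x)=(-1)^m\int F(t)\,\partial_t^m[h(w,\langle w,x\rangle-t)]\,dt$. After the shift $t\mapsto \langle w,x\rangle - t$, this gives $|H(w,x)|\le C(1+|x|)^k$ uniformly in $w$, using compactness of $\mathbb{S}^{d-1}$ and uniform Schwartz bounds on $h$. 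The same representation with differentiation under the integral shows $x\mapsto H(w,x)$ is $C^\infty$ with derivatives of polynomial growth, uniformly in $w$, and that $H$ is jointly measurable, indeed continuous. Fubini on $\mathbb{S}^{d-1}\times\mathbb{R}^d$ is then justified against the integrable majorant $|f(x)|C(1+|x|)^k$. It yields $\langle\mathcal{R}_\sigma f,h\rangle=\int_{\mathbb{R}^d} f(x)\int_{\mathbb{S}^{d-1}}H(w,x)\,\underline{dw}\,dx$, which is \eqref{eq:def_Rstarsigma}. Smoothness and polynomial growth of $\mathcal{R}^*_\sigma h$ follow by differentiating under $\int \underline{dw}$ with these uniform bounds.

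The main obstacle is the bookkeeping for the two applications of Theorem \ref{th:inversion_integrale_crochet}: obtaining seminorm bounds that are integrable in the parameter and uniform in $w$. Routing everything through the representation $\sigma = D^m F$ is the cleanest fix. It turns each bracket into an absolutely convergent Lebesgue integral, so all interchanges reduce to classical Fubini on $\mathbb{S}^{d-1}\times\mathbb{R}\times\mathbb{R}^d$.
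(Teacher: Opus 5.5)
Your proof is correct and follows the paper's argument essentially step for step, only traversed in the opposite direction. The paper starts from $\langle \mathcal{R}_\sigma^* h, f\rangle$ and applies Fubini on $\mathbb{S}^{d-1}\times\mathbb{R}^d$, then Theorem \ref{th:inversion_integrale_crochet} in $x$, then the slice decomposition, then Theorem \ref{th:inversion_integrale_crochet} in $b$; you run the same chain starting from $\langle \mathcal{R}_\sigma f, h\rangle$, and you also supply the $b$-integrable seminorm bound (borrowed from Proposition \ref{prop:spectral_representation}) that the paper leaves to the reader.
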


\begin{proof}
Using the same arguments based on the structure theorem \ref{th:structure_distrib} as in the proof related to Remark  \ref{rm:espece_arrive_R_sigma}, we can show that the integral in equation \eqref{eq:def_Rstarsigma} defines a $C^\infty$ function with at most polynomial growth. In particular, it defines a tempered distribution.

Next, we prove that the operator defined by \eqref{eq:def_Rstarsigma} satisfies the duality property \eqref{eq:def_duality_R_sigma}. To do so, let $f \in \mathcal{S}(\mathbb{R}^d)$ and $h \in \mathcal{S}(\mathbb{S}^{d-1} \times \mathbb{R})$ be a test function. As $\mathcal{R}_\sigma^* h$ is a $C^\infty$ function with at most polynomial growth, we can replace the duality brackets by integrals:
\begin{equation*}
    \langle \mathcal{R}_\sigma^* h, f \rangle_{\mathbb{R}^d} = \int_{\mathbb{R}^d} f(x) \left( \int_{\mathbb{S}^{d-1}} \langle \sigma(t), h(w, \langle w, x \rangle - t) \rangle_t \, \underline{dw} \right) dx.
\end{equation*}

To rigorously justify the commutation of the integrals over $x$ and $w$ with the distributional action of $\sigma \in \mathcal{S}'(\mathbb{R})$, we first observe that $\mathcal{R}_\sigma^* h$ is a continuous function with at most polynomial growth, guaranteeing that the outer integral over $x$ is absolutely convergent. By Fubini's theorem, we can legitimately swap the order of the classical integrals over $\mathbb{R}^d$ and $\mathbb{S}^{d-1}$:
\begin{equation*}
    \langle \mathcal{R}_\sigma^* h, f \rangle_{\mathbb{R}^d} = \int_{\mathbb{S}^{d-1}} \left( \int_{\mathbb{R}^d} f(x) \langle \sigma(t), h(w, \langle w, x \rangle - t) \rangle_t \, dx \right) \underline{dw}.
\end{equation*}

We now apply the theorem of integration of distributions depending on parameters (Theorem \ref{th:inversion_integrale_crochet}) to pull the distributional bracket outside the integral over $x$. To do so, we must prove that the function $x \mapsto f(x) h(w, \langle w, x \rangle - t)$ is Bochner-integrable with respect to the topology of $\mathcal{S}(\mathbb{R})$. Let us bound its Schwartz semi-norms in the variable $t$. For any integers $k, m \ge 0$, we evaluate:
\begin{equation*}
    N_{k,m}(x, w) := \sup_{t \in \mathbb{R}} \left| t^k \frac{\partial^m}{\partial t^m} \Big( f(x) h(w, \langle w, x \rangle - t) \Big) \right| = |f(x)| \sup_{t \in \mathbb{R}} \left| t^k (\partial_s^m h)(w, \langle w, x \rangle - t) \right|.
\end{equation*}

Using the algebraic inequality $|t|^k \le 2^k ( |t - \langle w, x \rangle|^k + |\langle w, x \rangle|^k ) \le 2^k ( |t - \langle w, x \rangle|^k + |x|^k )$, we obtain the upper bound:
\begin{equation*}
    N_{k,m}(x, w) \le 2^k |f(x)| \left( \sup_{s \in \mathbb{R}} |s^k \partial_s^m h(w, s)| + |x|^k \sup_{s \in \mathbb{R}} |\partial_s^m h(w, s)| \right).
\end{equation*}

Since $h \in \mathcal{S}(\mathbb{S}^{d-1} \times \mathbb{R})$, its partial derivatives and their polynomial weightings are globally bounded by finite constants independently of the orientation $w$. Thus, there exists a constant $C_{k,m} > 0$ such that:
\begin{equation*}
    N_{k,m}(x, w) \le C_{k,m} |f(x)| (1 + |x|^k).
\end{equation*}

Because the test function $f$ belongs to $\mathcal{S}(\mathbb{R}^d)$, the bounding function $x \mapsto C_{k,m} |f(x)| (1 + |x|^k)$ is rapidly decaying and strictly integrable over $\mathbb{R}^d$. This confirms that the integral of the semi-norms over the spatial domain is finite. Consequently, Theorem \ref{th:inversion_integrale_crochet} rigorously applies, allowing us to safely commute the spatial integral with the distributional action of $\sigma$:

\begin{equation}\label{eq:f_Rstarsigma_h}
    \langle f, \mathcal{R}_\sigma^* h \rangle_{\mathbb{R}^d} = \int_{\mathbb{S}^{d-1}} \left\langle \sigma(t), \int_{\mathbb{R}^d} f(x) h(w, \langle w, x \rangle - t) \, dx \right\rangle_t \, \underline{dw}.
\end{equation}

To connect this bracket with the spatial pullback $\phi_{w,b}^* \sigma$, we must express the inner spatial integral using integration over the affine fibers. Let us group the points $x \in \mathbb{R}^d$ by their projection along the vector $w$, by setting $s = \langle w, x \rangle$:
\begin{align*}
    \int_{\mathbb{R}^d} f(x) h(w, \langle w, x \rangle - t) \, dx &= \int_{\mathbb{R}} \left( \int_{\langle w, x \rangle = s} f(x) \, d\mu_s(x) \right) h(w, s - t) \, ds \\
    &= \int_{\mathbb{R}} \left( \int_{\langle w, x \rangle = t+b} f(x) \, d\mu_{t+b}(x) \right) h(w, b) \, db.
\end{align*}
Applying again Theorem \ref{th:inversion_integrale_crochet} (for which we leave the proof of the uniform integrability to the reader) allows us to pull the integral over $b$ outside the bracket:
\begin{equation*}
    \left\langle \sigma(t), \int_{\mathbb{R}^d} f(x) h(w, \langle w, x \rangle - t) \, dx \right\rangle_t = \int_{\mathbb{R}} \left\langle \sigma(t), \int_{\langle w, x \rangle - b = t} f(x) \, d\mu_{t+b}(x) \right\rangle_t h(w, b) \, db.
\end{equation*}
By definition of the pullback distribution: 
\begin{equation*}
    \left\langle \sigma(t), \int_{\langle w, x \rangle - b = t} f(x) \, d\mu_{t+b}(x) \right\rangle_t = \langle \phi_{w,b}^* \sigma, f \rangle_{\mathbb{R}^d}.
\end{equation*}
By substituting this into \eqref{eq:f_Rstarsigma_h}:
\begin{align*}
    \langle f, \mathcal{R}_\sigma^* h \rangle_{\mathbb{R}^d} &= \int_{\mathbb{S}^{d-1}} \left( \int_{\mathbb{R}} \langle \phi_{w,b}^* \sigma, f \rangle_{\mathbb{R}^d} \, h(w, b) \, db \right) \underline{dw} \\
    &= \int_{\mathbb{S}^{d-1}} \int_{\mathbb{R}} \mathcal{R}_\sigma f(w, b) h(w, b) \, db \, \underline{dw} \\
    &= \langle \mathcal{R}_\sigma f, h \rangle_{\mathbb{S}^{d-1} \times \mathbb{R}}.
\end{align*}
The duality identity is thus rigorously satisfied on the Schwartz space, completing the proof.

\end{proof}

\section{Characterization of Barron spaces via the generalized Radon transform}\label{sec:caracterization_barron_distrib}

Now, we introduce the notion of the Generalized Inversion Filter.
\begin{definition}[Generalized Inversion Filter $\mathcal{T}_g$]
We define the generalized inversion filter $\mathcal{T}_g$ (operating on the scalar bias variable $b$) formally by its Fourier transform in the spectral domain:
\begin{equation}
    \widehat{\mathcal{T}_g}(\rho) = \text{FV} \left( \frac{|\mathbb{S}^{d-1}|}{2(2\pi)^{d-1}} \frac{|\rho|^{d-1}}{|\widehat{g}(\rho)|^2} \right)
\end{equation}
in the sense of distributions. The notation $\text{FV}$ designates the finite value in the sense of Hadamard.
\end{definition}

The following proposition allows us to give necessary and sufficient conditions for this filter to indeed be a tempered distribution.
\begin{proposition}[Conditions for the existence of $\mathcal{T}_g$ in $\mathcal{S}'(\mathbb{R})$]\label{prop:existence_Tg}
The generalized inversion filter $\mathcal{T}_g$ is a well-defined tempered distribution, i.e., $\mathcal{T}_g \in \mathcal{S}'(\mathbb{R})$, if and only if the spectrum of the activation $\widehat{g}$ (assumed to be a measurable function) satisfies the following three conditions:
\begin{enumerate}
    \item \textbf{Frequency completeness:} $\widehat{g}(\rho) \neq 0$ for almost all $\rho \in \mathbb{R}$.
\item \textbf{Moderate decay at high frequency:} There exist constants $C > 0$ and $N > 0$ such that, as $|\rho|
\to \infty$:
    \begin{equation}\label{eq:g_polynomial_decay_infty}
        |\widehat{g}(\rho)| \ge C |\rho|^{-N}.
\end{equation}
    
\item \textbf{Tempered singularities:} The local quotient function $S(\rho) = \frac{|\rho|^{d-1}}{|\widehat{g}(\rho)|^2}$ possesses at most isolated singularities. Crucially, the algebraic order of these singularities must be \textbf{uniformly bounded} by a finite integer across $\mathbb{R}$, and the local coefficients of their divergence must exhibit at most polynomial growth as $|\rho| \to \infty$.
\end{enumerate}
\end{proposition}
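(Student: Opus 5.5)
We prove the two directions separately. Throughout, $S(\rho) = |\rho|^{d-1}/|\widehat{g}(\rho)|^2$ is a nonnegative measurable function on $\mathbb{R}$, and $\widehat{\mathcal{T}_g}$ is defined as the Hadamard finite value of the constant multiple $c_d S$. Being tempered is unaffected by the Fourier transform, so it suffices to show $\widehat{\mathcal{T}_g}\in\mathcal{S}'(\mathbb{R})$.

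\textbf{Sufficiency.} Assume the three conditions hold.

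First, frequency completeness ensures that $S$ is finite almost everywhere. Without it, $S=+\infty$ on a set of positive measure, and no finite-part procedure applies there.

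Next, write $\{\rho_j\}$ for the isolated singularities of $S$. Around each $\rho_j$, choose a small interval $I_j$, with the intervals pairwise disjoint. Condition 3 says the singular order at $\rho_j$ is at most a fixed $K$ and the divergence coefficients $a_{j,k}$ satisfy $|a_{j,k}|\le C(1+|\rho_j|)^M$. So on $I_j$ we can write $S$ as a principal part $\sum_{k\le K} a_{j,k}|\rho-\rho_j|^{-\lambda_k}$ (possibly with logarithmic factors) plus a locally integrable remainder. We then define
\[
\langle \widehat{\mathcal{T}_g},\phi\rangle=\int_{\mathbb{R}\setminus\cup I_j} c_d S\phi \;+\;\sum_j \mathrm{FV}\!\int_{I_j} c_d S\phi .
\]
In each $I_j$ we subtract the Taylor polynomial of $\phi$ at $\rho_j$ of order $K$, which is the standard Hadamard regularisation. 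This gives
\[
\Big|\mathrm{FV}\!\int_{I_j}\Big|\le C(1+|\rho_j|)^M\sup_{I_j}\sum_{k\le K+1}|\phi^{(k)}| .
\]

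To control the full sum we need the singular points not to accumulate too densely, with $\#\{j: |\rho_j|\le R\}$ growing at most polynomially in $R$. We read this as part of the "tempered" hypothesis. The sum is then bounded by a Schwartz seminorm $\sup(1+|\rho|)^{M+2}\sum_{k\le K+1}|\phi^{(k)}|$.

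Away from the singularities, condition 2 gives $S(\rho)\le C^{-2}|\rho|^{d-1+2N}$ for large $|\rho|$. The regular integral is therefore bounded by a seminorm $\sup(1+|\rho|)^{d+1+2N}|\phi|$. Combining the two estimates yields continuity on $\mathcal{S}$, so $\widehat{\mathcal{T}_g}\in\mathcal{S}'(\mathbb{R})$.

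\textbf{Necessity.} Assume $\widehat{\mathcal{T}_g}\in\mathcal{S}'$. Then it has finite order $K_0$, and there is $M_0$ with $|\langle \widehat{\mathcal{T}_g},\phi\rangle|\le C\sup_{k\le K_0}\|(1+|\rho|)^{M_0}\phi^{(k)}\|_\infty$.

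\emph{Condition 1.} If $\widehat g=0$ on a set of positive measure, then $S=\infty$ there and the finite value is undefined. So condition 1 holds.

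\emph{Condition 2.} Test against nonnegative bumps $\phi_R$ of unit size supported in regular regions near $|\rho|\sim R$. There the finite part reduces to the plain integral of $S\ge 0$, so the seminorm estimate gives $\int S\phi_R\lesssim R^{M_0}$. This bounds the local averages of $S$ by a polynomial, and hence bounds $|\widehat g|^{-2}$ from above in averaged form. To turn this into the pointwise lower bound \eqref{eq:g_polynomial_decay_infty}, we need some regularity of $\widehat g$ (for instance continuity away from singularities), which we make explicit as an interpretation of "measurable spectrum compatible with the finite-value construction".

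\emph{Condition 3.} This comes from the finite order: a Hadamard finite part at $\rho_j$ of order $\lambda$ needs derivatives of order about $\lambda$ of $\phi$. So $K_0$ bounds all singular orders uniformly. Testing with bumps localised at $\rho_j$ then gives $|a_{j,k}|\lesssim(1+|\rho_j|)^{M_0}$.

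\textbf{Main obstacle.} The delicate point is that the statement is only partly rigorous. "Finite value" is only meaningful when $S$ has isolated algebraic or logarithmic singularities, so in the necessity direction the isolatedness of singularities is essentially built into the definition rather than derived. Likewise, the pointwise bound in condition 2 needs a regularity assumption on $\widehat g$. I would state these interpretive conventions explicitly at the start and then carry out the seminorm estimates above.
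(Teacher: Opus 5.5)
Your sufficiency direction follows the paper's route and is more careful. The paper's proof consists only of this direction: condition 1 makes $S$ defined a.e., condition 2 gives polynomial growth, and condition 3 lets the finite value subtract Taylor polynomials of uniformly bounded degree, so finitely many seminorms control the functional. The paper never addresses necessity. Two remarks on your version:
\begin{itemize}
\item Read literally, condition 2 gives $|\widehat g(\rho)|\ge C|\rho|^{-N}>0$ for all $|\rho|\ge R_0$. So the singular set of $S$ is a discrete closed subset of $[-R_0,R_0]$, hence finite, and your counting hypothesis is unnecessary.
\item Under your weaker reading (the bound holds only away from singularities), a polynomial count is not enough. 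The finite part over an interval of radius $\varepsilon_j$ around a pole of order $\lambda$ contains terms of size $\varepsilon_j^{\,k+1-\lambda}|\phi^{(k)}(\rho_j)|$ with $k+1<\lambda$. Your bound on $\mathrm{FV}\int_{I_j}$ therefore tacitly needs a uniform lower bound on the radii and uniform $L^1$ control of the remainders.
\end{itemize}

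The genuine gap is the necessity of condition 2. You propose to upgrade the averaged bound $\int S\phi_R\lesssim R^{M_0}$ to the pointwise bound by assuming continuity of $\widehat g$. This fails, and the implication is false even for $C^\infty$ spectra. Take $\chi\in C_c^\infty(-1,1)$ with $0\le\chi\le1$ and $\chi(0)=1$, and set $\widehat g(\rho)=1-\sum_{n\ge1}(1-e^{-n})\,\chi\bigl(e^{3n}(\rho-n)\bigr)$. This $\widehat g$ is smooth, equals $1$ off the disjoint intervals $|\rho-n|<e^{-3n}$, and satisfies $\widehat g\ge e^{-n}$ on the $n$-th one. So $S$ is continuous, conditions 1 and 3 hold trivially, and
\[
\int_{\mathbb{R}} S|\phi|\,d\rho\le\int_{\mathbb{R}}|\rho|^{d-1}|\phi|\,d\rho+2\|\phi\|_\infty\sum_{n\ge1}(n+1)^{d-1}e^{-n}\le C\sup_{\rho}(1+|\rho|)^{d+1}|\phi(\rho)|.
\]
Hence $\widehat{\mathcal T_g}=c_dS\in\mathcal S'(\mathbb R)$, yet $\widehat g(n)=e^{-n}$ violates condition 2.

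Zeros at large $|\rho|$, which condition 3 explicitly anticipates, also break condition 2. Take $\widehat g(\rho)=\sin(\pi\rho)$. It produces double poles at the nonzero integers, with coefficients of order $|n|^{d-1}$ and bounded remainders on intervals of fixed length, so the finite part is tempered. Yet condition 2 fails at every integer. The paper's own triangle example, with zeros at $2\pi k$, has the same defect.

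Temperedness gives only your averaged bound. Recovering a pointwise bound needs quantitative control of $\widehat g$, e.g.\ $|\widehat g'(\rho)|\lesssim(1+|\rho|)^q$, and even then it holds only at a fixed distance from the zeros of $\widehat g$. Your coefficient extraction for condition 3 has a similar hole. Bumps localised at $\rho_j$ have uniformly bounded $C^{K_0}$ norms only if the $\rho_j$ are uniformly separated, and nothing provides that. The ``only if'' part should therefore be reported as false for condition 2, or replaced by an integral condition, rather than derived under an interpretive convention.
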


\begin{proof}
The operator $\mathcal{T}_g$ belongs to $\mathcal{S}'(\mathbb{R})$ if and only if its Fourier transform, defined by the quotient $S(\rho) = \frac{|\mathbb{S}^{d-1}|}{2(2\pi)^{d-1}} \frac{|\rho|^{d-1}}{|\widehat{g}(\rho)|^2}$, defines a valid tempered distribution.

By Condition 1, $S(\rho)$ is defined almost everywhere. Condition 2 guarantees that $S(\rho)$ exhibits at most polynomial growth at infinity, which is the standard sufficient condition to generate a continuous linear functional on $\mathcal{S}(\mathbb{R})$ away from local singularities \cite[Chap. VII]{schwartz1966theorie}, \cite[Chap. 3]{zemanian1987distribution}.

To regularize local non-integrabilities, Condition 3 ensures the singularities are isolated and of purely algebraic order. Crucially, because this algebraic order is \textit{uniformly bounded} across $\mathbb{R}$, the Hadamard finite value ($\text{FV}$) extension evaluates Taylor polynomials up to a strictly fixed maximum degree. This guarantees the resulting functional is controlled by a finite number of Schwartz semi-norms, fulfilling the essential topological prerequisite for $\mathcal{S}'(\mathbb{R})$ \cite[Chap. I]{gelfand1964generalized}. 

Since $\widehat{\mathcal{T}_g} \in \mathcal{S}'(\mathbb{R})$, the topological isomorphism of the Fourier transform immediately yields $\mathcal{T}_g \in \mathcal{S}'(\mathbb{R})$.
\end{proof}

For the rest of the article, we sum up all the necessary hypotheses on the activation $\sigma$ that we assume to hold so that $\mathcal{T}_g$ is a well-defined tempered distribution.
\begin{hypothesis}[Admissible Activation Functions]\label{hyp:complete_hyp_sigma}
The activation $\sigma \in \mathcal{S}'(\mathbb{R})$ is a $C^0(\mathbb{R})$ function with at most polynomial growth of order $k_\sigma$. Furthermore, we assume there exists a differentiation order $\alpha \ge 0$ such that the derived distribution $g = D^\alpha \sigma$, whose spectrum is given by $\widehat{g}(\rho) = (i\rho)^\alpha \widehat{\sigma}(\rho)$, strictly satisfies the following properties:
\begin{enumerate}
    \item \textbf{Infinite Spectral Regularity:} The regularized spectrum is a smooth function, meaning $\widehat{g} \in C^\infty(\mathbb{R})$.
    \item \textbf{Filter Existence:} $\widehat{g}$ satisfies all three structural conditions required by Proposition \ref{prop:existence_Tg} (Frequency completeness, Moderate decay at high frequency, and Tempered singularities).
\end{enumerate}
\end{hypothesis}

\begin{remark}
It is crucial to note that Hypothesis \ref{hyp:complete_hyp_sigma} is verified for general activation functions used in Deep Learning.

\textbf{1. Functions with intrinsic tempered autocorrelation ($\alpha = 0$):} \\
These activations possess sufficient spatial decay or global integrability, meaning their raw spectrum already satisfies $|\widehat{\sigma}|^2 \in \mathcal{S}'(\mathbb{R})$ without any derivation.
\begin{itemize}
    
    \item \textbf{The Triangle function (B-spline of order 1):} $\sigma(t) = \max(0, 1 - |t|)$ \\
    Its spectrum $\widehat{\sigma}(\rho) = \frac{4 \sin^2(\rho/2)}{\rho^2}$ decays as $1/\rho^2$. Its square is a perfectly valid tempered distribution.

\end{itemize}

\vspace{0.3cm}
\textbf{2. Modern Deep Learning Activations ($\alpha > 0$):} \\
These standard neural network activations do not decay at infinity, which generates severe singularities at the frequency origin. The regularized autocorrelation strictly absorbs these singularities via the distributional derivative $g = D^\alpha \sigma$.
\begin{itemize}
    \item \textbf{The Rectified Linear Unit (ReLU):} $\sigma(t) = \max(0, t)$ \\
    Its raw spectrum contains heavy singularities. However, taking the second distributional derivative ($\alpha = 2$) yields the Dirac delta $g(t) = \delta(t)$. Its spectrum is the constant function $\widehat{g}(\rho) = 1$, yielding a perfect constant power density $|\widehat{g}(\rho)|^2 = 1 \in \mathcal{S}'(\mathbb{R})$.
    
    \item \textbf{The Heaviside Step Function (Binary Networks):} $\sigma(t) = \mathbf{1}_{t \ge 0}$ \\
    A single derivative ($\alpha = 1$) isolates the jump, giving $g(t) = \delta(t)$. Consequently, the regularized spectrum is again $\widehat{g}(\rho) = 1$, ensuring validity.

    \item \textbf{The Leaky ReLU:} $\sigma(t) = \max(\gamma t, t)$ for $\gamma > 0$ \\
    Similar to the standard ReLU, it acts as a piecewise linear function. A second derivative ($\alpha = 2$) yields $g(t) = (1 - \gamma)\delta(t)$. The regularized spectrum is the constant $\widehat{g}(\rho) = 1 - \gamma$, making the squared modulus perfectly defined.

    \item \textbf{The Absolute Value function:} $\sigma(t) = |t|$ \\
    Similar to ReLU, taking the second distributional derivative ($\alpha = 2$) yields $g(t) = 2\delta(t)$. Its spectrum is the constant function $\widehat{g}(\rho) = 2$, which ensures a valid regularized tempered autocorrelation.

    \item \textbf{The Exponential Linear Unit (ELU):} $\sigma(t) = t$ if $t \ge 0$, and $\gamma(e^t - 1)$ if $t < 0$ \\
    Taking the second derivative ($\alpha = 2$) yields $g(t) = (1-\gamma)\delta(t) + \gamma \mathbf{1}_{t<0} e^t$. Its spectrum is $\widehat{g}(\rho) = (1-\gamma) + \frac{\gamma}{1-i\rho}$, which has a rational behavior at high frequencies and tends to a non-zero constant $1-\gamma$ (or decays as $1/\rho$ if $\gamma=1$). This ensures the moderate decay condition is perfectly satisfied.
\end{itemize}

\vspace{0.3cm}
\textbf{3. Activation functions that DO NOT verify the hypothesis:} \\
Conversely, some well-known activation functions fail to satisfy the required spectral conditions.
\begin{itemize}
    \item \textbf{Smooth saturating activations (Sigmoid, Tanh):} \\
    For example, the Sigmoid $\sigma(t) = 1/(1+e^{-t})$ is infinitely differentiable. As a result, its derivatives decay exponentially fast in the frequency domain (e.g., $\widehat{D\sigma}(\rho) = \frac{\pi}{\cosh(\pi \rho)}$). This rapid exponential decay violently violates the moderate decay condition (Condition 2), meaning the generalized inversion filter would require exponential amplification, making it ill-posed in $\mathcal{S}'(\mathbb{R})$.
    
    \item \textbf{Smooth non-saturating activations (SiLU/Swish, GELU, Softplus):} \\
    Similar to the Sigmoid, these functions are smooth ($C^\infty$) everywhere. Thus, their spectra decay faster than any polynomial at infinity. They violate the moderate decay condition (exponential non-smoothness), preventing the existence of a tempered inversion filter.
    
    \item \textbf{Polynomial activations (e.g., $\sigma(t) = t^2$ or $t^3$):} \\
    Taking successive derivatives of a polynomial eventually yields a constant, and then zero. The spectrum of such functions is composed entirely of Dirac deltas and their derivatives, supported only at the frequency origin. This violates the frequency completeness condition (Condition 1), as the spectrum $\widehat{g}(\rho)$ is exactly zero almost everywhere.
\end{itemize}
\end{remark}

\begin{definition}[Filtered Analysis Operator]\label{def:filter}
Let $f \in \mathcal{S}(\mathbb{R}^d)$.
The tempered distribution (parameterized by $w$) formally denoted $ (- \Delta_b)^{\alpha} \mathcal{T}_g *_b \mathcal{R}_\sigma f(w,\cdot)$, is defined rigorously by duality in the Fourier domain.
It corresponds to the unique tempered distribution $H_w \in \mathcal{S}'(\mathbb{R})$ whose 1D spectrum is given by the algebraic product:
\begin{equation}
    \widehat{H_w}(\rho)  := (\rho)^{2\alpha}\widehat{\mathcal{T}_g}(\rho) \cdot \mathcal{F}_b\{\mathcal{R}_\sigma f\}(w, \rho).
\end{equation}
According to \eqref{eq:tfb_Rf}, this algebraic definition reduces to:
\begin{equation}\label{eq:def_hw}
    \widehat{H_w}(\rho) :=  \text{FV} \left( (i\rho)^{\alpha} \frac{|\mathbb{S}^{d-1}|}{2(2\pi)^{d-1}} \frac{|\rho|^{d-1}}{\widehat{g}(\rho)} \widehat{f}(\rho w) \right)  
\end{equation}
which is an element of $\mathcal{S}'(\mathbb{R})$ as $g$ is assumed to verify the conditions of Proposition \ref{prop:existence_Tg}.

\end{definition}

Lemma \ref{lem:spectral_convolution_identity} will be of great use for what follows.
\begin{lemma}[Spectral Convolution Identity]\label{lem:spectral_convolution_identity}
Let $\sigma \in \mathcal{S}'(\mathbb{R})$ be a continuous activation function with at most polynomial growth of order $k_\sigma$. Let $\mu \in \mathcal{M}_p(\mathbb{R})$ be a signed Radon measure with finite weighted total variation of order $p \geq k_\sigma$. 
Let $v(s) := (\mu *_b \sigma)(s) = \int_{\mathbb{R}} \sigma(s-b) d\mu(b)$ be the 1D spatial convolution. Then $v \in \mathcal{S}'(\mathbb{R})$ and its distributional Fourier transform satisfies:
\begin{equation}
    \widehat{v} = \widehat{\mu} \widehat{\sigma}
\end{equation}
in the sense of tempered distributions $\mathcal{S}'(\mathbb{R})$.
\end{lemma}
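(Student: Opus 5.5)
We prove the identity by duality, testing both sides against an arbitrary $\phi \in \mathcal{S}(\mathbb{R})$. Here $\mathcal{M}_p(\mathbb{R})$ denotes signed Radon measures with $\int_{\mathbb{R}} (1+|b|)^p \, d|\mu|(b) < \infty$.

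\textbf{Step 1: $v$ is a tempered distribution.} Since $|\sigma(t)| \le C(1+|t|)^{k_\sigma}$, the inequality $(1+|s-b|)^{k_\sigma} \le (1+|s|)^{k_\sigma}(1+|b|)^{k_\sigma}$ gives
\[
|v(s)| \le C(1+|s|)^{k_\sigma} \int_{\mathbb{R}} (1+|b|)^{k_\sigma} \, d|\mu|(b),
\]
which is finite because $p \ge k_\sigma$. Continuity of $v$ follows from dominated convergence, using the same bound locally uniformly in $s$. So $v$ is continuous with polynomial growth, hence $v \in \mathcal{S}'(\mathbb{R})$.

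\textbf{Step 2: $\widehat{\mu}$ is a legitimate multiplier.} The function $\widehat{\mu}(\rho) = \int e^{-i\rho b}\, d\mu(b)$ is bounded and continuous. Because $p \ge k_\sigma$, we may differentiate under the integral $\lfloor p \rfloor$ times, so $\widehat{\mu} \in C^{\lfloor p\rfloor}$ with bounded derivatives. I will interpret $\widehat{\mu}\widehat{\sigma}$ via the structure theorem. Write $\sigma$ as a continuous polynomially growing function (Hypothesis \ref{hyp:complete_hyp_sigma}). Then $\widehat{\sigma}$ is a finite-order distribution whose order is tied to $k_\sigma$: it is the Fourier transform of $(1+t^2)^{K}\cdot\big(\sigma/(1+t^2)^{K}\big)$ with $2K > k_\sigma+1$, i.e. $(1-\partial_\rho^2)^K$ applied to the transform of an $L^1$ function.

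This is the delicate point, and I expect it to be the main obstacle. Only $C^{k_\sigma}$ regularity of $\widehat{\mu}$ is guaranteed, so the pairing $\langle \widehat{\mu}\widehat{\sigma}, \phi\rangle$ should be \emph{defined} as $\langle \widehat{\sigma}, \widehat{\mu}\phi\rangle$, and this requires $\widehat{\sigma}$ to act on $C^{k}$ test functions with decay. Rather than fight this directly, I will avoid it by computing on the spatial side, as follows.

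\textbf{Step 3: the core computation.} We have $\langle \widehat v,\phi\rangle = \langle v,\widehat\phi\rangle = \int\!\int \sigma(s-b)\widehat\phi(s)\,ds\,d\mu(b)$. Fubini applies because the integrand is dominated by $C(1+|s|)^{k_\sigma}(1+|b|)^{k_\sigma}|\widehat\phi(s)|$, which is integrable with respect to $ds\,d|\mu|$. The inner integral equals $\langle \sigma, \widehat\phi(\cdot+b)\rangle$. Using $\widehat\phi(t+b) = \widehat{\phi e^{-i\rho b}}(t)$, this becomes $\langle\widehat\sigma, e^{-i\rho b}\phi\rangle$. Hence
\[
\langle \widehat v,\phi\rangle = \int_{\mathbb{R}} \langle \widehat\sigma, e^{-i\rho b}\phi\rangle \, d\mu(b).
\]
It remains to commute the $\mu$-integral with the bracket. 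The family $b\mapsto e^{-i\rho b}\phi$ has Schwartz seminorms $N_{k,m}$ bounded by $C(1+|b|)^m$. Since $\widehat\sigma$ has finite order $m_0$ (controlled by $k_\sigma$ through the representation above) and $\int (1+|b|)^{m_0}\,d|\mu| < \infty$ given $p \ge k_\sigma$, Theorem \ref{th:inversion_integrale_crochet} allows the commutation. The result is $\langle\widehat\sigma, \widehat\mu\,\phi\rangle$, which is by definition $\langle \widehat\mu\widehat\sigma,\phi\rangle$.

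The bookkeeping I must verify is that the order $m_0$ needed is at most $p$. If the naive order $2K \approx k_\sigma+2$ exceeds $p$, I would instead use the spatial representation $\langle\widehat\sigma,e^{-i\rho b}\phi\rangle=\int\sigma(t)\widehat\phi(t+b)\,dt$ directly, which needs only the weight $(1+|b|)^{k_\sigma}$. This confirms that the product $\widehat{\mu}\widehat{\sigma}$ is meaningful exactly in this sense.
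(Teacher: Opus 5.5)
Your argument is essentially the paper's: the same Fubini bound, the shift $t=s-b$, and the identification $\int\widehat\phi(\cdot+b)\,d\mu(b)=\mathcal{F}(\widehat\mu\phi)$. It ends at $\langle\widehat v,\phi\rangle=\int\sigma\,\mathcal{F}(\widehat\mu\phi)$, which the paper reads, as your fallback does, as $\langle\widehat\sigma,\widehat\mu\phi\rangle=\langle\widehat\mu\widehat\sigma,\phi\rangle$; the only difference is that you apply Fourier duality before integrating in $b$ rather than after.

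Your caution at that last step is well founded. The paper justifies it by claiming $\widehat\sigma$ has order at most $k_\sigma$, which already fails for $\sigma(t)=|t|$, whose transform $-2\,\mathrm{FV}(1/\rho^2)$ has order $2>k_\sigma=1$. So your main route really does need $m_0\le p$, and otherwise the product has to be understood through the spatial pairing, exactly as in your fallback.
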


\begin{proof}
Let $\varphi \in \mathcal{S}(\mathbb{R})$ be an arbitrary test function. By definition of the Fourier transform on tempered distributions, the action of the spectrum is evaluated as:
\begin{equation*}
    \langle \widehat{v}, \varphi \rangle_{\mathbb{R}} = \langle v, \widehat{\varphi} \rangle_{\mathbb{R}} = \int_{\mathbb{R}} \left( \int_{\mathbb{R}} \sigma(s-b) \, d\mu(b) \right) \widehat{\varphi}(s) \, ds.
\end{equation*}

We first justify the application of Fubini's theorem. Because the activation $\sigma$ has polynomial growth of order $k_\sigma$, we can bound the integrand: $|\sigma(s-b)| \le C(1+|s-b|)^{k_\sigma} \le C(1+|s|)^{k_\sigma}(1+|b|)^{k_\sigma}$. Consequently, the absolute double integral is bounded by:
\begin{equation*}
    \int_{\mathbb{R}} \int_{\mathbb{R}} |\sigma(s-b)| \, d|\mu|(b) \, |\widehat{\varphi}(s)| \, ds \leq C \left( \int_{\mathbb{R}} (1+|b|)^{k_\sigma} d|\mu|(b) \right) \left( \int_{\mathbb{R}} (1+|s|)^{k_\sigma} |\widehat{\varphi}(s)| \, ds \right).
\end{equation*}
Since $\widehat{\varphi} \in \mathcal{S}(\mathbb{R})$, its weighted integral is finite. Moreover, since $\mu \in \mathcal{M}_p(\mathbb{R})$ with $p \geq k_\sigma$, the measure has finite weighted total variation of order $p$, ensuring the first integral is strictly bounded. Fubini's theorem therefore strictly applies.

Swapping the integrals and performing the change of variable $x = s-b$ (with $ds = dx$), we obtain:
\begin{equation*}
    \langle \widehat{v}, \varphi \rangle_{\mathbb{R}} = \int_{\mathbb{R}} \sigma(x) \left( \int_{\mathbb{R}} \widehat{\varphi}(x+b) \, d\mu(b) \right) dx = \langle \sigma, \eta \rangle_{\mathbb{R}},
\end{equation*}
where we define the inner test function $\eta(x) := \int_{\mathbb{R}} \widehat{\varphi}(x+b) \, d\mu(b)$. 

Next, we expand the Fourier transform $\widehat{\varphi}(x+b) = \int_{\mathbb{R}} \varphi(\rho) e^{-i\rho(x+b)} \, d\rho$ and substitute it into $\eta(x)$. Applying Fubini's theorem once more to swap the integrals over $b$ and $\rho$:
\begin{equation*}
    \eta(x) = \int_{\mathbb{R}} \varphi(\rho) e^{-i\rho x} \left( \int_{\mathbb{R}} e^{-i\rho b} \, d\mu(b) \right) d\rho.
\end{equation*}
The inner integral over $b$ is precisely the classical continuous Fourier transform of the measure $\mu$, denoted $\widehat{\mu}(\rho)$. Thus:
\begin{equation*}
    \eta(x) = \int_{\mathbb{R}} \left( \varphi(\rho) \widehat{\mu}(\rho) \right) e^{-i\rho x} \, d\rho.
\end{equation*}
This expression demonstrates that $\eta$ is exactly the Fourier transform of the constructed function $\theta(\rho) := \widehat{\mu}(\rho) \varphi(\rho)$. Because $\mu \in \mathcal{M}_p(\mathbb{R})$, its Fourier transform $\widehat{\mu}$ is a $C^p$ function with bounded derivatives. 

Substituting $\eta = \widehat{\theta}$ back into the spatial duality bracket gives:
\begin{equation*}
    \langle \widehat{v}, \varphi \rangle_{\mathbb{R}} = \langle \sigma, \widehat{\theta} \rangle_{\mathbb{R}} = \langle \widehat{\sigma}, \theta \rangle_{\mathbb{R}} = \langle \widehat{\sigma}, \widehat{\mu} \varphi \rangle_{\mathbb{R}}.
\end{equation*}
Finally, because $\widehat{\sigma}$ is a tempered distribution of order at most $k_\sigma$ and $\widehat{\mu}$ is a $C^p$ function with bounded derivatives (with $p \geq k_\sigma$), the action of $\widehat{\sigma}$ on the test function $\widehat{\mu} \varphi$ defines the exact algebraic product of the distribution and the smooth multiplier. Therefore:
\begin{equation*}
    \langle \widehat{v}, \varphi \rangle_{\mathbb{R}} = \langle \widehat{\mu} \widehat{\sigma}, \varphi \rangle_{\mathbb{R}}.
\end{equation*}
Since this equality holds for any arbitrary test function $\varphi \in \mathcal{S}(\mathbb{R})$, we rigorously conclude by direct identification that $\widehat{v} = \widehat{\mu} \widehat{\sigma}$ in $\mathcal{S}'(\mathbb{R})$.
\end{proof}

The preceding lemmas provide a rigorous mechanism to manipulate the network's parameter space directly in the frequency domain, completely bypassing the spectral singularities of the activation function. We are now in a position to demonstrate that any sufficiently regular target function can be exactly reconstructed by applying the dual synthesis operator $\mathcal{R}_\sigma^*$ to our uniquely defined filtered analysis distribution. This culminates in the following neural representation theorem in the distributional sense.

\begin{theorem}[Neural Representation and Generalized Regularity]\label{th:main_th_distrib}
Let $f \in \mathcal{S}(\mathbb{R}^d)$ and assume Hypothesis \ref{hyp:complete_hyp_sigma}. The function $f$ can be represented exactly by a continuous neural network of infinite width:
\begin{equation}
    f(x) = \mathcal{R}_\sigma^* h(x) = \int_{\mathbb{S}^{d-1}} \int_{\mathbb{R}} h(w, b) \sigma(\langle w, x \rangle - b) \, db \, \underline{dw},
\end{equation}
with a density $h$ belonging to $L^1(\mathbb{S}^{d-1} \times \mathbb{R}; (1 + |b|)^{k_\sigma} db \, \underline{dw})$ if:
\begin{equation}\label{eq:condition_L1}
     (- \Delta_b)^{\alpha} (\mathcal{T}_g *_b \mathcal{R}_\sigma f) (w, b)  \in L^1(\mathbb{S}^{d-1} \times \mathbb{R}; (1 + |b|)^{k_\sigma} db \, \underline{dw}).
\end{equation}
\end{theorem}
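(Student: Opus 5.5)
The plan is to take as candidate density $h(w,b) := H_w(b)$, the filtered analysis distribution of Definition \ref{def:filter}. By assumption \eqref{eq:condition_L1} it is a genuine function in $L^1(\mathbb{S}^{d-1}\times\mathbb{R};(1+|b|)^{k_\sigma}db\,\underline{dw})$. The task is then to verify that $\mathcal{R}_\sigma^* h = f$. I would not use the Schwartz-space formula \eqref{eq:def_Rstarsigma} directly, since $h$ need not be Schwartz. Instead I would work with the integral formula of the statement, which converges absolutely: $|\sigma(\langle w,x\rangle-b)|\le C(1+|x|)^{k_\sigma}(1+|b|)^{k_\sigma}$ by Hypothesis \ref{hyp:complete_hyp_sigma}. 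This gives a continuous function of polynomial growth, hence an element of $\mathcal{S}'(\mathbb{R}^d)$.

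\textbf{Step 1 (one-dimensional spectrum).} For a.e.\ fixed $w$, the measure $\mu_w = h(w,\cdot)\,db$ lies in $\mathcal{M}_{k_\sigma}(\mathbb{R})$. Set $v_w(s) := \int_{\mathbb{R}} h(w,b)\sigma(s-b)\,db$. Lemma \ref{lem:spectral_convolution_identity} gives $\widehat{v_w} = \widehat{H_w}\,\widehat{\sigma}$ in $\mathcal{S}'(\mathbb{R})$. Inserting \eqref{eq:def_hw} and $\widehat{g}=(i\rho)^\alpha\widehat{\sigma}$, the factors $(i\rho)^\alpha\widehat{\sigma}/\widehat{g}$ cancel off the null set $\{\widehat{g}=0\}$, which is negligible by frequency completeness. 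Away from the origin one obtains
\[
\widehat{v_w}(\rho) = \frac{|\mathbb{S}^{d-1}|}{2(2\pi)^{d-1}}\,|\rho|^{d-1}\,\widehat{f}(\rho w).
\]
This right-hand side is a locally integrable function of polynomial growth times a Schwartz function, so the finite-value prescription should reduce to the ordinary function.

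\textbf{Step 2 (reassembly in $\mathbb{R}^d$).} Pair $\mathcal{R}_\sigma^*h$ with $\varphi\in\mathcal{S}(\mathbb{R}^d)$. By Fubini (justified by the weighted $L^1$ bound),
\[
\langle \mathcal{R}_\sigma^*h,\varphi\rangle = \int_{\mathbb{S}^{d-1}}\int_{\mathbb{R}} v_w(s)\,\psi^\varphi_w(s)\,ds\,\underline{dw},
\]
where $\psi^\varphi_w$ is the slice integral of $\varphi$ from the proof of Remark \ref{rm:espece_arrive_R_sigma}. By Parseval in 1D and the slice theorem $\widehat{\psi^\varphi_w}(\rho)=\widehat{\varphi}(\rho w)$, the inner integral equals
\[
\frac{1}{2\pi}\int_{\mathbb{R}}\widehat{v_w}(\rho)\,\widehat{\varphi}(-\rho w)\,d\rho.
\]
Substituting Step 1 gives $\frac{|\mathbb{S}^{d-1}|}{2(2\pi)^{d}}\int_{\mathbb{S}^{d-1}}\int_{\mathbb{R}}|\rho|^{d-1}\widehat{f}(\rho w)\widehat{\varphi}(-\rho w)\,d\rho\,\underline{dw}$. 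Folding $\rho<0$ onto $w\mapsto -w$ yields twice the integral over $\rho>0$. Since $|\mathbb{S}^{d-1}|\,\underline{dw}$ is the unnormalized surface measure, polar coordinates give $(2\pi)^{-d}\int_{\mathbb{R}^d}\widehat f(\xi)\widehat\varphi(-\xi)\,d\xi = \int f\varphi$. Hence $\mathcal{R}_\sigma^*h=f$ in $\mathcal{S}'$, and pointwise by continuity.

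\textbf{Main obstacle.} The delicate step is Step 1. It requires showing that the product $\widehat{H_w}\widehat{\sigma}$ coincides with the regular function above, with no residual distribution supported at $\rho=0$ or at zeros of $\widehat g$. Two issues arise here. First, $\widehat{\sigma}$ may be singular at the origin, as for ReLU. Second, $\widehat{H_w}$ is defined through a finite value, and the multiplication in Lemma \ref{lem:spectral_convolution_identity} is really that of the smooth $\widehat{\mu_w}=\widehat{H_w}$ by $\widehat\sigma$.

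I would first exploit that $h(w,\cdot)\in L^1((1+|b|)^{k_\sigma})$. This makes $\widehat{H_w}$ a $C^{k_\sigma}$ function, so the finite value is actually an honest function. It also forces the vanishing of $\widehat{H_w}$ at the origin to order $\alpha$ inherited from the factor $(i\rho)^\alpha$, which should kill any point-supported term of order $<\alpha$ coming from $\widehat\sigma = \widehat g/(i\rho)^\alpha$. If this fails, the fallback is to test only against $\varphi$ whose spectrum avoids the origin. One then concludes that $\mathcal{R}_\sigma^*h-f$ is a polynomial, and this polynomial must vanish: $f$ decays, and the weighted integrability of $h$ yields at most growth of order $k_\sigma$, which would still need a short separate argument.
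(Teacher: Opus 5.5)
\paragraph{Same route as the paper.} Your argument follows the paper's proof step for step: the same density $h(w,\cdot)=H_w$, the same Fubini reduction to $v_w=h(w,\cdot)*\sigma$ with $\widehat{v_w}=\widehat{H_w}\,\widehat\sigma$ from Lemma \ref{lem:spectral_convolution_identity}, and the same projection-slice and polar-coordinate reassembly. At your ``main obstacle'' the paper writes $\widehat{H_w}=(i\rho)^\alpha\Theta_w$ with $\Theta_w(\rho)=c_d|\rho|^{d-1}\widehat f(\rho w)/\widehat g(\rho)$. It then invokes Theorem \ref{th:associativity_distrib} to get $\Theta_w\cdot\big((i\rho)^\alpha\widehat\sigma\big)=\Theta_w\widehat g$. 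This is exactly your vanishing-to-order-$\alpha$ idea.

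\paragraph{The gap at the origin.} Hypothesis \ref{hyp:complete_hyp_sigma} does not give this vanishing, and the gap is shared by your version and the paper's. The relation $(i\rho)^\alpha\widehat\sigma=\widehat g$ fixes $\widehat\sigma$ only up to a combination $\sum_{k<\alpha}c_k\delta^{(k)}$. Since $\widehat{H_w}$ is built from $\widehat g$ alone, those terms are annihilated only if $\widehat{H_w}$ genuinely vanishes to order $\alpha$ at $0$. A zero of $\widehat g$ at the origin cancels the factor $(i\rho)^\alpha$, and $\Theta_w$ is then not an admissible multiplier.

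Concretely, take $d=3$, $\sigma(t)=1+(t-t^2+t^3/6)e^{-t}\mathbf{1}_{t>0}$ and $\alpha=1$. Then $\widehat g(\rho)=(i\rho)^3/(1+i\rho)^4$. This is smooth and nonzero for $\rho\neq0$, satisfies $|\widehat g|\sim|\rho|^{-1}$ at infinity, and $|\rho|^2/|\widehat g|^2=(1+\rho^2)^4/\rho^4$, so Hypothesis \ref{hyp:complete_hyp_sigma} holds with $k_\sigma=0$. Here $\widehat{H_w}(\rho)=-\frac{1}{2\pi}(1+i\rho)^4\widehat f(\rho w)$ is Schwartz, so \eqref{eq:condition_L1} holds for every $f\in\mathcal{S}(\mathbb{R}^3)$. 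Yet $\widehat\sigma=2\pi\delta+\widehat{\sigma-1}$ leaves the residual $2\pi\widehat{H_w}(0)\delta$ in $\widehat{v_w}$, and one finds $\mathcal{R}_\sigma^*h=f-\frac{1}{2\pi}\int_{\mathbb{R}^3}f$.

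\paragraph{Why the fallback fails, and what fixes Step 1.} Your fallback cannot repair this. The residual is a polynomial of degree at most $k_\sigma$ (here a constant), which is compatible with every growth bound you have. So the decay of $f$ together with the weighted integrability of $h$ can never force it to vanish.

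What closes Step 1 is an explicit assumption at the origin, e.g.\ $\widehat g(0)\neq0$, which holds for ReLU, Heaviside and $|t|$. Then $\Theta_w$ is continuous near $0$, and $\widehat{H_w}=(i\rho)^\alpha\Theta_w=O(|\rho|^{\alpha+d-1})$ is smooth enough there, with derivatives up to order $\alpha-1$ vanishing at $0$. Read both facts off the explicit formula using $f\in\mathcal{S}$; the $C^{k_\sigma}$ regularity coming from \eqref{eq:condition_L1} says nothing about vanishing at $0$. Every $\delta^{(k)}$ with $k<\alpha$ is then killed, and the regularized part of $\widehat\sigma$ pairs with $\widehat{H_w}\psi$ as an absolutely convergent integral. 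With that assumption made explicit, your Steps 1--2 go through as written.
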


\begin{remark}
Condition \eqref{eq:condition_L1} implies that for almost every $w \in \mathbb{S}^{d-1}$:

$$
\rho \mapsto (i\rho)^{\alpha} \frac{|\mathbb{S}^{d-1}|}{2(2\pi)^{d-1}} \frac{|\rho|^{d-1}}{\widehat{g}(\rho)} \widehat{f}(\rho w) 
$$
which is a distribution on $\mathbb{R}$, can be identified as a $C^{k_\sigma}$ function. Consequently, $\widehat{f}$ needs to compensate for the singularities created by the zeros of $g$, if any. 
\end{remark}

\begin{proof}

Suppose that the canonical density $h = (- \Delta_b)^{\alpha} \mathcal{T}_g *_b \mathcal{R}_\sigma f$ belongs to the weighted space $L^1(\mathbb{S}^{d-1} \times \mathbb{R}; (1 + |b|^{k_\sigma}) dx)$.
Thanks to this weighting, for any fixed $x \in \mathbb{R}^d$, the point-wise spatial integral defining $\mathcal{R}_\sigma^* h(x)$ is absolutely convergent (since the growth of $\sigma$ is controlled by $|\sigma(\langle w, x \rangle - b)| \le C(1+|x|)^{k_\sigma}(1+|b|)^{k_\sigma}$.

To demonstrate the equality $f = \mathcal{R}_\sigma^* h$ in the weak sense on $\mathcal{S}'(\mathbb{R}^d)$, let $\phi \in \mathcal{S}(\mathbb{R}^d)$ be an arbitrary test function. By definition of the dual action, we evaluate the duality bracket:
\begin{equation*}
    \langle \mathcal{R}_\sigma^* h, \phi \rangle_{\mathbb{R}^d} = \langle \mathcal{R}_\sigma \phi, h \rangle_{\mathbb{S}^{d-1} \times \mathbb{R}}.
\end{equation*}

To rigorously justify writing this bracket as a Lebesgue integral, we must establish that the smooth function $\mathcal{R}_\sigma \phi(w, b)$ possesses at most polynomial growth of order $k_\sigma$. We have $\mathcal{R}_\sigma \phi(w, b) = \langle \sigma(t), \psi_w(t+b) \rangle_t$, where the projected test function is $\psi_w(s) = \int_{\langle w, x \rangle = s} \phi(x) \, d\mu_s(x) \in \mathcal{S}(\mathbb{R})$. According to Hypothesis \ref{hyp:complete_hyp_sigma}, the continuous activation $\sigma$ strictly satisfies $|\sigma(t)| \le C(1+|t|)^{k_\sigma}$. Using the algebraic inequality $(1+|s-b|)^{k_\sigma} \le 2^{k_\sigma}(1+|s|)^{k_\sigma}(1+|b|)^{k_\sigma}$, we can bound the transform evaluated at $t=s-b$:
\begin{equation*}
    |\mathcal{R}_\sigma \phi(w, b)| \le \int_{\mathbb{R}} |\sigma(s-b)| |\psi_w(s)| \, ds \le 2^{k_\sigma} C (1+|b|)^{k_\sigma} \int_{\mathbb{R}} (1+|s|)^{k_\sigma} |\psi_w(s)| \, ds.
\end{equation*}
Because the spatial test function $\phi$ belongs to $\mathcal{S}(\mathbb{R}^d)$, its 1D projections $\psi_w$ are uniformly bounded within the Schwartz space topology across all orientations $w \in \mathbb{S}^{d-1}$. The integral over $s$ therefore evaluates to a finite constant independent of $b$ and $w$. This confirms that $|\mathcal{R}_\sigma \phi(w, b)| \le M(1+|b|)^{k_\sigma}$. Since the density $h$ inherently belongs to the weighted space $L^1(\mathbb{S}^{d-1} \times \mathbb{R}; (1 + |b|)^{k_\sigma} db \, \underline{dw})$, their point-wise product is absolutely integrable. Consequently, the duality bracket strictly equates to the standard Lebesgue integral:
\begin{equation*}
    \langle \mathcal{R}_\sigma^* h, \phi \rangle_{\mathbb{R}^d} = \int_{\mathbb{S}^{d-1}} \int_{\mathbb{R}} h(w, b) \mathcal{R}_\sigma \phi(w, b) \, db \, \underline{dw}.
\end{equation*}

Next, recall that $\mathcal{R}_\sigma \phi(w, b) = \langle \sigma(t), \psi_w(t+b) \rangle_t$, where $\psi_w(s) = \int_{\langle w, x \rangle = s} \phi(x) \, d\mu_s(x) \in \mathcal{S}(\mathbb{R})$ so that the integral becomes:
\begin{align*}
    \langle \mathcal{R}_\sigma^* h, \phi \rangle_{\mathbb{R}^d} &= \int_{\mathbb{S}^{d-1}} \left( \int_{\mathbb{R}} h(w, b) \langle \sigma(t), \psi_w(t+b) \rangle_t \, db \right) \underline{dw} \\
    &=  \int_{\mathbb{S}^{d-1}} \left( \int_{\mathbb{R}} h(w, b) \int_{\mathbb{R}} \sigma(t) \psi_w(t+b) \, dt \, db \right) \underline{dw}
\end{align*}
where we replaced the duality bracket with an integral as $\sigma$ is a function of polynomial growth. Next, we show that this integral is absolutely convergent: 

\begin{align*}
    \int_{\mathbb{S}^{d-1}} \int_{\mathbb{R}} \int_{\mathbb{R}} \left| h(w, b) \sigma(t) \psi_w(t+b) \right| \, dt \, db \, \underline{dw} &\leq \int_{\mathbb{S}^{d-1}} \int_{\mathbb{R}} \int_{\mathbb{R}} |h(w, b)| C(1 + |t|^{k_\sigma}) |\psi_w(t+b)| \, dt \, db \, \underline{dw} \\
    &\leq C \int_{\mathbb{S}^{d-1}} \int_{\mathbb{R}} \int_{\mathbb{R}} |h(w, b)| (1 + |t+b - b|^{k_\sigma}) |\psi_w(t+b)| \, dt \, db \, \underline{dw} \\
    &\leq C' \int_{\mathbb{S}^{d-1}} \int_{\mathbb{R}} \int_{\mathbb{R}} |h(w, b)| (1 + |t+b|^{k_\sigma} + |b|^{k_\sigma}) |\psi_w(t+b)| \, dt \, db \, \underline{dw} \\
    &= C' \int_{\mathbb{S}^{d-1}} \int_{\mathbb{R}} |h(w, b)| \left( \int_{\mathbb{R}} (1 + |s|^{k_\sigma} + |b|^{k_\sigma}) |\psi_w(s)| \, ds \right) db \, \underline{dw} \\
    &= C' \int_{\mathbb{S}^{d-1}} \int_{\mathbb{R}} |h(w, b)| (1 + |b|^{k_\sigma}) \left( \int_{\mathbb{R}} |\psi_w(s)| \, ds \right) db \, \underline{dw} \\
    &\quad + C' \int_{\mathbb{S}^{d-1}} \int_{\mathbb{R}} |h(w, b)| \left( \int_{\mathbb{R}} |s|^{k_\sigma} |\psi_w(s)| \, ds \right) db \, \underline{dw} \\
    &\leq C'' \int_{\mathbb{S}^{d-1}} \int_{\mathbb{R}} |h(w, b)| (1 + |b|^{k_\sigma}) \, db \, \underline{dw} \\
    &= C'' \|h\|_{L^1(\mathbb{S}^{d-1} \times \mathbb{R}; (1 + |b|^{k_\sigma}) db \, \underline{dw})} < \infty.
\end{align*}

Since the triple integral is absolutely convergent, Fubini's theorem allows us to safely rearrange the order of integration. Let us apply the change of variable $s = t+b$ (so $t = s-b$ and $dt = ds$) and group the variables to isolate a 1D spatial convolution:
\begin{equation*}
    \langle \mathcal{R}_\sigma^* h, \phi \rangle_{\mathbb{R}^d} = \int_{\mathbb{S}^{d-1}} \left( \int_{\mathbb{R}} \psi_w(s) \left( \int_{\mathbb{R}} h(w, b) \sigma(s-b) \, db \right) ds \right) \underline{dw}.
\end{equation*}

Let us define the inner convolution $v_w(s) := (h(w, \cdot) *_b \sigma)(s)$.  Since $h(w, \cdot) \in L^1(\mathbb{R}; (1+|b|^{k_\sigma})db)$, we can apply Lemma \ref{lem:spectral_convolution_identity} and $v_w$ is a valid tempered distribution. Moreover: 
\begin{equation}\label{eq:v_w}
\langle \mathcal{R}_\sigma^* h, \phi \rangle_{\mathbb{R}^d} =  \int_{\mathbb{S}^{d-1}}  \langle v_w(s), \psi_w(s) \rangle_s \, \underline{dw} =  \frac{1}{2\pi} \int_{\mathbb{S}^{d-1}}  \langle \widehat{v_w}(\rho), \widehat{\psi_w}(-\rho) \rangle_\rho \, \underline{dw}.
\end{equation}

Now we compute  $\widehat{v_w}(\rho)$ using Lemma \ref{lem:spectral_convolution_identity} again. It holds that:
\begin{equation*}
    \widehat{v_w}(\rho) = \widehat{h}(w, \rho) \widehat{\sigma}(\rho).
\end{equation*}
By substituting the explicit definition of $\widehat{h}(w, \rho)$, we obtain:
\begin{equation*}
    \widehat{v_w}(\rho) = \left( (i\rho)^{\alpha} \frac{|\mathbb{S}^{d-1}|}{2(2\pi)^{d-1}} \frac{|\rho|^{d-1}}{\widehat{g}(\rho)} \widehat{f}(\rho w) \right) \widehat{\sigma}(\rho).
\end{equation*}

Since condition \eqref{eq:condition_L1} ensures that $\widehat{h}(w, \cdot)$ is a $C^{k_\sigma}$ function with bounded derivatives, Theorem \ref{th:associativity_distrib} strictly applies in $\mathcal{S}'(\mathbb{R})$. According to Hypothesis \ref{hyp:complete_hyp_sigma}, the regularized spectrum is $\widehat{g}(\rho) = (i\rho)^\alpha \widehat{\sigma}(\rho)$, so the singular terms perfectly cancel out, leaving the strictly regular function:
\begin{equation*}
    \widehat{v_w}(\rho) = \frac{|\mathbb{S}^{d-1}|}{2(2\pi)^{d-1}} |\rho|^{d-1} \widehat{f}(\rho w).
\end{equation*}

By the projection-slice theorem, the 1D spectrum of the projected spatial test function evaluates to $\widehat{\psi_w}(-\rho) = \widehat{\phi}(-\rho w)$. Since the spatial test function $\phi$ is real-valued, its spectrum satisfies Hermitian symmetry: $\widehat{\phi}(-\rho w) = \overline{\widehat{\phi}(\rho w)}$.
The 1D duality bracket resolves into an absolutely convergent Lebesgue integral, completely bypassing the singularities of $\widehat{\sigma}$:
\[
\langle v_w, \psi_w \rangle_{\mathbb{R}} = \frac{|\mathbb{S}^{d-1}|}{2(2\pi)^{d}} \int_{\mathbb{R}} |\rho|^{d-1} \widehat{f}(\rho w) \overline{\widehat{\phi}(\rho w)} \, d\rho.
\]

Reinjecting this result into \eqref{eq:v_w} yields:
\[
\langle \mathcal{R}_\sigma^* h, \phi \rangle_{\mathbb{R}^d} = \frac{|\mathbb{S}^{d-1}|}{2(2\pi)^d} \int_{\mathbb{S}^{d-1}} \int_{\mathbb{R}} |\rho|^{d-1} \widehat{f}(\rho w) \overline{\widehat{\phi}(\rho w)} \, d\rho \, \underline{dw}.
\]
To conclude, we perform a change of variables to Cartesian coordinates by setting $\xi = \rho w \in \mathbb{R}^d$. The double covering of the space by the parameter cylinder imposes the differential geometric relationship $d\rho \, \underline{dw} = \frac{2}{|\mathbb{S}^{d-1}|} \frac{d\xi}{|\xi|^{d-1}}$.
The singularity term cleanly cancels out:
\[
\langle \mathcal{R}_\sigma^* h, \phi \rangle_{\mathbb{R}^d} = \frac{1}{(2\pi)^d} \int_{\mathbb{R}^d} \widehat{f}(\xi) \overline{\widehat{\phi}(\xi)} \, d\xi.
\]
By the standard Plancherel identity on $\mathbb{R}^d$, this spectral integral is exactly equal to the spatial bracket $\langle f, \phi \rangle_{\mathbb{R}^d}$. Since this equality holds unconditionally for any arbitrary test function $\phi \in \mathcal{S}(\mathbb{R}^d)$, we rigorously conclude that $\mathcal{R}_\sigma^* h = f$ in the sense of tempered distributions.

\end{proof}

\section{Banach Formulation and the Space of Weighted Radon Measures}
\label{sec:caracterization_barron_tv}
While the Schwartz space $\mathcal{S}(\mathbb{R}^d)$ provided the necessary topological framework to rigorously construct the generalized Radon operators and handle singular activations, assuming that a target function $f$ possesses rapid spatial decay is highly restrictive and unrealistic for standard machine learning applications. To bridge our harmonic analysis with the operational realities of continuous neural networks, we must extend our representation theorems to a broader class of target functions, specifically those exhibiting up to polynomial growth. 

To achieve this extension while preserving a strict topological duality framework—and crucially, without losing the ability to invoke the Riesz-Markov-Kakutani representation theorem for the network's weights—we transition from the weak topology of distributions to weighted Banach spaces. We define a bias weight function $W_p(b) = (1 + |b|)^p$ with an order $p \geq k_\sigma$. We define $\mathcal{M}_{p}(\mathbb{S}^{d-1} \times \mathbb{R})$ as the space of signed Radon measures with finite weighted total variation:

\begin{equation}
    \|\mu\|_{\text{TV}, p} = \int_{\mathbb{S}^{d-1} \times \mathbb{R}} W_p(b) \, d|\mu|(w, b) < \infty.
\end{equation}
This space acts as the exact topological dual of $C_{0, p}(\mathbb{S}^{d-1} \times \mathbb{R})$, the space of continuous functions $u$ such that the quotient $u(w, b) / W_p(b)$ vanishes at infinity. The canonical norm on this space is given by $\|u\|_{C_{0,p}} = \sup_{(w,b)} \frac{|u(w,b)|}{W_p(b)}$.

\begin{lemma}[Weighted Riesz-Markov-Kakutani Theorem]
The space $\mathcal{M}_p(\mathbb{S}^{d-1} \times \mathbb{R})$ is isometrically isomorphic to the topological dual of $C_{0,p}(\mathbb{S}^{d-1} \times \mathbb{R})$.
\end{lemma}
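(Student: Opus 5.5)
The plan is to reduce the weighted statement to the classical Riesz–Markov–Kakutani theorem on the locally compact Hausdorff space $X = \mathbb{S}^{d-1} \times \mathbb{R}$, using multiplication by the weight as an isometry. Define $M : C_{0,p}(X) \to C_0(X)$ by $Mu = u/W_p$. Since $W_p$ is continuous and strictly positive (indeed $W_p \ge 1$), $M$ is a linear bijection with inverse $v \mapsto W_p v$. By the very definition of the norm on $C_{0,p}$, $\|Mu\|_{\infty} = \|u\|_{C_{0,p}}$, so $M$ is an isometric isomorphism. Its transpose $M^\top : C_0(X)^* \to C_{0,p}(X)^*$ is then also an isometric isomorphism. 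By the classical theorem, $C_0(X)^*$ is isometrically identified with the finite signed Radon measures $\mathcal{M}(X)$ under the total variation norm.

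Next, I will identify the composite map explicitly. Given $\nu \in \mathcal{M}(X)$, the functional $M^\top \nu$ acts by $u \mapsto \int (u/W_p)\, d\nu$. Define $\mu := \nu / W_p$, that is, $d\mu = W_p^{-1} d\nu$. Then the functional reads $u \mapsto \int u \, d\mu$, and $d|\mu| = W_p^{-1} d|\nu|$. Hence
\[
\|\mu\|_{\mathrm{TV},p} = \int W_p \, d|\mu| = |\nu|(X) = \|\nu\|_{\mathrm{TV}}.
\]
Conversely, any $\mu \in \mathcal{M}_p(X)$ gives $\nu = W_p \mu \in \mathcal{M}(X)$. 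So the map $\Phi : \mathcal{M}_p(X) \to C_{0,p}(X)^*$, $\Phi(\mu)(u) = \int u \, d\mu$, is well defined and equals $M^\top \circ (\mu \mapsto W_p\mu)$. Well-definedness also follows directly from $|\int u\, d\mu| \le \|u\|_{C_{0,p}} \|\mu\|_{\mathrm{TV},p}$. As a composite of a bijective isometry with the Riesz isometry, $\Phi$ is a bijective isometry.

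The main point requiring care is measure-theoretic. I must check that $\mu = W_p^{-1}\nu$ is again a Radon measure, and that $\nu = W_p \mu$ is a finite Radon measure whenever $\|\mu\|_{\mathrm{TV},p} < \infty$. The first holds because $W_p^{-1}$ is bounded and continuous. For the second, $X$ is $\sigma$-compact and metrizable, so every locally finite Borel measure is Radon, and finiteness is exactly the weighted TV condition. The identity $|W_p^{-1}\nu| = W_p^{-1}|\nu|$ follows from the positivity of the density, via the Hahn decomposition. Finally, I will note that $W_p$ depends only on $b$, and that the sphere factor is compact, so no additional decay in $w$ is needed.
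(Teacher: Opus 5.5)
Your proposal is correct and follows essentially the same route as the paper: the isometry $u \mapsto u/W_p$ from $C_{0,p}(X)$ onto $C_0(X)$, the classical Riesz--Markov--Kakutani theorem, and the reweighting $d\mu = W_p^{-1}\,d\nu$ with $\|\mu\|_{\mathrm{TV},p} = \|\nu\|_{\mathrm{TV}}$. You are slightly more complete than the paper, which only shows that every functional is represented by some $\mu$ of the same norm; you also check the converse direction ($\nu = W_p\mu$ is a finite Radon measure, so $\mu \mapsto \int u\,d\mu$ is a bijection) and the identity $|W_p^{-1}\nu| = W_p^{-1}|\nu|$.
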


\begin{proof}
Let $X = \mathbb{S}^{d-1} \times \mathbb{R}$. We construct an isometric isomorphism between the weighted space $C_{0,p}(X)$ and the classical space $C_0(X)$ of continuous functions vanishing at infinity. 

Define the mapping $T : C_{0,p}(X) \to C_0(X)$ by $T(u) = u / W_p$. By the very definition of the spaces and their respective norms, $T$ is an isometric isomorphism, meaning $\|T(u)\|_\infty = \|u\|_{C_{0,p}}$.

Let $L \in (C_{0,p}(X))^*$ be an arbitrary bounded linear functional. The composition $\tilde{L} = L \circ T^{-1}$ defines a bounded linear functional on $C_0(X)$. By the classical Riesz-Markov-Kakutani representation theorem, there exists a unique finite Radon measure $\nu \in \mathcal{M}(X)$ such that for all $v \in C_0(X)$:
\begin{equation*}
    \tilde{L}(v) = \int_X v(w,b) \, d\nu(w,b), \quad \text{with } \|\tilde{L}\|_{(C_0)^*} = \|\nu\|_{\text{TV}}.
\end{equation*}

We can therefore express the action of $L$ on any test function $u \in C_{0,p}(X)$ by evaluating $\tilde{L}$ on $T(u)$:
\begin{equation*}
    L(u) = \tilde{L}(T(u)) = \int_X \frac{u(w,b)}{W_p(b)} \, d\nu(w,b).
\end{equation*}

Let us define the signed measure $\mu$ by its density with respect to $\nu$, specifically $d\mu = \frac{1}{W_p} d\nu$. The integral representation immediately becomes $L(u) = \int_X u(w,b) \, d\mu(w,b)$. 

It remains to verify that $\mu \in \mathcal{M}_p(X)$ and that the global isometry holds. We compute its weighted total variation:
\begin{equation*}
    \|\mu\|_{\text{TV}, p} = \int_X W_p(b) \, d|\mu|(w,b) = \int_X W_p(b) \frac{1}{W_p(b)} \, d|\nu|(w,b) = \int_X d|\nu| = \|\nu\|_{\text{TV}}.
\end{equation*}

Since $\nu$ is a finite measure, $\|\mu\|_{\text{TV}, p} < \infty$, strictly confirming that $\mu \in \mathcal{M}_p(X)$. Furthermore, because $T$ is an isometry, $\|\tilde{L}\|_{(C_0)^*} = \|L\|_{(C_{0,p})^*}$, which immediately yields $\|L\|_{(C_{0,p})^*} = \|\mu\|_{\text{TV}, p}$. This rigorously establishes the exact isometric duality $(C_{0,p}(X))^* \cong \mathcal{M}_p(X)$.
\end{proof}

\subsection{Topological Redefinition of the Analysis Operator $\mathcal{R}_\sigma$}

To ensure that the transform of a target function strictly belongs to the pre-dual $C_{0, p}(\mathbb{S}^{d-1} \times \mathbb{R})$, we must constrain the analysis domain using spatial weights.

\begin{definition}[Radon Transform towards Weighted $C_0$]
Let $\sigma \in C^0(\mathbb{R})$ be a continuous activation function with at most polynomial growth of order $k_\sigma$. Let $p > k_\sigma$ be the chosen weight parameter.
For a target function $f$ belonging to the spatially weighted Lebesgue space $L^1(\mathbb{R}^d; (1+|x|)^p dx)$, the analysis operator $\mathcal{R}_\sigma : L^1_{p}(\mathbb{R}^d) \to C_{0, p}(\mathbb{S}^{d-1} \times \mathbb{R})$ is defined by the absolute Lebesgue integral:
\begin{equation}
    \mathcal{R}_\sigma f(w, b) = \int_{\mathbb{R}^d} f(x) \sigma(\langle w, x \rangle - b) \, dx.
\end{equation}
\end{definition}

\begin{proposition}[Weighted Regularity and Asymptotic Vanishing]
Under these assumptions, the image $\mathcal{R}_\sigma f$ indeed belongs to $C_{0, p}(\mathbb{S}^{d-1} \times \mathbb{R})$.
\end{proposition}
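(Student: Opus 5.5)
We need to show three things: $\mathcal{R}_\sigma f$ is well defined pointwise, it is continuous on $\mathbb{S}^{d-1}\times\mathbb{R}$, and $\mathcal{R}_\sigma f(w,b)/W_p(b)\to 0$ as $|b|\to\infty$ uniformly in $w$. Since $\mathbb{S}^{d-1}$ is compact, vanishing at infinity on the cylinder amounts to this uniform decay in $b$. The single tool throughout is the growth bound $|\sigma(t)|\le C(1+|t|)^{k_\sigma}$ combined with the elementary inequality $(1+|\langle w,x\rangle-b|)^{k_\sigma}\le (1+|x|)^{k_\sigma}(1+|b|)^{k_\sigma}$, valid since $|w|=1$.

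\textbf{Step 1 (well-posedness and a weighted bound).} The elementary inequality gives
\[
|f(x)\sigma(\langle w,x\rangle-b)|\le C|f(x)|(1+|x|)^{k_\sigma}(1+|b|)^{k_\sigma}.
\]
Because $k_\sigma<p$, the right-hand side is integrable for $f\in L^1_p(\mathbb{R}^d)$. So the integral converges absolutely and
\[
|\mathcal{R}_\sigma f(w,b)|\le C\|f\|_{L^1_{k_\sigma}}(1+|b|)^{k_\sigma}.
\]
This already shows $\|\mathcal{R}_\sigma f\|_{C_{0,p}}\le C\|f\|_{L^1_p}$, which gives boundedness of the operator.

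\textbf{Step 2 (continuity).} Take $(w_n,b_n)\to(w,b)$. The integrand converges pointwise because $\sigma$ is continuous. The $b_n$ are bounded, so the Step 1 bound with $\sup_n(1+|b_n|)^{k_\sigma}$ gives an $n$-independent integrable majorant. Dominated convergence then yields continuity.

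\textbf{Step 3 (asymptotic vanishing, the main point).} Dividing the Step 1 bound by $W_p(b)$ gives
\[
\frac{|\mathcal{R}_\sigma f(w,b)|}{W_p(b)}\le C\|f\|_{L^1_{k_\sigma}}(1+|b|)^{k_\sigma-p},
\]
uniformly in $w$. This tends to $0$ as $|b|\to\infty$ precisely because the statement assumes the strict inequality $p>k_\sigma$. Here the definition's strict inequality $p>k_\sigma$ is essential, as opposed to the $p\ge k_\sigma$ used earlier. At $p=k_\sigma$ this crude bound fails, and one would need a finer argument such as a density/truncation argument.

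Given the strict inequality, this step is immediate. For any $\varepsilon>0$, the set where the quotient is at least $\varepsilon$ lies in $\mathbb{S}^{d-1}\times[-B,B]$ for some $B$, which is compact. So $\mathcal{R}_\sigma f\in C_{0,p}(\mathbb{S}^{d-1}\times\mathbb{R})$.

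I expect no real obstacle beyond keeping track of where the strict inequality $p>k_\sigma$ enters (Step 3) and obtaining the uniform majorant for dominated convergence (Step 2).
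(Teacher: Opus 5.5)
Your proof is correct. Steps 1 and 2 match the paper: the same majorant $C|f(x)|(1+|x|)^{k_\sigma}(1+|b|)^{k_\sigma}$ and the same dominated-convergence argument for continuity.

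Your Step 3 takes a more direct route. The paper first proves vanishing at infinity for $g\in C_c(\mathbb{R}^d)$, using dominated convergence at fixed $w$ with the factor $(1+|b|)^{-(p-k_\sigma)}$. It then passes to general $f\in L^1_p(\mathbb{R}^d)$ via three ingredients: density of $C_c$ in $L^1_p$, the operator bound $\|\mathcal{R}_\sigma h\|_{C_{0,p}}\le C'\|h\|_{L^1_p}$, and closedness of $C_{0,p}$.

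Your estimate $|\mathcal{R}_\sigma f(w,b)|/W_p(b)\le C\|f\|_{L^1_{k_\sigma}}(1+|b|)^{k_\sigma-p}$ shows that this detour is superfluous. The domination the paper uses for $g$ holds verbatim for $f$, since it only needs $\int|f(x)|(1+|x|)^{k_\sigma}\,dx<\infty$. Your version also gives an explicit rate and makes the uniformity in $w$ explicit. That uniformity is what vanishing at infinity on the cylinder actually requires, and the paper's fixed-$w$ limit leaves it implicit. The paper's scheme is more robust in principle, since it only needs qualitative decay for compactly supported $g$ plus a uniform operator bound. Here, though, the explicit rate makes it unnecessary.

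One correction to your aside. At $p=k_\sigma$ it is not just the crude bound that fails: the statement itself is false under the growth hypothesis alone, so no density or truncation argument can rescue it. Take $\sigma(t)=|t|$ (so $k_\sigma=1$) and a nonzero $f\ge 0$ in $C_c(\mathbb{R}^d)$. For large $b$ one gets $\mathcal{R}_\sigma f(w,b)=b\int f\,dx-\int f(x)\langle w,x\rangle\,dx$, hence $\mathcal{R}_\sigma f(w,b)/(1+|b|)\to\int_{\mathbb{R}^d} f\,dx\neq 0$ as $b\to+\infty$. The paper's density argument likewise relies on $p>k_\sigma$ in its dominated-convergence step.
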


\begin{proof}
By the standard algebraic inequality $(1 + |\langle w, x \rangle - b|)^{k_\sigma} \le C' (1 + |x|)^{k_\sigma} (1 + |b|)^{k_\sigma}$, the integrand is majorized by $C' |f(x)| (1+|x|)^{k_\sigma} (1+|b|)^{k_\sigma}$. Since $f \in L^1_{p}$ and $p > k_\sigma$, absolute integrability is guaranteed for any fixed $b$. As $\sigma$ is continuous and the integrand is dominated by a uniformly integrable function on each compact of $\mathbb{S}^{d-1} \times \mathbb{R}$, the standard theorem of continuity of parametric integrals gives that $\mathcal{R}_\sigma f$ is a continuous function.

Next, we need to prove that $\lim_{|b| \to \infty} \mathcal{R}_\sigma f(w,b) / W_p(b) = 0$. To do so, we rely on the density of $C_c(\mathbb{R}^d)$ in $L^1_{p}(\mathbb{R}^d)$. Let $g \in C_c(\mathbb{R}^d)$ be a compactly supported approximation of $f$. We study the asymptotic behavior of the weighted integrand:
\begin{equation*}
    I_b(x) = g(x) \frac{\sigma(\langle w, x \rangle - b)}{(1+|b|)^p}.
\end{equation*}
We can bound this integrand pointwise for any $b \in \mathbb{R}$:
\begin{equation*}
    |I_b(x)| \le C' |g(x)| (1+|x|)^{k_\sigma} \frac{1}{(1+|b|)^{p-k_\sigma}} \le C' |g(x)| (1+|x|)^{k_\sigma}.
\end{equation*}
Since $g$ is continuous and compactly supported, the dominating function $F(x) = C' |g(x)| (1+|x|)^{k_\sigma}$ is bounded and compactly supported. Therefore, it is strictly integrable on $\mathbb{R}^d$ ($F \in L^1(\mathbb{R}^d)$), and it bounds the integrand $I_b(x)$ independently of $b$.

Furthermore, for any fixed $x \in \mathbb{R}^d$ and $w \in \mathbb{S}^{d-1}$, since $p > k_\sigma$, the term $(1+|b|)^{-(p-k_\sigma)}$ strictly converges to $0$ as $|b| \to \infty$. Consequently, the integrand converges to zero everywhere: $\lim_{|b| \to \infty} I_b(x) = 0$.

By Lebesgue's Dominated Convergence Theorem, we can pass the limit inside the integral, proving that:
\begin{equation*}
    \lim_{|b| \to \infty} \frac{\mathcal{R}_\sigma g(w,b)}{W_p(b)} = 0.
\end{equation*}
Thus, $\mathcal{R}_\sigma g \in C_{0,p}(\mathbb{S}^{d-1} \times \mathbb{R})$. 

Finally, since $C_c(\mathbb{R}^d)$ is dense in $L^1_p(\mathbb{R}^d)$, there exists a sequence of compactly supported continuous functions $g_n$ converging to $f$ in the $L^1_p$ norm, meaning $\lim_{n \to \infty} \|f - g_n\|_{L^1_p} = 0$. By the linearity of the transform and the uniform bound established at the beginning of this proof, we have:
\begin{equation*}
    \|\mathcal{R}_\sigma f - \mathcal{R}_\sigma g_n\|_{C_{0,p}} = \|\mathcal{R}_\sigma (f - g_n)\|_{C_{0,p}} \le C' \|f - g_n\|_{L^1_p}.
\end{equation*}
As $n \to \infty$, the right-hand side tends to $0$. This inequality guarantees that the sequence of transforms $\mathcal{R}_\sigma g_n$ converges uniformly to $\mathcal{R}_\sigma f$ with respect to the weighted norm of the Banach space $C_{0,p}(\mathbb{S}^{d-1} \times \mathbb{R})$. Since each $\mathcal{R}_\sigma g_n$ vanishes at infinity, and thus belongs to $C_{0,p}(\mathbb{S}^{d-1} \times \mathbb{R})$, and because this space is complete (closed under uniform convergence), we conclude that the limit $\mathcal{R}_\sigma f$ strictly belongs to $C_{0,p}(\mathbb{S}^{d-1} \times \mathbb{R})$ as well.

\end{proof}

\subsection{Rigorous Construction of the Adjoint by Weak-$*$ Duality}

The synthesis operator natively integrates directly with respect to a weighted signed measure $d\mu$, absorbing the polynomial divergence of the activation.

\begin{proposition}[Dual Synthesis Operator $\mathcal{R}_\sigma^*$]
Let $\mu \in \mathcal{M}_{p}(\mathbb{S}^{d-1} \times \mathbb{R})$.
The adjoint operator $\mathcal{R}_\sigma^* : \mathcal{M}_{p}(\mathbb{S}^{d-1} \times \mathbb{R}) \to L^\infty_{-k_\sigma}(\mathbb{R}^d)$ (where the output space has a polynomial growth of order $k_\sigma$) is uniquely defined by the bilinear duality relationship:
\begin{equation}
    \langle \mathcal{R}_\sigma f, \mu \rangle_{C_{0, p}, \mathcal{M}_{p}} = \langle f, \mathcal{R}_\sigma^* \mu \rangle_{L^1_{p}, L^\infty_{-p}}.
\end{equation}
Its point-wise action (almost everywhere) is given by:
\begin{equation}
    \mathcal{R}_\sigma^* \mu(x) = \int_{\mathbb{S}^{d-1}} \int_{\mathbb{R}} \sigma(\langle w, x \rangle - b) \, d\mu(w, b).
\end{equation}
\end{proposition}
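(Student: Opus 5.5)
The approach is to define $\mathcal{R}_\sigma^*\mu$ directly by the stated pointwise formula, show that it lies in the weighted space $L^\infty_{-k_\sigma}(\mathbb{R}^d)$, and then verify the duality relation by Fubini. Uniqueness then follows from the duality $L^1_p$--$L^\infty_{-p}$.

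\textbf{Step 1: well-posedness and growth.} Fix $x\in\mathbb{R}^d$. The growth assumption on $\sigma$, combined with the algebraic inequality already used for $\mathcal{R}_\sigma$, gives
$$|\sigma(\langle w,x\rangle-b)|\le C'(1+|x|)^{k_\sigma}(1+|b|)^{k_\sigma}\le C'(1+|x|)^{k_\sigma}W_p(b).$$
The last inequality uses $p\ge k_\sigma$. Integrating against $d|\mu|$ shows that the integral defining $\mathcal{R}_\sigma^*\mu(x)$ converges absolutely, with
$$|\mathcal{R}_\sigma^*\mu(x)|\le C'(1+|x|)^{k_\sigma}\|\mu\|_{\mathrm{TV},p}.$$
Continuity of $\sigma$ and dominated convergence (with the dominant $C'(1+R)^{k_\sigma}W_p$ on $|x|\le R$) show that $\mathcal{R}_\sigma^*\mu$ is continuous, hence measurable. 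It therefore belongs to $L^\infty_{-k_\sigma}\subset L^\infty_{-p}$, and $\mathcal{R}_\sigma^*$ is a bounded linear map with norm at most $C'$.

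\textbf{Step 2: duality identity.} Take $f\in L^1_p(\mathbb{R}^d)$. The left-hand side of the duality relation equals
$$\int_{\mathbb{S}^{d-1}\times\mathbb{R}}\int_{\mathbb{R}^d} f(x)\,\sigma(\langle w,x\rangle-b)\,dx\,d\mu(w,b).$$
This is legitimate because $\mathcal{R}_\sigma f\in C_{0,p}$ by the preceding proposition, and the pairing with $\mu$ is the Lebesgue integral. The main obstacle is justifying Fubini for the product measure $dx\otimes d|\mu|$, since $|\mu|$ is only a weighted finite measure. The bound from Step 1 gives
$$\iint |f(x)|\,|\sigma(\langle w,x\rangle-b)|\,dx\,d|\mu|\le C'\|f\|_{L^1_{k_\sigma}}\|\mu\|_{\mathrm{TV},p}<\infty.$$
Measurability of the integrand follows from joint continuity of $(x,w,b)\mapsto\sigma(\langle w,x\rangle-b)$. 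Fubini--Tonelli (for the $\sigma$-finite measures $dx$ and $|\mu|$, using the Jordan decomposition of $\mu$) then allows the integrals to be swapped. This yields $\int f(x)\,\mathcal{R}_\sigma^*\mu(x)\,dx$, which is the right-hand side.

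\textbf{Step 3: uniqueness.} Suppose another $G\in L^\infty_{-p}$ satisfies the same relation for all $f\in L^1_p$. Then $\int f(\mathcal{R}_\sigma^*\mu-G)\,dx=0$ for all such $f$. Choosing $f=\mathbf{1}_K\,\overline{\mathrm{sgn}}(\mathcal{R}_\sigma^*\mu-G)$ for compact sets $K$, or equivalently invoking the isometric duality $(L^1_p)^*=L^\infty_{-p}$ via the weight $(1+|x|)^p$, shows that $G=\mathcal{R}_\sigma^*\mu$ almost everywhere. This gives the ``uniquely defined'' and ``almost everywhere'' parts of the statement.
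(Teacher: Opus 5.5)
Your proposal is correct and follows essentially the same route as the paper. Both use the growth bound $|\sigma(\langle w,x\rangle-b)|\le C'(1+|x|)^{k_\sigma}(1+|b|)^{k_\sigma}$ to justify Fubini--Tonelli, swap the integrals, and read off $\mathcal{R}_\sigma^*\mu$ together with $|\mathcal{R}_\sigma^*\mu(x)|\le C'(1+|x|)^{k_\sigma}\|\mu\|_{\mathrm{TV},p}$; you additionally make explicit the continuity of $\mathcal{R}_\sigma^*\mu$ and the a.e.\ uniqueness via testing against $\mathbf{1}_K$ times a sign, which the paper leaves to ``direct identification.''
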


\begin{proof}
Let $f \in L^1_{p}(\mathbb{R}^d)$. The duality action on the cylinder expands to:
\begin{equation*}
    \langle \mathcal{R}_\sigma f, \mu \rangle = \int_{\mathbb{S}^{d-1} \times \mathbb{R}} \left( \int_{\mathbb{R}^d} f(x) \sigma(\langle w, x \rangle - b) \, dx \right) d\mu(w, b).
\end{equation*}
To justify Fubini-Tonelli, we evaluate the absolute integral using the structural upper bound of $\sigma$:
\begin{align*}
    \int \int |f(x)| |\sigma(\langle w, x \rangle - b)| \, dx \, d|\mu|(w,b) &\le C' \int \int |f(x)| (1+|x|)^{k_\sigma} (1+|b|)^{k_\sigma} \, dx \, d|\mu| \\
    &\le C' \left( \int_{\mathbb{R}^d} |f(x)| (1+|x|)^p \, dx \right) \left( \int (1+|b|)^p \, d|\mu| \right) \\
    &= C' \|f\|_{L^1_{W_p}} \|\mu\|_{\text{TV}, W_p} < \infty.
\end{align*}
Since absolute integrability is strictly verified, we commute the integrals:
\begin{equation*}
    \langle \mathcal{R}_\sigma f, \mu \rangle = \int_{\mathbb{R}^d} f(x) \left[ \int_{\mathbb{S}^{d-1} \times \mathbb{R}} \sigma(\langle w, x \rangle - b) \, d\mu(w, b) \right] dx.
\end{equation*}
By direct identification, we extract the spatial function $\mathcal{R}_\sigma^* \mu(x)$. Furthermore, we naturally obtain the bound $|\mathcal{R}_\sigma^* \mu(x)| \le C'(1+|x|)^{k_\sigma} \|\mu\|_{\text{TV}, W_p}$, proving that the network output behaves exactly with growth $k_\sigma$.
\end{proof}

\subsection{Representability and the Barron Space via Weighted Total Variation}
We will need the following assumption on $g$, which we assume to hold throughout this section:

\begin{hypothesis}\label{hyp:non_zero_g}
In addition to Hypothesis \ref{hyp:complete_hyp_sigma}, we assume that the regularized spectrum $\widehat{g}$ does not vanish away from the origin, \textit{i.e.},
$$
\forall \rho \in \mathbb{R} \setminus \{0\}, \quad \widehat{g}(\rho) \neq 0,
$$
and that if $\widehat{g}(0) = 0$, its order of vanishing at $\rho = 0$ is strictly less than $\alpha + d - 1$.
\end{hypothesis}
\begin{remark}[Scope of the Non-Vanishing Spectral Condition]
While the non-vanishing condition $\widehat{g}(\rho) \neq 0$ excludes certain specific activations such as the Triangle function (B-spline of order 1) discussed after Hypothesis \ref{hyp:complete_hyp_sigma}---whose spectrum exhibits periodic zeros---it is trivially satisfied by a broad class of standard deep learning activations. Indeed, as shown previously, widely used activations such as the Rectified Linear Unit (ReLU), Leaky ReLU, the Heaviside step function, and the Absolute Value function all yield regularized spectra $\widehat{g}(\rho)$ that are strictly non-zero constants across the entire real line. Consequently, this hypothesis imposes no practical restriction on standard neural network architectures.
\end{remark}

Under this hypothesis, the Fourier transform of $H_w$ given by \eqref{eq:def_hw} is defined for all $f \in L^1_p(\mathbb{R}^d)$ (which implies that $\widehat{f}$ is a bounded $C^p$ function) in the sense of tempered distributions. 

\begin{definition}[Extension of the Filtered Analysis Operator to $L_p^1$]\label{def:def_Hw_Mp}
We say that the global composite distribution $(- \Delta_b)^{\alpha} \mathcal{T}_g *_b \mathcal{R}_\sigma f$ belongs to the space of weighted Radon measures $\mathcal{M}_{p}(\mathbb{S}^{d-1} \times \mathbb{R})$ if there exists a measure $\mu \in \mathcal{M}_{p}(\mathbb{S}^{d-1} \times \mathbb{R})$ such that for all joint test functions $\phi \in \mathcal{S}(\mathbb{S}^{d-1} \times \mathbb{R})$:
$$
\int_{\mathbb{S}^{d-1}} \langle H_w, \phi(w, \cdot) \rangle_{\mathbb{R}} \, \underline{dw} = \int_{\mathbb{S}^{d-1} \times \mathbb{R}} \phi(w, b) \, d\mu(w, b)
$$
where $H_w := (- \Delta_b)^{\alpha} \mathcal{T}_g *_b \mathcal{R}_\sigma f(w,\cdot) \in \mathcal{S}'(\mathbb{R})$ is the fiber distribution defined spectrally in \eqref{eq:def_hw}.
\end{definition}

In order to prove the main theorem of this section, we need to disintegrate $(-\Delta_b)^\alpha \mathcal{T}_g *_b \mathcal{R}_\sigma f \in \mathcal{M}_{p}(\mathbb{S}^{d-1} \times \mathbb{R})$ with respect to the Lebesgue measure on the sphere $\mathbb{S}^{d-1}$.

\begin{lemma}[Fiber Disintegration of the Measure]\label{lem:disintegration_mu}
Let $f \in L_p^1(\mathbb{R}^d)$ and assume that the composite filtered analysis distribution, denoted globally as $\mu := (-\Delta_b)^\alpha \mathcal{T}_g *_b \mathcal{R}_\sigma f$, satisfies the condition $\mu \in \mathcal{M}_{p}(\mathbb{S}^{d-1} \times \mathbb{R})$. 

Then, the global Radon measure $\mu$ admits a strict geometric disintegration with respect to the uniform spherical Lebesgue measure $\underline{dw}$. Specifically, there exists a $\underline{dw}$-almost everywhere uniquely defined family of 1D signed Radon measures $\{\mu_w\}_{w \in \mathbb{S}^{d-1}}$ on $\mathbb{R}$ such that:
\begin{equation}
    d\mu(w,b) = d\mu_w(b) \, \underline{dw}.
\end{equation}
Furthermore, for $\underline{dw}$-almost every orientation $w \in \mathbb{S}^{d-1}$, the fiber measure $\mu_w$ has finite weighted total variation, meaning $\mu_w \in \mathcal{M}_p(\mathbb{R})$.
\end{lemma}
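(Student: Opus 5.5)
The approach is to show that the angular marginal of $\mu$ is absolutely continuous with respect to $\underline{dw}$, and then apply the classical disintegration theorem for Radon measures on the product $\mathbb{S}^{d-1} \times \mathbb{R}$.

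\textbf{Step 1: positive finite measures.} I would write the Jordan decomposition $\mu = \mu^+ - \mu^-$. Since $\|\mu\|_{\text{TV},p} < \infty$ and $W_p \geq 1$, the measures $\nu^\pm := W_p \, \mu^\pm$ are finite positive Radon measures on the Polish space $\mathbb{S}^{d-1} \times \mathbb{R}$.

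\textbf{Step 2: absolute continuity of the angular marginal (the main obstacle).} Let $\pi(w,b) = w$ be the projection onto the sphere. I must show that $\pi_\# |\mu| \ll \underline{dw}$. For this I would use the identity of Definition \ref{def:def_Hw_Mp} with separated test functions $\phi(w,b) = a(w)\chi(b)$, where $a \in C^\infty(\mathbb{S}^{d-1})$ and $\chi \in \mathcal{S}(\mathbb{R})$. This gives
\[
\int_{\mathbb{S}^{d-1}} a(w)\, \langle H_w, \chi \rangle_{\mathbb{R}} \, \underline{dw} = \int a(w)\chi(b)\, d\mu(w,b).
\]
The left-hand side is the integral of $a$ against the function $w \mapsto \langle H_w,\chi\rangle$. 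Fixing $\chi$, the measure $\pi_\#(\chi\,\mu)$ therefore has density $\langle H_w,\chi\rangle$ with respect to $\underline{dw}$. The density is measurable in $w$ because $\widehat{H_w}$ depends measurably (indeed continuously, for $\rho\neq0$) on $w$ through $\widehat f(\rho w)$.

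Now let $A \subset \mathbb{S}^{d-1}$ be $\underline{dw}$-null. Approximating $\mathbf{1}_A$ by smooth $a$ (dominated convergence, using that $\mu$ is finite) gives $\mu(A\times\cdot)$ tested against every $\chi$ equal to $0$. Hence the restriction $\mu|_{A\times\mathbb{R}}$ is the zero measure, and so $|\mu|(A\times\mathbb{R}) = 0$.

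This is exactly the delicate point. It relies on the left-hand side being an honest integral against $\underline{dw}$, and I would spell out the measurability of $w \mapsto \langle H_w,\chi\rangle$ carefully. I would also check that integrability follows from the identity itself, since the right-hand side is bounded by $\|\chi\|_\infty \|\mu\|_{\text{TV}}$.

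\textbf{Step 3: disintegration.} By the disintegration theorem (Rokhlin, or Ambrosio–Fusco–Pallara Theorem 2.28) applied to $\nu^\pm$, there exist probability kernels $\kappa^\pm_w$ with $d\nu^\pm = \kappa^\pm_w(db)\, d(\pi_\#\nu^\pm)(w)$. By Step 2, $\pi_\#\nu^\pm \ll \pi_\#|\mu|\ll \underline{dw}$. Writing $\pi_\#\nu^\pm = m^\pm(w)\,\underline{dw}$ with $m^\pm \in L^1(\underline{dw})$ (Radon–Nikodym), I would define
\[
\mu_w := W_p^{-1}\big(m^+(w)\kappa^+_w - m^-(w)\kappa^-_w\big).
\]
This yields $d\mu = d\mu_w\,\underline{dw}$.

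\textbf{Step 4: uniqueness.} Almost-everywhere uniqueness follows from the uniqueness in the disintegration theorem, tested against a countable separating family of functions $a(w)\chi(b)$.

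\textbf{Step 5: finite weighted variation of the fibers.} By Tonelli,
\[
\int_{\mathbb{S}^{d-1}} \int W_p\, d|\mu_w|\,\underline{dw} \le \int (m^+ + m^-)\,\underline{dw} = 2\|\mu\|_{\text{TV},p} < \infty.
\]
Actually, using the mutual singularity of $\mu^\pm$ the bound is $\|\mu\|_{\text{TV},p}$, but either bound suffices. It follows that $\int W_p\, d|\mu_w| < \infty$ for $\underline{dw}$-a.e. $w$, that is, $\mu_w\in\mathcal{M}_p(\mathbb{R})$.
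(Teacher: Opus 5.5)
Your proposal is correct and follows essentially the same route as the paper's proof: absolute continuity of the angular marginal $\pi_\#|\mu|$, obtained by testing the identity of Definition~\ref{def:def_Hw_Mp} against angular cutoffs, then the disintegration theorem, then Tonelli for $W_p\,d|\mu|$; your Riesz identification of $\pi_\#(\chi\mu)$ and your explicit Jordan/weighted construction of $\mu_w$ are only technical refinements of what the paper cites. The one step to firm up is the measurability and integrability of $w \mapsto \langle H_w,\chi\rangle$, which the bound on the right-hand side does not supply: either treat it as built into Definition~\ref{def:def_Hw_Mp}, or prove it as the paper does by writing $\langle H_w,\chi\rangle = \frac{1}{2\pi}\int_{\mathbb{R}} T(\rho)\,\widehat{f}(\rho w)\,\widehat{\chi}(-\rho)\,d\rho$, where $\widehat{H_w}(\rho) = T(\rho)\widehat{f}(\rho w)$ and $T$ is continuous of polynomial growth under Hypothesis~\ref{hyp:non_zero_g}, so that dominated convergence (with $\|\widehat{f}\|_\infty \le \|f\|_{L^1}$) makes it continuous, hence bounded, on $\mathbb{S}^{d-1}$.
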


\begin{proof}
We divide the proof into three distinct analytical steps.
\paragraph{Step 1: Absolute continuity of the marginal variation measure with respect to $\underline{dw}$.}
Let $\pi : \mathbb{S}^{d-1} \times \mathbb{R} \to \mathbb{S}^{d-1}$ denote the canonical projection $\pi(w, b) = w$, and let $\lambda := \pi_*|\mu|$ be the push-forward measure representing the marginal of the total variation on $\mathbb{S}^{d-1}$. We must establish that $\lambda$ is absolutely continuous with respect to the uniform spherical Lebesgue measure, i.e., $\lambda \ll \underline{dw}$.

Let $K \subset \mathbb{S}^{d-1}$ be an arbitrary compact subset such that $\underline{dw}(K) = 0$. By the outer regularity of the Lebesgue measure on $\mathbb{S}^{d-1}$, there exists a decreasing sequence of open neighborhoods $U_n \supset K$ satisfying $\lim_{n \to \infty} \underline{dw}(U_n) = 0$. Using smooth mollification, construct a sequence of cutoff functions $\alpha_n \in C^\infty(\mathbb{S}^{d-1})$ such that $\mathbf{1}_K \le \alpha_n \le \mathbf{1}_{U_n}$ and $0 \le \alpha_n(w) \le 1$ for all $w \in \mathbb{S}^{d-1}$.

Let $\Psi \in \mathcal{S}(\mathbb{S}^{d-1} \times \mathbb{R})$ be an arbitrary joint test function. Testing the global measure $\mu$ against the sequence of truncated test functions $\Phi_n(w,b) := \alpha_n(w)\Psi(w,b)$ yields, by Definition~\ref{def:def_Hw_Mp}:
\begin{equation}\label{eq:action_phin_step1}
    \int_{\mathbb{S}^{d-1} \times \mathbb{R}} \alpha_n(w) \Psi(w,b) \, d\mu(w,b) = \int_{\mathbb{S}^{d-1}} \alpha_n(w) \langle H_w, \Psi(w, \cdot) \rangle_{\mathbb{R}} \, \underline{dw}.
\end{equation}

To control the right-hand side, we study the scalar angular mapping $F_\Psi : \mathbb{S}^{d-1} \to \mathbb{C}$ defined by $F_\Psi(w) := \langle H_w, \Psi(w, \cdot) \rangle_{\mathbb{R}}$.We reformulate this duality bracket in the 1D frequency domain:
\begin{equation*}
    F_\Psi(w) = \frac{1}{2\pi} \left\langle \widehat{H_w}, \, \mathcal{F}_b\{\Psi(w, \cdot)\}(-\cdot) \right\rangle_{\mathbb{R}},
\end{equation*}
where $\mathcal{F}_b\{\Psi(w, \cdot)\}(\rho) = \int_{\mathbb{R}} \Psi(w,b) e^{-i\rho b} \, db \in \mathcal{S}(\mathbb{S}^{d-1} \times \mathbb{R})$.
Recalling the spectral definition of $H_w$ from \eqref{eq:def_hw}, we isolate the direction-independent distribution $T \in \mathcal{S}'(\mathbb{R})$:
\begin{equation*}
    T(\rho) := \text{FV} \left( (i\rho)^\alpha \frac{|\mathbb{S}^{d-1}|}{2(2\pi)^{d-1}} \frac{|\rho|^{d-1}}{\widehat{g}(\rho)} \right).
\end{equation*}
Thus, $F_\Psi(w) = \frac{1}{2\pi} \langle T, \theta_w \rangle_{\mathbb{R}}$, where $\theta_w(\rho) := \widehat{f}(\rho w) \cdot \mathcal{F}_b\{\Psi(w, \cdot)\}(-\rho)$. 
Here, by Hypothesis~\ref{hyp:non_zero_g}, the regularized spectrum $\widehat{g}(\rho)$ is non-zero on $\mathbb{R} \setminus \{0\}$, and its potential vanishing order at $\rho = 0$ is strictly less than $\alpha + d - 1$. Since the numerator vanishes at $\rho = 0$ with order $\alpha + d - 1 \ge 1$, the origin singularity is strictly removable. Consequently, the Hadamard finite value operator $\text{FV}$ is superfluous, and 
\begin{equation*}
    T(\rho) := \frac{|\mathbb{S}^{d-1}|}{2(2\pi)^{d-1}} \frac{(i\rho)^\alpha |\rho|^{d-1}}{\widehat{g}(\rho)}
\end{equation*}
defines a regular continuous function on $\mathbb{R}$.
\begin{equation*}
    T(\rho) := \frac{|\mathbb{S}^{d-1}|}{2(2\pi)^{d-1}} \frac{(i\rho)^\alpha |\rho|^{d-1}}{\widehat{g}(\rho)}
\end{equation*}
is a function on $\mathbb{R}$. Furthermore, by the moderate high-frequency decay condition on $\widehat{g}$, $T(\rho)$ exhibits at most polynomial growth at infinity. Thus, $T$ is a regular distribution of order $0$, and the duality bracket $\langle T, \theta_w \rangle_{\mathbb{R}}$ is well-defined as a standard convergent Lebesgue integral $\int_{\mathbb{R}} T(\rho) \theta_w(\rho) \, d\rho$.

We claim that $F_\Psi$ is continuous on the compact sphere $\mathbb{S}^{d-1}$. Since $T$ is an order-$0$ distribution, $F_\Psi(w)$ is expressed as a parametric Lebesgue integral:
\begin{equation*}
    F_\Psi(w) = \frac{1}{2\pi} \int_{\mathbb{R}} T(\rho) \theta_w(\rho) \, d\rho,
\end{equation*}
where $\theta_w(\rho) = \widehat{f}(\rho w) \cdot \mathcal{F}_b\{\Psi(w, \cdot)\}(-\rho)$. The continuity of $w \mapsto F_\Psi(w)$ follows directly from the standard theorem on the continuity of parametric integrals:
\begin{enumerate}
    \item \textbf{Pointwise continuity:} For almost every $\rho \in \mathbb{R}$, the mapping $w \mapsto T(\rho) \theta_w(\rho)$ is continuous on $\mathbb{S}^{d-1}$ since $\widehat{f} \in C^0(\mathbb{R}^d)$ and $\Psi \in \mathcal{S}(\mathbb{S}^{d-1} \times \mathbb{R})$.
    \item \textbf{Integrable domination:} Since $T(\rho)$ has at most polynomial growth of order $k \in \mathbb{N}$ (i.e., $|T(\rho)| \le C_1 (1+|\rho|)^k$) and $\|\widehat{f}\|_{L^\infty} \le \|f\|_{L^1} < \infty$, we bound the integrand pointwise for all $w \in \mathbb{S}^{d-1}$:
    \begin{equation*}
        |T(\rho) \theta_w(\rho)| \le C_1 \|\widehat{f}\|_{L^\infty} (1+|\rho|)^k \sup_{w' \in \mathbb{S}^{d-1}} \left| \mathcal{F}_b\{\Psi(w', \cdot)\}(-\rho) \right|.
    \end{equation*}
    Because $\Psi \in \mathcal{S}(\mathbb{S}^{d-1} \times \mathbb{R})$, the envelope function $g(\rho) := \sup_{w' \in \mathbb{S}^{d-1}} |\mathcal{F}_b\{\Psi(w', \cdot)\}(-\rho)|$ belongs to $\mathcal{S}(\mathbb{R})$ and decays rapidly as $|\rho| \to \infty$. Thus, the bounding function $\rho \mapsto C_1 \|\widehat{f}\|_{L^\infty} (1+|\rho|)^k g(\rho)$ is independent of $w$ and strictly integrable over $\mathbb{R}$.
\end{enumerate}
By the Dominated Convergence Theorem for parametric integrals, $F_\Psi$ is continuous on $\mathbb{S}^{d-1}$. Hence, since $F_\Psi \in C^0(\mathbb{S}^{d-1})$ on the compact manifold $\mathbb{S}^{d-1}$, it is globally bounded by $M_\Psi := \sup_{w \in \mathbb{S}^{d-1}} |F_\Psi(w)| < \infty$.

We can now apply the Cauchy--Schwarz inequality in $L^2(\mathbb{S}^{d-1}, \underline{dw})$ to the integral in \eqref{eq:action_phin_step1}:
\begin{align*}
    \left| \int_{\mathbb{S}^{d-1}} \alpha_n(w) F_\Psi(w) \, \underline{dw} \right| &\le \left( \int_{\mathbb{S}^{d-1}} |\alpha_n(w)|^2 \, \underline{dw} \right)^{1/2} \left( \int_{\mathbb{S}^{d-1}} |F_\Psi(w)|^2 \, \underline{dw} \right)^{1/2} \\
    &\le \left( \int_{\mathbb{S}^{d-1}} \mathbf{1}_{U_n}(w) \, \underline{dw} \right)^{1/2} M_\Psi = M_\Psi \sqrt{\underline{dw}(U_n)}.
\end{align*}
Taking the limit $n \to \infty$ gives $\lim_{n \to \infty} \int_{\mathbb{S}^{d-1} \times \mathbb{R}} \alpha_n(w)\Psi(w,b) \, d\mu(w,b) = 0$.

On the other hand, since $\alpha_n(w)\Psi(w,b) \to \mathbf{1}_K(w)\Psi(w,b)$ pointwise and is dominated by the $|\mu|$-integrable function $|\Psi(w,b)|$, Lebesgue's Dominated Convergence Theorem implies:
\begin{equation*}
    \int_{K \times \mathbb{R}} \Psi(w,b) \, d\mu(w,b) = 0 \quad \forall \Psi \in \mathcal{S}(\mathbb{S}^{d-1} \times \mathbb{R}).
\end{equation*}
By the density of $\mathcal{S}$ in $C_{0,p}$, the restriction of $\mu$ to the cylinder $K \times \mathbb{R}$ is identically zero. By the duality definition of total variation, its norm vanishes: $|\mu|(K \times \mathbb{R}) = 0$. Therefore, $\lambda(K) = \pi_*|\mu|(K) = |\mu|(K \times \mathbb{R}) = 0$. This rigorously proves that $\lambda \ll \underline{dw}$.

\paragraph{Step 2: Application of the Radon Disintegration Theorem.}
Because the marginal measure of the total variation is absolutely continuous with respect to the uniform spherical Lebesgue measure ($\pi_*|\mu| \ll \underline{dw}$), the classical Disintegration Theorem for Radon measures applies. This theorem guarantees the existence of a $\underline{dw}$-almost everywhere uniquely defined family of 1D signed Radon measures $\{\mu_w\}_{w \in \mathbb{S}^{d-1}}$ on $\mathbb{R}$ such that for any Borel measurable function $\phi \in L^1(| \mu |)$:
\begin{equation*}
    \int_{\mathbb{S}^{d-1} \times \mathbb{R}} \phi(w,b) \, d\mu(w,b) = \int_{\mathbb{S}^{d-1}} \left( \int_{\mathbb{R}} \phi(w,b) \, d\mu_w(b) \right) \underline{dw}.
\end{equation*}
This identity is formally and geometrically expressed in the sense of measures by:
\begin{equation*}
    d\mu(w,b) = d\mu_w(b) \, \underline{dw}.
\end{equation*}

\paragraph{Step 3: Boundedness of the weighted total variation on the fibers.}
A fundamental property of the disintegration theorem establishes that the total variation of the global measure decomposes symmetrically: $d|\mu|(w,b) = d|\mu_w|(b) \, \underline{dw}$. We test this identity against the continuous, strictly positive polynomial weight function $W_p(b) = (1 + |b|)^p$. Since $\mu \in \mathcal{M}_p(\mathbb{S}^{d-1} \times \mathbb{R})$, $W_p$ is strictly $|\mu|$-integrable. Applying the Fubini-Tonelli theorem for non-negative functions yields:
\begin{equation*}
    \|\mu\|_{\text{TV}, p} = \int_{\mathbb{S}^{d-1} \times \mathbb{R}} (1+|b|)^p \, d|\mu|(w,b) = \int_{\mathbb{S}^{d-1}} \left( \int_{\mathbb{R}} (1+|b|)^p \, d|\mu_w|(b) \right) \underline{dw} < \infty.
\end{equation*}
The global integral of a non-negative measurable function with respect to $\underline{dw}$ is finite if and only if the integrand itself is finite $\underline{dw}$-almost everywhere. Consequently, there exists a Borel subset of full measure $\Omega \subseteq \mathbb{S}^{d-1}$ (satisfying $\underline{dw}(\mathbb{S}^{d-1} \setminus \Omega) = 0$) such that for every orientation $w \in \Omega$:
\begin{equation*}
    \|\mu_w\|_{\text{TV}, p} = \int_{\mathbb{R}} (1+|b|)^p \, d|\mu_w|(b) < \infty.
\end{equation*}
This finiteness strictly fulfills the definition of the 1D weighted space, establishing that $\mu_w \in \mathcal{M}_p(\mathbb{R})$ for $\underline{dw}$-almost every orientation $w \in \mathbb{S}^{d-1}$. The proof is complete.
\end{proof}

In fact, the disintegrated measure $\mu_w$ coincides with $H_w$ almost everywhere:

\begin{lemma}[Almost Everywhere Coincidence of Fiber Measures and Distributions]\label{lem:identification_spectrale_mu}
Let $\mu \in \mathcal{M}_p(\mathbb{S}^{d-1} \times \mathbb{R})$ be the global  measure associated with the composite analysis operator, and let $\{\mu_w\}_{w \in \mathbb{S}^{d-1}}$ be its disintegrated fiber measures satisfying $d\mu(w,b) = d\mu_w(b) \, \underline{dw}$ as established in Lemma \ref{lem:disintegration_mu}. Let $H_w \in \mathcal{S}'(\mathbb{R})$ be the directionally parameterized tempered distribution defined in Definition \ref{def:def_Hw_Mp}.

Then, for $\underline{dw}$-almost every orientation $w \in \mathbb{S}^{d-1}, 
    \mu_w = H_w \text{ in } \mathcal{S}'(\mathbb{R})
$
\end{lemma}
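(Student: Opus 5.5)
The plan is to test the defining identity of Definition~\ref{def:def_Hw_Mp} against \emph{separable} test functions $\phi(w,b) = a(w)\,\psi(b)$, with $a \in C^\infty(\mathbb{S}^{d-1})$ and $\psi \in \mathcal{S}(\mathbb{R})$. The disintegration of Lemma~\ref{lem:disintegration_mu} then turns both sides into angular integrals weighted by $a$. Concretely, for such $\phi$ the left-hand side is $\int_{\mathbb{S}^{d-1}} a(w)\, F_\psi(w)\,\underline{dw}$, where $F_\psi(w) := \langle H_w, \psi\rangle_{\mathbb{R}}$. Applying Step~2 of Lemma~\ref{lem:disintegration_mu} to the $|\mu|$-integrable function $a\otimes\psi$ (it is bounded by $\|a\|_\infty \|\psi\|_\infty$, and $|\mu|$ is finite), the right-hand side is $\int_{\mathbb{S}^{d-1}} a(w)\, G_\psi(w)\, \underline{dw}$, where $G_\psi(w) := \int_{\mathbb{R}} \psi\, d\mu_w$. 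Both functions are in $L^1(\underline{dw})$: $F_\psi$ is continuous by the argument in Step~1 of Lemma~\ref{lem:disintegration_mu}, and $|G_\psi(w)| \le \|\psi\|_\infty \|\mu_w\|_{\mathrm{TV}}$, which is integrable by Step~3. Since $C^\infty(\mathbb{S}^{d-1})$ is dense in $C^0(\mathbb{S}^{d-1})$, and the measure $(F_\psi - G_\psi)\,\underline{dw}$ annihilates every continuous function, we get $F_\psi(w) = G_\psi(w)$ for all $w$ outside a null set $N_\psi$.

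The next step removes the dependence of the null set on $\psi$. I would choose a countable subset $\mathcal{D} = \{\psi_j\}_{j\in\mathbb{N}} \subset \mathcal{S}(\mathbb{R})$ that is dense for the Schwartz topology; this exists because $\mathcal{S}(\mathbb{R})$ is a separable Fréchet space. Then set $N := \bigcup_j N_{\psi_j} \cup (\mathbb{S}^{d-1}\setminus\Omega)$, where $\Omega$ is the full-measure set from Step~3 of Lemma~\ref{lem:disintegration_mu} on which $\mu_w \in \mathcal{M}_p(\mathbb{R})$. This $N$ is still $\underline{dw}$-null. For $w \notin N$, the two tempered distributions $H_w$ and $\mu_w$ agree on every $\psi_j$. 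Here $\mu_w$ is tempered because its weighted total variation is finite, so it acts continuously on $\mathcal{S}(\mathbb{R})$ through the sup norm. Both are continuous linear functionals on $\mathcal{S}(\mathbb{R})$, so by density they agree on all of $\mathcal{S}(\mathbb{R})$. This gives $\mu_w = H_w$ in $\mathcal{S}'(\mathbb{R})$ for every $w \in \mathbb{S}^{d-1}\setminus N$.

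I expect the main obstacle to be checking that $w \mapsto F_\psi(w) = \langle H_w,\psi\rangle$ is measurable and integrable for a fixed $\psi$ independent of $w$. If $\widehat{H_w}$ still carried a genuine Hadamard finite value this would be delicate. Under Hypothesis~\ref{hyp:non_zero_g}, however, Step~1 of Lemma~\ref{lem:disintegration_mu} already shows that $\widehat{H_w}(\rho) = T(\rho)\widehat f(\rho w)$, with $T$ continuous and of polynomial growth. Hence $F_\psi(w) = \frac{1}{2\pi}\int T(\rho)\widehat f(\rho w)\widehat\psi(-\rho)\,d\rho$ is continuous in $w$ by dominated convergence, and this is the same computation as in that lemma with $\Psi(w,b) = \psi(b)$. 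A second, minor point is that separable functions $a\otimes\psi$ need not be admissible test functions in $\mathcal{S}(\mathbb{S}^{d-1}\times\mathbb{R})$. They are, since $a$ is smooth on a compact manifold, so Definition~\ref{def:def_Hw_Mp} applies directly.
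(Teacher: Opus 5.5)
Your proposal is correct and follows essentially the same route as the paper. Both test against separable functions $A(w)\psi(b)$ to get $\langle H_w,\psi\rangle=\langle\mu_w,\psi\rangle$ for $\underline{dw}$-a.e.\ $w$ with $\psi$ fixed, then use a countable dense subset of $\mathcal{S}(\mathbb{R})$ and continuity of both functionals to obtain a single null set. Your version is slightly more careful than the paper's: you only claim $w\mapsto\int\psi\,d\mu_w$ is in $L^1(\underline{dw})$, rather than continuous as the paper loosely asserts, and you conclude via uniqueness in the Riesz representation.
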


\begin{proof}
According to Lemma \ref{lem:disintegration_mu}, there exists a full-measure set $\Omega_0 \subset \mathbb{S}^{d-1}$ (i.e., $\int_{\mathbb{S}^{d-1} \setminus \Omega_0} \underline{dw} = 0$) such that for all $w \in \Omega_0$, the fiber measure $\mu_w$ belongs to $\mathcal{M}_p(\mathbb{R})$. Hence $\mu_w$ is a regular tempered distribution. Consequently, its 1D Fourier transform $\widehat{\mu}_w \in \mathcal{S}'(\mathbb{R})$ is unambiguously defined by standard duality for all $w \in \Omega_0$.

We must now prove that the distribution $\mu_w$ identifies strictly with $H_w$. By the global definition of the functional representing the composite operator, its action on any joint test function $\phi \in \mathcal{S}(\mathbb{S}^{d-1} \times \mathbb{R})$ evaluates to:
\begin{equation*}
    \langle \mu, \phi \rangle = \int_{\mathbb{S}^{d-1}} \langle H_w, \phi(w, \cdot) \rangle_{\mathbb{R}} \, \underline{dw}.
\end{equation*}

Simultaneously, evaluating this exact same action using the geometric disintegration of the measure yields:
\begin{equation*}
    \langle \mu, \phi \rangle = \int_{\mathbb{S}^{d-1} \times \mathbb{R}} \phi(w,b) \, d\mu(w,b) = \int_{\mathbb{S}^{d-1}} \langle \mu_w, \phi(w, \cdot) \rangle_{\mathbb{R}} \, \underline{dw}.
\end{equation*}

Equating these two representations, we obtain that for any joint test function $\phi$:
\begin{equation}\label{eq:integral_diff_zero}
    \int_{\mathbb{S}^{d-1}} \left( \langle H_w, \phi(w, \cdot) \rangle_{\mathbb{R}} - \langle \mu_w, \phi(w, \cdot) \rangle_{\mathbb{R}} \right) \underline{dw} = 0.
\end{equation}

To isolate the action on the fibers, we choose a separable test function of the form $\phi(w,b) = A(w)\psi(b)$, where $A \in C^\infty(\mathbb{S}^{d-1})$ is an arbitrary smooth angular function and $\psi \in \mathcal{S}(\mathbb{R})$ is a fixed 1D spatial test function. Substituting this into \eqref{eq:integral_diff_zero} gives:
\begin{equation*}
    \int_{\mathbb{S}^{d-1}} A(w) \Big( \langle H_w, \psi \rangle_{\mathbb{R}} - \langle \mu_w, \psi \rangle_{\mathbb{R}} \Big) \underline{dw} = 0.
\end{equation*}

By the fundamental lemma of the calculus of variations generalized to the sphere, since this integral vanishes for any smooth function $A$, the continuous integrand must be zero almost everywhere. Thus, for this specific test function $\psi$, there exists a full-measure set $\Omega_\psi \subset \mathbb{S}^{d-1}$ such that for all $w \in \Omega_\psi$:
\begin{equation*}
    \langle H_w, \psi \rangle_{\mathbb{R}} = \langle \mu_w, \psi \rangle_{\mathbb{R}}.
\end{equation*}

However, the null set $\mathbb{S}^{d-1} \setminus \Omega_\psi$ depends inherently on the choice of $\psi$. To claim that the distributions are identical, the equality must hold for all $\psi \in \mathcal{S}(\mathbb{R})$ simultaneously on a single full-measure set. 
To rigorously resolve this, we exploit the fact that the Fréchet space $\mathcal{S}(\mathbb{R})$ is separable. Let $\{\psi_k\}_{k \in \mathbb{N}}$ be a countable dense subset of $\mathcal{S}(\mathbb{R})$. For each $k$, there exists a full-measure set $\Omega_k$ where $\langle H_w, \psi_k \rangle_{\mathbb{R}} = \langle \mu_w, \psi_k \rangle_{\mathbb{R}}$.

We construct the countable intersection $\Omega^* = \Omega_0 \cap \left( \bigcap_{k \in \mathbb{N}} \Omega_k \right)$. Since a countable union of measure-zero sets has measure zero, the intersection $\Omega^*$ remains a set of full measure ($\int_{\mathbb{S}^{d-1} \setminus \Omega^*} \underline{dw} = 0$).

For any fixed orientation $w \in \Omega^*$, the linear functionals $H_w$ and $\mu_w$ coincide on the dense subset $\{\psi_k\}$. Because both $H_w$ and $\mu_w$ are continuous on $\mathcal{S}(\mathbb{R})$, their equality uniquely extends to the entire space $\mathcal{S}(\mathbb{R})$. Therefore, we strictly have:
\begin{equation*}
    \mu_w = H_w \quad \text{in } \mathcal{S}'(\mathbb{R}) \text{ for } \underline{dw}\text{-almost every } w.
\end{equation*}
The proof is complete.
\end{proof}

Now we are able to state the main theorem of this section.

\begin{theorem}[Topological Characterization of the Generalized Barron Space]\label{th:carac_gen_barron_mp}
Assume that $d \geq k_\sigma + 2$. Let $f : \mathbb{R}^d \to \mathbb{R}$ be a function in $L_p^1(\mathbb{R}^d)$ with $p>k_\sigma$ and assume Hypothesis \ref{hyp:non_zero_g} to be true. 

The function $f$ admits an exact neural representation $f(x) = \mathcal{R}_\sigma^* \mu(x)$ generated by a finite weighted canonical measure $\mu \in \mathcal{M}_{p}(\mathbb{S}^{d-1} \times \mathbb{R})$ if the composite analysis distribution $(-\Delta_b)^\alpha \mathcal{T}_g *_b \mathcal{R}_\sigma f \in \mathcal{M}_{p}(\mathbb{S}^{d-1} \times \mathbb{R})$. 
\end{theorem}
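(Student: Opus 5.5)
Write $\mu := (-\Delta_b)^\alpha \mathcal{T}_g *_b \mathcal{R}_\sigma f \in \mathcal{M}_p(\mathbb{S}^{d-1}\times\mathbb{R})$ for the assumed measure. I would prove $\mathcal{R}_\sigma^*\mu = f$ in $\mathcal{S}'(\mathbb{R}^d)$ and then upgrade this to almost-everywhere equality. Both sides are locally integrable: $\mathcal{R}_\sigma^*\mu \in L^\infty_{-k_\sigma}$ by the weighted adjoint proposition, and $f\in L^1_p$. Hence distributional equality gives equality almost everywhere.

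\textbf{Step 1: reduce to fibers.} Fix a real test function $\phi\in\mathcal{S}(\mathbb{R}^d)$. Since $\mathcal{S}\subset L^1_p$, the weighted duality relation of the adjoint proposition gives
\[
\langle \mathcal{R}_\sigma^*\mu,\phi\rangle=\int \mathcal{R}_\sigma\phi \, d\mu .
\]
By Lemma \ref{lem:disintegration_mu} this equals $\int_{\mathbb{S}^{d-1}}\int_{\mathbb{R}} \mathcal{R}_\sigma\phi(w,b)\,d\mu_w(b)\,\underline{dw}$. Writing $\mathcal{R}_\sigma\phi(w,b)=\int\sigma(s-b)\psi_w(s)\,ds$ and using Fubini exactly as in the proof of Theorem \ref{th:main_th_distrib}, with $|\sigma(s-b)|\le C(1+|s|)^{k_\sigma}(1+|b|)^{k_\sigma}$ and $p>k_\sigma$, the inner integral becomes $\langle v_w,\psi_w\rangle$. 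Here $v_w=\mu_w *_b\sigma$, and $\mu_w\in\mathcal{M}_p(\mathbb{R})$ for a.e.\ $w$. Lemma \ref{lem:spectral_convolution_identity} then gives $\widehat{v_w}=\widehat{\mu_w}\,\widehat\sigma$.

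\textbf{Step 2: identify the spectrum.} By Lemma \ref{lem:identification_spectrale_mu}, $\mu_w=H_w$ for a.e.\ $w$, so $\widehat{\mu_w}(\rho)=T(\rho)\widehat f(\rho w)$. Here $T$ is the continuous, polynomially bounded function from the proof of Lemma \ref{lem:disintegration_mu}; it is regular thanks to Hypothesis \ref{hyp:non_zero_g}. I then want
\[
\widehat{\mu_w}\,\widehat\sigma=\frac{|\mathbb{S}^{d-1}|}{2(2\pi)^{d-1}}|\rho|^{d-1}\widehat f(\rho w),
\]
which amounts to cancelling $(i\rho)^\alpha\widehat\sigma$ against $\widehat g$.

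\textbf{Main obstacle.} This cancellation is the step I expect to be hardest. The trouble is that $\widehat\sigma$ may carry singular parts (Dirac derivatives, principal values) at $\rho=0$, while $\widehat g$ is smooth. I would split with a cutoff near the origin. Away from $0$, $\widehat\sigma=\widehat g/(i\rho)^\alpha$ as distributions and the product is a genuine product of functions. Near $0$, I would use that $\widehat{\mu_w}=\widehat{H_w}$ vanishes to order $\alpha+d-1-\mathrm{ord}_0\widehat g\ge1$ and is $C^p$ with bounded derivatives, since $\mu_w\in\mathcal{M}_p$. Because $p>k_\sigma$ bounds the order of $\widehat\sigma$, I would need to show that all point-supported terms of $\widehat\sigma$ at $0$ are annihilated by this vanishing. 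This is where I expect the dimension hypothesis $d\ge k_\sigma+2$ to enter: the factor $|\rho|^{d-1}$ supplies at least $k_\sigma+1$ orders of vanishing, which should exceed the order of the Dirac derivatives in $\widehat\sigma$ at the origin (of order at most $k_\sigma$ for growth $k_\sigma$). I would first try to verify this by testing against $\varphi$ and Taylor-expanding $\widehat{\mu_w}\varphi$ at $0$.

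\textbf{Step 3: conclude.} With the regular product in hand, Parseval gives
\[
\langle v_w,\psi_w\rangle=\frac{|\mathbb{S}^{d-1}|}{2(2\pi)^d}\int|\rho|^{d-1}\widehat f(\rho w)\overline{\widehat\phi(\rho w)}\,d\rho,
\]
using the projection-slice identity $\widehat{\psi_w}(\rho)=\widehat\phi(\rho w)$. Integrating over $w$ and changing to polar coordinates $\xi=\rho w$, exactly as in Theorem \ref{th:main_th_distrib}, yields $(2\pi)^{-d}\int\widehat f\,\overline{\widehat\phi}=\langle f,\phi\rangle$. Since $f\in L^1$, $\widehat f$ is bounded, so the polar integral converges absolutely and Plancherel applies. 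This gives $\mathcal{R}_\sigma^*\mu=f$.
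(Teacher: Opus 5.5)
\paragraph{Overall route and where the gap lies.} Your outline follows the paper's proof step for step: disintegration via Lemma~\ref{lem:disintegration_mu}, Fubini using $p>k_\sigma$, Lemma~\ref{lem:spectral_convolution_identity}, the identification $\mu_w=H_w$ from Lemma~\ref{lem:identification_spectrale_mu}, then projection--slice, polar coordinates and Plancherel. The paper handles the origin cancellation by citing Theorem~\ref{th:associativity_distrib} with $|\rho|^{d-1}\in C^{k_\sigma}$; you instead use a cutoff and a vanishing-order count. That count is where the gap is.

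$\widehat{H_w}$ is divided by $\widehat g$, so its vanishing order at $0$ is $\alpha+d-1-\mathrm{ord}_0\widehat g$, not ``at least $k_\sigma+1$ from $|\rho|^{d-1}$''. Hypothesis~\ref{hyp:non_zero_g} only guarantees this order is $\ge 1$. Once $\mathrm{ord}_0\widehat g>\alpha$, the Dirac part of $\widehat\sigma$ can survive, and the step cannot be closed: the conclusion for the canonical $\mu$ is then false.

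\paragraph{Counterexample.} Take $d=3$, $\sigma(t)=t+te^{-|t|}$ (so $k_\sigma=1$) and $\alpha=2$.
\begin{itemize}
\item $\widehat\sigma=2\pi i\,\delta'-4i\rho(1+\rho^2)^{-2}$ and $\widehat g(\rho)=4i\rho^3(1+\rho^2)^{-2}$.
\item $\widehat g$ is smooth, nonzero off the origin, decays like $4/|\rho|$, and vanishes at $0$ to order $3<\alpha+d-1$. Also $d=k_\sigma+2$, so every hypothesis of the statement holds.
\item For a Gaussian $f$, $\widehat{H_w}(\rho)=\tfrac{ic_3}{4}\rho(1+\rho^2)^2\widehat f(\rho w)$, with $c_3=\tfrac{1}{2\pi}$. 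This is Schwartz, so $\mu\in\mathcal M_p$ for every $p$.
\item However $\widehat{H_w}'(0)=\tfrac{ic_3}{4}\widehat f(0)\neq 0$. Pairing with $2\pi i\delta'$ leaves $\widehat{v_w}=c_3|\rho|^{2}\widehat f(\rho w)+\tfrac{\pi c_3}{2}\widehat f(0)\,\delta$.
\item Hence $\mathcal R_\sigma^*\mu=f+\tfrac{1}{8\pi}\int_{\mathbb R^3}f$. You can confirm this directly: the linear part of $\sigma$ contributes the constant $-\int bH_w(b)\,db=\tfrac{c_3}{4}\widehat f(0)$.
\end{itemize}

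\paragraph{The paper fails at the same point.} Its multiplier $c_d|\rho|^{d-1}\widehat f(\rho w)/\widehat g(\rho)$ has a pole at $0$ in this example, so Theorem~\ref{th:associativity_distrib} does not apply. This is therefore a missing hypothesis, not something you overlooked in the paper.

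\paragraph{What would close the step.} Let $E$ be any extension of $\widehat g/(i\rho)^\alpha$ with $(i\rho)^\alpha E=\widehat g$. Then $\widehat\sigma-E=\sum_{j<\alpha}a_j\delta^{(j)}$. So what you need is that $\widehat{\mu_w}$ vanish to order $\alpha$ at the origin. This is exactly the moment condition \eqref{eq:origin_vanishing_moments}, which the paper itself imposes later in Lemma~\ref{lem:spectral_characterization_f}. It holds automatically when $\mathrm{ord}_0\widehat g\le d-1$, in particular whenever $\widehat g(0)\neq0$ (ReLU, leaky ReLU, Heaviside, $|t|$).

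Under such an assumption, write $\widehat{\mu_w}=(i\rho)^\alpha\Theta_w$ with $\Theta_w$ bounded near $0$. The Dirac terms then drop out, and your cutoff argument reduces the pairing with $E$ to $\int\widehat g\,\Theta_w\varphi$, as required.

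\paragraph{A smaller point.} $p>k_\sigma$ does not bound the order of $\widehat\sigma$: for ReLU, $\mathrm{fp}\,\rho^{-2}$ has order $2>k_\sigma$. What controls the argument is the vanishing order of $\widehat{\mu_w}$ at the origin.
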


\begin{proof}
Suppose that the composite distribution, which we denote by $\mu := (-\Delta_b)^\alpha \mathcal{T}_g *_b \mathcal{R}_\sigma f$, belongs to the space of weighted measures $\mathcal{M}_{p}(\mathbb{S}^{d-1} \times \mathbb{R})$.
Our goal is to demonstrate that $\mathcal{R}_\sigma^* \mu = f$ in the weak sense of distributions, which will imply equality almost everywhere since $f \in L_p^1(\mathbb{R}^d)$.
Let $\phi \in \mathcal{S}(\mathbb{R}^d)$ be an arbitrary test function. By definition of the dual operator, we have:
$$
\langle \mathcal{R}_\sigma^* \mu, \phi \rangle_{\mathbb{R}^d} = \langle \mu, \mathcal{R}_\sigma \phi \rangle_{\mathcal{M}_{p}, C_{0,p}} = \int_{\mathbb{S}^{d-1} \times \mathbb{R}} \mathcal{R}_\sigma \phi(w,b) \, d\mu(w,b).
$$

Using the disintegration lemma, we decompose this global integral over the parameter fibers $d\mu(w,b) = d\mu_w(b) \underline{dw}$:
$$
\langle \mathcal{R}_\sigma^* \mu, \phi \rangle_{\mathbb{R}^d} = \int_{\mathbb{S}^{d-1}} \left( \int_{\mathbb{R}} \mathcal{R}_\sigma \phi(w,b) \, d\mu_w(b) \right) \underline{dw}.
$$

Let us replace $\mathcal{R}_\sigma \phi$ by its explicit spatial definition on the fiber: $\mathcal{R}_\sigma \phi(w, b) = \int_{\mathbb{R}} \sigma(s-b) \psi_w(s) \, ds$ (via the change of variable $s = t+b$), where $\psi_w(s) = \int_{\langle w, x \rangle = s} \phi(x) \, d\mu_s(x) \in \mathcal{S}(\mathbb{R})$.
The action then writes:
$$
\langle \mathcal{R}_\sigma^* \mu, \phi \rangle_{\mathbb{R}^d} = \int_{\mathbb{S}^{d-1}} \left( \int_{\mathbb{R}} \int_{\mathbb{R}} \sigma(s-b) \psi_w(s) \, ds \, d\mu_w(b) \right) \underline{dw}.
$$

\textbf{1. Absolute convergence and Fubini-Tonelli} \\
To justify the inversion of the inner integrals (exactly as in the proof of Theorem \ref{th:main_th_distrib}), we show that the double integral is absolutely convergent.
Since $\sigma$ has polynomial growth of order $k_\sigma$, we have $|\sigma(s-b)| \le C(1+|s-b|)^{k_\sigma} \le C(1+|s|)^{k_\sigma}(1+|b|)^{k_\sigma}$. Thus:
$$
\int_{\mathbb{R}} \int_{\mathbb{R}} |\sigma(s-b) \psi_w(s)| \, ds \, d|\mu_w|(b) \leq C \left( \int_{\mathbb{R}} (1+|b|)^{k_\sigma} d|\mu_w|(b) \right) \left( \int_{\mathbb{R}} (1+|s|)^{k_\sigma} |\psi_w(s)| \, ds \right).
$$
Since $\psi_w \in \mathcal{S}(\mathbb{R})$, the second integral is finite. For the first one, as $\mu_w \in \mathcal{M}_p(\mathbb{R})$ (weighted growth of order $p$) and $p > k_\sigma$ by hypothesis, the integral is strictly bounded by $\|\mu_w\|_{\text{TV}, p} < \infty$.

\textbf{2. Isolation of the inner convolution} \\
Fubini's theorem allows us to rearrange the terms to isolate a 1D spatial convolution:
$$
\langle \mathcal{R}_\sigma^* \mu, \phi \rangle_{\mathbb{R}^d} = \int_{\mathbb{S}^{d-1}} \int_{\mathbb{R}} \psi_w(s) \left( \int_{\mathbb{R}} \sigma(s-b) \, d\mu_w(b) \right) ds \, \underline{dw}.
$$
Let us define this inner convolution as $v_w(s) := (\mu_w *_b \sigma)(s)$.
Because for almost every $w \in \mathbb{S}^{d-1}$, $\mu_w \in \mathcal{M}_p$ and $\sigma$ has polynomial growth $k_\sigma$, we can easily bound $|v_w(s)| \le C(1+|s|)^{k_\sigma} \|\mu_w\|_{\text{TV}, p}$. The continuous function $v_w$ is therefore a valid tempered distribution ($v_w \in \mathcal{S}'(\mathbb{R})$).
We can then use the 1D Fourier duality bracket:
$$
\langle \mathcal{R}_\sigma^* \mu, \phi \rangle_{\mathbb{R}^d} = \frac{1}{2\pi} \int_{\mathbb{S}^{d-1}} \langle \widehat{v_w}(\rho), \widehat{\psi_w}(-\rho) \rangle_\rho \, \underline{dw}.
$$

\textbf{3. Determination of the spectrum $\widehat{v_w}(\rho)$} \\
We claim that $\widehat{v_w}(\rho) = \frac{|\mathbb{S}^{d-1}|}{2(2\pi)^{d-1}} |\rho|^{d-1} \widehat{f}(\rho w)$.

Since $\mu_w \in \mathcal{M}_p(\mathbb{R})$ with $p > k_\sigma$, Lemma \ref{lem:spectral_convolution_identity} applies and establishes that:
\begin{equation*}
    \widehat{v_w}(\rho) = \widehat{\mu}_w(\rho) \widehat{\sigma}(\rho).
\end{equation*}

By Lemma \ref{lem:identification_spectrale_mu}, we know that for $\underline{dw}$-almost every $w$, $\widehat{\mu}_w = \widehat{H_w}$ in $\mathcal{S}'(\mathbb{R})$. We substitute the explicit definition of $\widehat{H_w}$ from equation \eqref{eq:def_hw}:
\begin{equation*}
    \widehat{v_w}(\rho) = \left( (i\rho)^{\alpha} \frac{|\mathbb{S}^{d-1}|}{2(2\pi)^{d-1}} \frac{|\rho|^{d-1}}{\widehat{g}(\rho)} \widehat{f}(\rho w) \right) \widehat{\sigma}(\rho).
\end{equation*}

Since $\widehat{g}(\rho) = (i\rho)^\alpha \widehat{\sigma}(\rho)$, the terms simplify exactly as in Theorem \ref{th:main_th_distrib}, yielding:
\begin{equation*}
    \widehat{v_w}(\rho) = \frac{|\mathbb{S}^{d-1}|}{2(2\pi)^{d-1}} |\rho|^{d-1} \widehat{f}(\rho w).
\end{equation*}

By Theorem \ref{th:associativity_distrib}, this distributional simplification across the origin is strictly valid provided the geometric multiplier $\rho \mapsto |\rho|^{d-1}$ is of class $C^{k_\sigma}(\mathbb{R})$, which is true since $d \ge k_\sigma + 2$ by hypothesis.

\textbf{4. Cancellation of singularities and final projection} \\
The projection-slice theorem ensures that $\widehat{\psi_w}(-\rho) = \overline{\widehat{\phi}(\rho w)}$. The integration over the sphere becomes a purely geometric Lebesgue integral:
$$
\langle \mathcal{R}_\sigma^* \mu, \phi \rangle_{\mathbb{R}^d} = \frac{|\mathbb{S}^{d-1}|}{2(2\pi)^d} \int_{\mathbb{S}^{d-1}} \int_{\mathbb{R}} |\rho|^{d-1} \widehat{f}(\rho w) \overline{\widehat{\phi}(\rho w)} \, d\rho \, \underline{dw}.
$$
The change to Cartesian coordinates $\xi = \rho w$ generates the Jacobian $d\rho \, \underline{dw} = \frac{2}{|\mathbb{S}^{d-1}|} \frac{d\xi}{\|\xi\|^{d-1}}$.
The radial singularity cancels out, yielding:
$$
\langle \mathcal{R}_\sigma^* \mu, \phi \rangle_{\mathbb{R}^d} = \frac{1}{(2\pi)^d} \int_{\mathbb{R}^d} \widehat{f}(\xi) \overline{\widehat{\phi}(\xi)} \, d\xi.
$$
By Plancherel's identity, this is strictly equal to $\langle f, \phi \rangle_{\mathbb{R}^d}$.
Since this equality holds for any test function $\phi \in \mathcal{S}(\mathbb{R}^d)$ and $f \in L_p^1(\mathbb{R}^d)$, we rigorously conclude that $f = \mathcal{R}_\sigma^* \mu$.

\end{proof}

\begin{remark}
The explicit reliance on the uniform spherical Lebesgue measure $\underline{dw}$ in Definition \ref{def:def_Hw_Mp} might initially appear arbitrary. However, the final step of the preceding proof highlights its structural necessity: without the integration with respect to $\underline{dw}$, we could not identify the polar integral
$$
\frac{|\mathbb{S}^{d-1}|}{2(2\pi)^d} \int_{\mathbb{S}^{d-1}} \int_{\mathbb{R}} |\rho|^{d-1} \widehat{f}(\rho w) \overline{\widehat{\phi}(\rho w)} \, d\rho \, \underline{dw}
$$
with its Cartesian counterpart
$$
\frac{1}{(2\pi)^d} \int_{\mathbb{R}^d} \widehat{f}(\xi) \overline{\widehat{\phi}(\xi)} \, d\xi,
$$
which is strictly required to conclude the proof via Plancherel's identity.
\end{remark}

In the next theorem, we prove that spectral Barron spaces verify our characterization from Theorem \ref{th:carac_gen_barron_mp}. The proof of this result is heavily inspired by the result from \cite{schavemaker2025barron} who proved that spectral Barron spaces are included in ADZ spaces.

\begin{lemma}[1D Sobolev--Fourier Embedding for Weighted $L^1$ Spaces]\label{lem:sobolev_fourier_1d}
Let $h \in L^1(\mathbb{R})$ and assume that its distributional Fourier transform $\widehat{h}$ belongs to the Sobolev space $W^{m,1}(\mathbb{R})$ for an integer $m \ge 2$. Then $h \in L^1_p(\mathbb{R})$ for any $0 \le p \le m-2$, and there exists a constant $C_{m,p} > 0$ depending only on $m$ and $p$ such that:
\[
\|h\|_{L^1_p(\mathbb{R})} = \int_{\mathbb{R}} (1+|b|)^p |h(b)| \, db \le C_{m,p} \|\widehat{h}\|_{W^{m,1}(\mathbb{R})}.
\]
\end{lemma}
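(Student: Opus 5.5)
The idea is to trade derivatives of $\widehat{h}$ in frequency for polynomial decay of $h$ in space. Integrating by parts $m$ times controls $|b|^m|h(b)|$ pointwise, which is integrable against $(1+|b|)^p$ as soon as $p \le m-2$.

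\textbf{Step 1 (pointwise bounds).} Since $h \in L^1(\mathbb{R})$, $\widehat{h}$ is continuous and bounded, and the inversion formula holds in $\mathcal{S}'(\mathbb{R})$. Because $\widehat{h} \in W^{m,1}(\mathbb{R}) \subset L^1(\mathbb{R})$, it is in fact pointwise:
\[
h(b) = \frac{1}{2\pi}\int_{\mathbb{R}} \widehat{h}(\rho) e^{i\rho b}\,d\rho ,
\]
valid for almost every $b$, and everywhere after choosing the continuous representative. This gives $\|h\|_{L^\infty} \le \frac{1}{2\pi}\|\widehat{h}\|_{L^1}$.

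Next, the distributional identity $\mathcal{F}\{(-ib)^m h\} = \partial_\rho^m \widehat{h}$ holds in $\mathcal{S}'(\mathbb{R})$, possibly up to the sign convention $(\pm i b)^m$. Since $\partial_\rho^m\widehat{h} \in L^1(\mathbb{R})$, inverting again gives
\[
|b|^m |h(b)| \le \frac{1}{2\pi}\|\partial_\rho^m \widehat{h}\|_{L^1}
\]
for almost every $b$. To justify this without assuming $b^m h$ is a priori tempered, I would argue directly: $\widehat{h} \in W^{m,1}$ implies that $\widehat{h},\dots,\partial^{m-1}\widehat{h}$ are absolutely continuous and vanish at $\pm\infty$. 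Integrating by parts $m$ times in $\int \widehat{h}(\rho)e^{i\rho b}\,d\rho$ for $b\neq 0$ then leaves no boundary terms and yields
\[
h(b) = \frac{1}{2\pi}\,(i b)^{-m}(-1)^m \int_{\mathbb{R}} \partial_\rho^m \widehat{h}(\rho)\, e^{i\rho b}\,d\rho .
\]
This is the step I expect to need the most care. The vanishing at infinity of $\partial^j\widehat{h}$ for $j<m$ is the point to check, and it follows since each such derivative lies in $W^{1,1}(\mathbb{R})$, hence is continuous, integrable and of bounded variation, so its limits at $\pm\infty$ exist and must be zero.

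\textbf{Step 2 (combining).} From the two bounds,
\[
|h(b)| \le \frac{1}{2\pi}\,\min\bigl(1,|b|^{-m}\bigr)\,\|\widehat{h}\|_{W^{m,1}} \le \frac{2^m}{2\pi}\,(1+|b|)^{-m}\,\|\widehat{h}\|_{W^{m,1}} .
\]
Therefore
\[
\int_{\mathbb{R}} (1+|b|)^p|h(b)|\,db \le \frac{2^m}{2\pi}\,\|\widehat{h}\|_{W^{m,1}} \int_{\mathbb{R}} (1+|b|)^{p-m}\,db .
\]
The last integral is finite exactly when $m-p>1$, which holds for $p \le m-2$. Its value is $2/(m-p-1)\le 2$, so we may take $C_{m,p} = 2^{m+1}/(2\pi)$.

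The argument in fact works for $p<m-1$; the stated restriction $p\le m-2$ is what is needed when $p$ is an integer.
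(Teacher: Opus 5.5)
Your proof is correct and follows the paper's argument. Both use the $L^\infty$ bound from Fourier inversion together with $|b|^m|h(b)|\le \frac{1}{2\pi}\|\widehat{h}^{(m)}\|_{L^1}$, then integrate against $(1+|b|)^p$; you merge the two bounds via $\min(1,|b|^{-m})\le 2^m(1+|b|)^{-m}$ where the paper uses $1+|b|^m$. Your integration-by-parts justification is valid but not needed: $h\in L^1\subset\mathcal{S}'(\mathbb{R})$ makes $b^m h$ automatically tempered, so $\partial_\rho^m\widehat{h}=\mathcal{F}\{(-ib)^m h\}$ holds in $\mathcal{S}'(\mathbb{R})$ directly.
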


\begin{proof}
Since $\widehat{h} \in W^{m,1}(\mathbb{R})$, its weak derivatives $\widehat{h}^{(k)}$ belong to $L^1(\mathbb{R})$ for all $0 \le k \le m$. By the Fourier inversion formula, $h(b) = \frac{1}{2\pi} \int_{\mathbb{R}} \widehat{h}(\rho) e^{i b \rho} \, d\rho$, which immediately yields:
\[
\|h\|_{L^\infty(\mathbb{R})} \le \frac{1}{2\pi} \|\widehat{h}\|_{L^1(\mathbb{R})}.
\]
Furthermore, for the $m$-th weak derivative, we have $\mathcal{F}^{-1}\{\widehat{h}^{(m)}\}(b) = (-ib)^m h(b)$. Applying the $L^\infty$ bound to $\widehat{h}^{(m)} \in L^1(\mathbb{R})$ gives:
\[
|b|^m |h(b)| \le \frac{1}{2\pi} \|\widehat{h}^{(m)}\|_{L^1(\mathbb{R})} \quad \forall b \in \mathbb{R}.
\]
Combining these two pointwise bounds, we obtain:
\[
(1+|b|^m) |h(b)| \le \frac{1}{2\pi} \left( \|\widehat{h}\|_{L^1(\mathbb{R})} + \|\widehat{h}^{(m)}\|_{L^1(\mathbb{R})} \right) \le \frac{1}{2\pi} \|\widehat{h}\|_{W^{m,1}(\mathbb{R})}.
\]
Finally, for any $p \le m-2$, the weight function $b \mapsto \frac{(1+|b|)^p}{1+|b|^m}$ is strictly integrable over $\mathbb{R}$. We can thus write:
\[
\int_{\mathbb{R}} (1+|b|)^p |h(b)| \, db = \int_{\mathbb{R}} \frac{(1+|b|)^p}{1+|b|^m} \Big( (1+|b|^m) |h(b)| \Big) \, db \le C_{m,p} \|\widehat{h}\|_{W^{m,1}(\mathbb{R})},
\]
where $C_{m,p} = \frac{1}{2\pi} \int_{\mathbb{R}} \frac{(1+|b|)^p}{1+|b|^m} \, db < \infty$. This completes the proof.
\end{proof}

\begin{theorem}[Rigorous Sobolev--Spectral Barron Embedding]\label{th:spectral_barron_embedding_fixed}
Let $f \in L^1_p(\mathbb{R}^d)$ with $p \ge 0$, and set $m =  p  + 2$. For almost every $w \in \mathbb{S}^{d-1}$, let $\widehat{H}_w$ denote the 1D fiber analysis spectrum defined globally on $\mathbb{R}$ by:
\[
\widehat{H}_w(\rho) = c_d \frac{(i\rho)^\alpha |\rho|^{d-1}}{\widehat{g}(\rho)} \widehat{f}(\rho w), \quad \text{where } c_d = \frac{|\mathbb{S}^{d-1}|}{2(2\pi)^{d-1}}.
\]
Assume that:
\begin{enumerate}
    \item For almost every $w \in \mathbb{S}^{d-1}$, $\widehat{H}_w \in W^{m,1}(\mathbb{R} \setminus \{0\})$, and satisfies
    \[
    \int_{\mathbb{S}^{d-1}} \|\widehat{H}_w\|_{W^{m,1}(\mathbb{R} \setminus \{0\})} \, d\underline{w} < \infty.
    \]
    \item \textbf{(Trace Compatibility at $\rho = 0$):} For almost every $w \in \mathbb{S}^{d-1}$, the one-sided traces of $\widehat{H}_w$ at the origin satisfy the matching conditions:
    \[
    \lim_{\rho \to 0^+} \widehat{H}_w^{(k)}(\rho) = \lim_{\rho \to 0^-} \widehat{H}_w^{(k)}(\rho) \quad \text{for all } k = 0, 1, \ldots, m-1.
    \]
\end{enumerate}
Then the filtered analysis distribution $\mu = (-\Delta_b)^\alpha \mathcal{T}_g *_b \mathcal{R}_\sigma f$ extends uniquely to a finite weighted Radon measure in $\mathcal{M}_p(\mathbb{S}^{d-1} \times \mathbb{R})$, and
\[
\|\mu\|_{\mathrm{TV},p} \le C \int_{\mathbb{S}^{d-1}} \|\widehat{H}_w\|_{W^{m,1}(\mathbb{R})} \, d\underline{w}.
\]
\end{theorem}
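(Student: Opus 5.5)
The plan is to build the measure fiberwise and then assemble it over the sphere. First I will remove the puncture at the origin. For almost every $w$, hypothesis 1 says $\widehat{H}_w \in W^{m,1}(\mathbb{R}\setminus\{0\})$. On each half-line the derivatives $\widehat{H}_w^{(k)}$, $k\le m-1$, are then absolutely continuous and have one-sided limits at $0$. The trace compatibility condition 2 says these limits coincide. Integrating by parts on $(-\infty,0)$ and $(0,\infty)$ separately, the boundary terms at $0$ cancel. Hence the distributional derivatives on $\mathbb{R}$ of order $\le m$ contain no Dirac masses at the origin, and they agree with the pointwise ones. So $\widehat{H}_w \in W^{m,1}(\mathbb{R})$, with $\|\widehat{H}_w\|_{W^{m,1}(\mathbb{R})} = \|\widehat{H}_w\|_{W^{m,1}(\mathbb{R}\setminus\{0\})}$.

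Second, I identify the fiber distribution $H_w$ with a function. By Hypothesis \ref{hyp:non_zero_g} the finite-value regularisation in \eqref{eq:def_hw} is superfluous, as in Step 1 of Lemma \ref{lem:disintegration_mu}. Thus $\widehat{H}_w$ coincides with the regular $L^1$ function above, and $H_w = \mathcal{F}^{-1}\widehat{H}_w$ is a bounded continuous function $h_w$. I apply Lemma \ref{lem:sobolev_fourier_1d} with $m = p+2$; this needs $p$ to be an integer, or else $m = \lceil p\rceil + 2$, which I will note. It gives $h_w \in L^1_p(\mathbb{R})$ and
\[
\int_{\mathbb{R}}(1+|b|)^p|h_w(b)|\,db \le C_{m,p}\|\widehat{H}_w\|_{W^{m,1}(\mathbb{R})}.
\]
The lemma as stated assumes $h\in L^1$ a priori. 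Here I use only its pointwise bound $(1+|b|^m)|h_w(b)| \le \frac{1}{2\pi}\|\widehat H_w\|_{W^{m,1}}$, which holds without that assumption, and integrability follows from it.

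Third, I assemble the measure $d\mu(w,b) := h_w(b)\,db\,\underline{dw}$. To make sense of this, I need joint measurability of $(w,b)\mapsto h_w(b)$. I get it from the formula
\[
h_w(b) = \frac{1}{2\pi}\int_{\mathbb{R}} c_d\,\frac{(i\rho)^\alpha|\rho|^{d-1}}{\widehat g(\rho)}\,\widehat f(\rho w)\,e^{ib\rho}\,d\rho .
\]
The integrand is jointly continuous in $(w,b,\rho)$ away from $\rho = 0$, since $\widehat f$ is continuous, and Fubini--Tonelli then gives measurability. Tonelli and hypothesis 1 give
\[
\|\mu\|_{\mathrm{TV},p} = \int_{\mathbb{S}^{d-1}}\int_{\mathbb{R}}(1+|b|)^p|h_w(b)|\,db\,\underline{dw} \le C_{m,p}\int_{\mathbb{S}^{d-1}}\|\widehat{H}_w\|_{W^{m,1}(\mathbb{R})}\,\underline{dw} < \infty .
\]
Finally, the identity required in Definition \ref{def:def_Hw_Mp} holds: for each $\phi \in \mathcal{S}(\mathbb{S}^{d-1}\times\mathbb{R})$, $\langle H_w,\phi(w,\cdot)\rangle = \int h_w(b)\phi(w,b)\,db$ for almost every $w$, and integrating in $w$ gives the required equality. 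Uniqueness follows because a measure in $\mathcal{M}_p$ is determined by its action on $\mathcal{S}$, which is dense in $C_{0,p}$.

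The step I expect to be the main obstacle is the first one: the removable-puncture argument. The issue is that $W^{m,1}(\mathbb{R}\setminus\{0\})$ functions may carry jump discontinuities at the origin in their derivatives, for example through $|\rho|^{d-1}$ when $d$ is even, and these would produce Dirac terms. I will handle it with the explicit one-sided integration by parts against $\varphi\in C_c^\infty(\mathbb{R})$, checking that the boundary contributions $\big[\widehat H_w^{(k)}(0^+)-\widehat H_w^{(k)}(0^-)\big]\varphi^{(j)}(0)$ vanish for all $k\le m-1$ by condition 2. This works because one-dimensional $W^{m,1}$ functions on a half-line have $C^{m-1}$ representatives up to the boundary.
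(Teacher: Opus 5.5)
Your proposal is correct and follows the same route as the paper: the trace-compatibility condition removes the puncture so that $\widehat{H}_w \in W^{m,1}(\mathbb{R})$ with the same norm, Lemma~\ref{lem:sobolev_fourier_1d} is applied fiberwise with $m = p+2$, and the resulting weighted $L^1$ bounds are integrated over $\mathbb{S}^{d-1}$. Your additions only tighten points the paper passes over: the explicit one-sided integration by parts, the observation that $m$ must be an integer, and building $\mu$ directly as $h_w(b)\,db\,\underline{dw}$ with joint measurability, instead of presupposing the fibers $\mu_w$ of Lemma~\ref{lem:disintegration_mu}, which already assumes $\mu \in \mathcal{M}_p$.
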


\begin{remark}[Trace Compatibility in the High-Dimensional Regime]\label{rmk:trace_high_dim}
At first glance, the trace compatibility requirement at the frequency origin (Condition 2 of Theorem~\ref{th:spectral_barron_embedding_fixed}) might appear to impose a restrictive structural constraint on the target function. However, in the high-dimensional regime relevant to modern machine learning, this condition is automatically satisfied.

To see why, observe that the 1D fiber analysis spectrum is modulated by the frequency quotient
\[
\widehat{H}_w(\rho) = c_d \frac{(i\rho)^\alpha |\rho|^{d-1}}{\widehat{g}(\rho)} \widehat{f}(\rho w).
\]
By Hypothesis~\ref{hyp:non_zero_g}, if $\widehat{g}(0) = 0$, its vanishing order $k_0$ is strictly less than $\alpha + d - 1$. Therefore, the quotient $\frac{(i\rho)^\alpha |\rho|^{d-1}}{\widehat{g}(\rho)}$ retains at least an algebraic root of order $\alpha + d - 1 - k_0 > 0$ at the origin.

By Leibniz's differentiation rule, any one-sided derivative of order $k$ of this quotient will retain at least one strictly positive power of $|\rho|$ as long as the differentiation order is strictly below its remaining algebraic degree, namely $k < d + \alpha - 1 - k_0$. Consequently, both left- and right-sided limits vanish identically at the origin:
\[
\lim_{\rho \to 0^+} \widehat{H}_w^{(k)}(\rho) = \lim_{\rho \to 0^-} \widehat{H}_w^{(k)}(\rho) = 0 \quad \text{for all } 0 \le k < d + \alpha - 1 - k_0.
\]
Since Condition 2 requires trace matching up to order $k = m - 1$, the compatibility condition is \textbf{unconditionally satisfied} ($0 = 0$) whenever the Sobolev differentiation order satisfies:
\[
m \le d + \alpha - 1 - k_0.
\]
In standard deep learning settings, the ambient dimension $d$ is typically very large ($d \gg 1$), whereas the polynomial weight growth order $p$ (where $m = p + 2$) remains moderate ($m \ll d$). In this high-dimensional regime, the spherical geometric factor $|\rho|^{d-1}$ naturally crushes any potential origin singularity. Therefore, Theorem~\ref{th:spectral_barron_embedding_fixed} applies without any practical restriction on the traces at $\rho = 0$.
\end{remark}

\begin{proof}
For almost every $w \in \mathbb{S}^{d-1}$, the 1D Fourier transform $\widehat{H}_w(\rho)$ of the analysis fiber distribution belongs to $W^{m,1}(\mathbb{R} \setminus \{0\})$ by Assumption 1.

To establish that $\widehat{H}_w \in W^{m,1}(\mathbb{R})$ across the origin $\rho = 0$, we verify the continuity of its weak derivatives up to order $m-1$. For any $0 \le k \le m-1$, Assumption 2 guarantees that:
\[
\lim_{\rho \to 0^+} \widehat{H}_w^{(k)}(\rho) = \lim_{\rho \to 0^-} \widehat{H}_w^{(k)}(\rho).
\]
Consequently, $\widehat{H}_w$ possesses no jump discontinuities at $\rho = 0$ up to order $m-1$, guaranteeing that its weak derivatives $\widehat{H}_w^{(k)}$ up to order $m$ contain no singular Dirac delta measures supported at $0$. This confirms that $\widehat{H}_w \in W^{m,1}(\mathbb{R})$, with:
\[
\|\widehat{H}_w\|_{W^{m,1}(\mathbb{R})} = \|\widehat{H}_w\|_{W^{m,1}(\mathbb{R} \setminus \{0\})}.
\]

Applying Lemma \ref{lem:sobolev_fourier_1d} to $H_w = \mathcal{F}^{-1}\{\widehat{H}_w\}$ with $m =  p  + 2$, we obtain that $H_w \in L^1_p(\mathbb{R})$ for almost every $w$, with the estimate:
\[
\|\mu_w\|_{\mathrm{TV},p} = \|H_w\|_{L^1_p(\mathbb{R})} \le C_{m,p} \|\widehat{H}_w\|_{W^{m,1}(\mathbb{R})}.
\]

Finally, integrating this estimate over the sphere $\mathbb{S}^{d-1}$ with respect to the normalized measure $d\underline{w}$ yields:
\[
\|\mu\|_{\mathrm{TV},p} = \int_{\mathbb{S}^{d-1}} \|\mu_w\|_{\mathrm{TV},p} \, d\underline{w} \le C_{m,p} \int_{\mathbb{S}^{d-1}} \|\widehat{H}_w\|_{W^{m,1}(\mathbb{R})} \, d\underline{w} < \infty.
\]
Hence $\mu \in \mathcal{M}_p(\mathbb{S}^{d-1} \times \mathbb{R})$, concluding the proof.
\end{proof}

\begin{remark}[Connection to the Classical Spectral Barron Space and the Unbounded Domain]
For a ReLU activation ($\alpha=2$ and $\widehat{g}(\rho) = 1$), the 1D fiber analysis spectrum simplifies on $\mathbb{R} \setminus \{0\}$ to $\widehat{H}_w(\rho) = - c_d \rho^2 |\rho|^{d-1} \widehat{f}(\rho w)$, where $c_d = \frac{|\mathbb{S}^{d-1}|}{2(2\pi)^{d-1}}$. Integrating the $L^1(\mathbb{R})$ norm of $\widehat{H}_w$ over the unit sphere $\mathbb{S}^{d-1}$ directly recovers the classical spectral Barron condition up to a dimensional constant:
\[
\int_{\mathbb{S}^{d-1}} \|\widehat{H}_w\|_{L^1(\mathbb{R})} \, d\underline{w} = \frac{1}{(2\pi)^{d-1}} \int_{\mathbb{R}^d} |\xi|^2 |\widehat{f}(\xi)| \, d\xi < \infty.
\]
While the classical Barron space requires only $L^1$ integrability of $\widehat{H}_w$, our embedding theorem imposes the stronger Sobolev condition $\widehat{H}_w \in W^{m,1}(\mathbb{R} \setminus \{0\})$ with $m =  p + 2 \ge 2$, along with trace matching conditions at $\rho=0$. 

This structural difference is the direct consequence of formulating the representation globally over the unbounded domain $\mathbb{R}^d$:
\begin{itemize}
    \item \textbf{Compact Domains ($L^\infty$ Control):} On a compact domain $\Omega \subset \mathbb{R}^d$, the neural network outputs remain bounded as long as the bias parameter density $H_w(b)$ is bounded in $L^\infty(\mathbb{R})$. The classical condition $\int_{\mathbb{R}^d} |\xi|^2 |\widehat{f}(\xi)| d\xi < \infty$ delivers precisely this $L^\infty$ bound on $H_w$ via the standard $L^1 \to L^\infty$ Fourier embedding $\mathcal{F}^{-1}: L^1(\mathbb{R}) \to L^\infty(\mathbb{R})$.
    
    \item \textbf{Unbounded Domain $\mathbb{R}^d$ ($L^1_p$ Control and Fourier Duality):} To extend the representation isometry globally to $\mathbb{R}^d$, the parameter density $H_w(b)$ must be strictly integrable along the bias fibers, i.e., $H_w \in L^1_p(\mathbb{R})$. By fundamental Fourier duality, \textit{decay at infinity in the spatial bias domain $b \in \mathbb{R}$ is strictly equivalent to Sobolev smoothness in the frequency domain $\rho \in \mathbb{R}$}. 
\end{itemize}
Consequently, the $m$ weak derivatives in $W^{m,1}$ and the trace compatibility conditions at $\rho=0$ represent the precise harmonic analysis cost required to guarantee that $H_w \in L^1_p(\mathbb{R})$ and prevent heavy-tailed, non-integrable bias distributions, ensuring that $\mu$ remains a finite Radon measure on the full parameter cylinder $\mathbb{S}^{d-1} \times \mathbb{R}$.
\end{remark}

\section{Even-Odd Decomposition of the Kernel and Existence of an Optimal Linear Operator}\label{sec:optimal_operator}

In our purely continuous framework, the generalized Barron space $\mathcal{B}$ is equipped with a quotient norm, defined by the infimum of the weighted total variation over all absolutely continuous representing measures (with respect to the spherical Lebesgue measure):
\begin{equation}
    \|f\|_{\mathcal{B}} := \inf \left\{ \|\mu\|_{\text{TV}, p} \mid \mu \in \mathcal{M}_{p}(\mathbb{S}^{d-1} \times \mathbb{R}), \ \mu = \mu_w \underline{dw}, \ \mathcal{R}_\sigma^* \mu = f \right\}.
\end{equation}

One might naturally seek an optimal \textbf{linear} analysis operator $P : \mathcal{B} \to \mathcal{M}_p$ that associates to each function $f$ a valid representing measure, thereby strictly preserving the topology (i.e., finding the exact measure that achieves the infimum of the quotient norm). 

\begin{lemma}[Spectral Characterization of Representability]\label{lem:spectral_characterization_f}
Let $f \in L^1_p(\mathbb{R}^d)$ with $p\geq k_\sigma + \alpha$ and let $\mu = \mu_w \underline{dw} \in \mathcal{M}_p(\mathbb{S}^{d-1} \times \mathbb{R})$. Suppose the regularized spectrum $\widehat{g}(\rho) = (i\rho)^\alpha \widehat{\sigma}(\rho)$  satisfies Hypothesis \ref{hyp:non_zero_g} and has a definite parity such that $\widehat{g}(-\rho) = (-1)^\beta \widehat{g}(\rho)$ (with $\beta \in \{0,1\}$). Let $\epsilon_\sigma = (-1)^{\alpha+\beta}$.

\begin{enumerate}
    \item \textbf{Necessity:} If the measure $\mu$ represents the function $f$ in the sense of distributions (i.e., $\mathcal{R}_\sigma^* \mu = f$), then its partial Fourier transform $\widehat{\mu}_w = \mathcal{F}_b\{\mu_w\} \in \mathcal{S}'(\mathbb{R})$ satisfies for almost every $w \in \mathbb{S}^{d-1}$:
    \begin{equation}\label{eq:spectral_kernel_cond}
        \widehat{\mu}_w(\rho) + \epsilon_\sigma \widehat{\mu}_{-w}(-\rho) = (i\rho)^\alpha \frac{|\mathbb{S}^{d-1}|}{(2\pi)^{d-1}} \frac{|\rho|^{d-1}}{\widehat{g}(\rho)} \widehat{f}(\rho w)
    \end{equation}
    in the sense of $\mathcal{S}'(\mathbb{R})$.

    \item \textbf{Sufficiency:} Conversely, if $\widehat{\mu}_w \in \mathcal{S}'(\mathbb{R})$ satisfies \eqref{eq:spectral_kernel_cond} and additionally satisfies the origin-vanishing moments condition for almost every $w \in \mathbb{S}^{d-1}$:
    \begin{equation}\label{eq:origin_vanishing_moments}
        \widehat{\mu}_w^{(k)}(0) = 0 \quad \text{for all } 0 \le k < \alpha,
    \end{equation}
    then $\mathcal{R}_\sigma^* \mu = f$ in the sense of distributions.
\end{enumerate}
\end{lemma}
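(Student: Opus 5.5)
The plan is to redo the computation of Theorem~\ref{th:carac_gen_barron_mp}, this time for an arbitrary representing measure $\mu=\mu_w\underline{dw}$, and then symmetrize over $w\mapsto -w$.

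Fix $\phi\in\mathcal{S}(\mathbb{R}^d)$ and write $\psi_w$ for its projection, as before. By the duality of $\mathcal{R}_\sigma^*$ on $\mathcal{M}_p$, the Fubini argument of Theorem~\ref{th:carac_gen_barron_mp} (valid since $p\ge k_\sigma$), and Lemma~\ref{lem:spectral_convolution_identity}, we get
\[
\langle \mathcal{R}_\sigma^*\mu,\phi\rangle=\frac{1}{2\pi}\int_{\mathbb{S}^{d-1}}\big\langle \widehat{\mu}_w\widehat{\sigma},\,\overline{\widehat{\phi}(\cdot\, w)}\big\rangle\,\underline{dw}.
\]
In the integral over the sphere I then replace $w$ by $-w$ and $\rho$ by $-\rho$. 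Since $\widehat{\phi}(\rho w)=\widehat{\phi}((-\rho)(-w))$, averaging the two expressions gives the symmetrized density $\tfrac12\big[\widehat{\mu}_w(\rho)\widehat{\sigma}(\rho)+\widehat{\mu}_{-w}(-\rho)\widehat{\sigma}(-\rho)\big]$. The parity assumption gives $\widehat{\sigma}(-\rho)=(-1)^{\alpha+\beta}\widehat{\sigma}(\rho)=\epsilon_\sigma\widehat{\sigma}(\rho)$ away from $0$, because $\widehat{\sigma}=\widehat{g}/(i\rho)^\alpha$. The pairing therefore becomes
\[
\frac{1}{4\pi}\int\big\langle \widehat{\sigma}(\rho)\,[\widehat{\mu}_w(\rho)+\epsilon_\sigma\widehat{\mu}_{-w}(-\rho)],\ \overline{\widehat{\phi}(\rho w)}\big\rangle\,\underline{dw}.
\]
On the other side, the polar change of variables $\xi=\rho w$ with the Jacobian used at the end of Theorem~\ref{th:main_th_distrib} writes $\langle f,\phi\rangle$ in the same polar form with density $c_d|\rho|^{d-1}\widehat f(\rho w)$.

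\textbf{Sufficiency.} Substitute \eqref{eq:spectral_kernel_cond} into the bracket. The factor $\widehat{\sigma}(\rho)(i\rho)^\alpha/\widehat{g}(\rho)$ should equal $1$, which is the cancellation of Theorem~\ref{th:main_th_distrib}. The delicate point is that $\widehat{\sigma}$ carries singular parts at the origin (derivatives of $\delta$ up to order about $\alpha$), so the product $\widehat{\sigma}\cdot\widehat{\mu}_w$ must be interpreted via Theorem~\ref{th:associativity_distrib}. The moment condition \eqref{eq:origin_vanishing_moments} is what kills these origin contributions. I would write $\widehat{\sigma}=\mathrm{FV}(\widehat{g}/(i\rho)^\alpha)+\sum_{k<\alpha}c_k\delta^{(k)}$, note that the $\delta^{(k)}$ terms paired against $\widehat{\mu}_w\overline{\widehat\phi}$ involve only $\widehat{\mu}_w^{(j)}(0)$ with $j\le k<\alpha$, and conclude by Leibniz that they vanish. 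The remaining product then equals the regular function $2c_d|\rho|^{d-1}\widehat f(\rho w)$, and Plancherel gives $\langle f,\phi\rangle$. Here $p\ge k_\sigma+\alpha$ ensures that $\widehat{\mu}_w$ is $C^{\alpha+k_\sigma}$ with bounded derivatives, so these manipulations are legitimate.

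\textbf{Necessity.} I expect this to be the main obstacle. From $\mathcal{R}_\sigma^*\mu=f$, the two polar integrals agree for every $\phi$. I would choose $\phi$ whose Fourier transform has the tensor form $\widehat{\phi}(\rho w)=\chi(\rho)A(w)$, supported away from $\rho=0$ (first for $\chi$ supported in $\rho>0$, then symmetrized). Such $\phi$ are Schwartz, and they separate the fibers, as in Lemma~\ref{lem:identification_spectrale_mu}. Testing against them identifies, on $\mathbb{R}\setminus\{0\}$ and for a.e.\ $w$, the quantity $\widehat{\sigma}\,[\widehat{\mu}_w+\epsilon_\sigma\widehat{\mu}_{-w}(-\cdot)]$ with $2c_d|\rho|^{d-1}\widehat f(\rho w)$. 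I then divide by $\widehat{\sigma}=\widehat{g}/(i\rho)^\alpha$, which is nonvanishing there by Hypothesis~\ref{hyp:non_zero_g}, and use the separability argument of Lemma~\ref{lem:identification_spectrale_mu} to get a single null set.

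Extending \eqref{eq:spectral_kernel_cond} across $\rho=0$ remains. Both sides are continuous functions: the left because $\mu_w\in\mathcal{M}_p$, the right because the vanishing order of $\widehat g$ at $0$ is below $\alpha+d-1$. So equality on $\mathbb{R}\setminus\{0\}$ implies equality in $\mathcal{S}'(\mathbb{R})$, and this continuity is the key step I would check carefully.
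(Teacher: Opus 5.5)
Your proposal is correct and follows essentially the same route as the paper. For necessity, both use antipodal symmetrization under $(w,\rho)\mapsto(-w,-\rho)$, Fourier-side tensor test functions supported away from $\rho=0$, division by the non-vanishing spectrum on $\mathbb{R}\setminus\{0\}$, and extension across the origin by continuity. For sufficiency, both use \eqref{eq:origin_vanishing_moments} to neutralize the origin-supported part of $\widehat{\sigma}$: you split off the $\delta^{(k)}$ terms explicitly, while the paper factors $\widehat{\mu}_w=(i\rho)^\alpha h_w$ and writes $\widehat{\mu}_w\widehat{\sigma}=h_w\widehat{g}$, which is the same mechanism.

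One point to state carefully: $\widehat{\sigma}$ has parity $\epsilon_\sigma$ only on $\mathbb{R}\setminus\{0\}$ (for ReLU, $\widehat{\sigma}(\rho)-\widehat{\sigma}(-\rho)$ is a nonzero multiple of $\delta'$). So your factored form $\widehat{\sigma}(\rho)\,[\widehat{\mu}_w(\rho)+\epsilon_\sigma\widehat{\mu}_{-w}(-\rho)]$ of the symmetrized pairing is valid only against test functions vanishing near the origin, or after the moment condition has removed the $\delta^{(k)}$ contributions. That is exactly how you use it, so the argument stands.
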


\begin{proof}
We divide the proof into two parts, establishing the necessity and sufficiency of the spectral condition.
\paragraph{Necessity ($\mathcal{R}_\sigma^{*} \mu = f \implies$ Spectral Condition):}

By definition, the measure $\mu$ represents $f$ if and only if  $\langle \mathcal{R}_\sigma^* \mu, \phi \rangle_{\mathbb{R}^d} = \langle f, \phi \rangle_{\mathbb{R}^d}$ for any spatial test function $\phi \in \mathcal{S}(\mathbb{R}^d)$.
By computation from the proof of Theorem \ref{th:carac_gen_barron_mp}:
\begin{equation*}
    \langle \mathcal{R}_\sigma^* \mu, \phi \rangle_{\mathbb{R}^d} = \frac{1}{2\pi} \int_{\mathbb{S}^{d-1}} \langle \widehat{v_w}(\rho), \widehat{\phi}(-\rho w) \rangle_\rho \, \underline{dw},
\end{equation*}
where the spatial function on the fiber $v_w(s) := (\mu_w *_b \sigma)(s)$ has the spectral identity $\widehat{v_w}(\rho) = \widehat{\mu}_w(\rho) \widehat{\sigma}(\rho)$ directly according to Lemma \ref{lem:spectral_convolution_identity}.

Simultaneously, we express the target action $\langle f, \phi \rangle_{\mathbb{R}^d}$ in the frequency domain using polar coordinates (setting $\xi = \rho w$ with $d\xi = |\mathbb{S}^{d-1}| \rho^{d-1} d\rho \, \underline{dw}$ for positive radii $\rho \in \mathbb{R}^+$):
\begin{align*}
    \langle f, \phi \rangle_{\mathbb{R}^d} &= \frac{1}{(2 \pi)^d} \int_{\mathbb{R}^d} \widehat{f}(\xi) \widehat{\phi}(-\xi) \, d\xi \\
    &= \frac{|\mathbb{S}^{d-1}|}{(2 \pi)^d} \int_{\mathbb{S}^{d-1}} \int_{\mathbb{R}^+} \rho^{d-1} \widehat{f}(\rho w) \widehat{\phi}(-\rho w) \, d\rho \, \underline{dw} \\
    &= \frac{|\mathbb{S}^{d-1}|}{2(2 \pi)^d} \int_{\mathbb{S}^{d-1}} \int_{\mathbb{R}} |\rho|^{d-1} \widehat{f}(\rho w) \widehat{\phi}(-\rho w) \, d\rho \, \underline{dw}.
\end{align*}

Equating the two actions and factoring out the constants yields:
\begin{equation*}
    \frac{1}{2\pi} \int_{\mathbb{S}^{d-1}} \left\langle \widehat{\mu}_w(\rho) \widehat{\sigma}(\rho) - \frac{|\mathbb{S}^{d-1}|}{2(2\pi)^{d-1}} |\rho|^{d-1} \widehat{f}(\rho w), \widehat{\phi}(-\rho w) \right\rangle_\rho \, \underline{dw} = 0.
\end{equation*}

By exploiting the central involution $(w, \rho) \mapsto (-w, -\rho)$, which leaves both the normalized spherical measure $\underline{dw}$ and the Lebesgue measure $d\rho$ invariant, we symmetrize the integral over the parameter cylinder:
\begin{equation}\label{eq:equation_nuw_f}
    \frac{1}{4\pi} \int_{\mathbb{S}^{d-1}} \left\langle \widehat{\mu}_w(\rho) \widehat{\sigma}(\rho) + \widehat{\mu}_{-w}(-\rho) \widehat{\sigma}(-\rho) - \frac{|\mathbb{S}^{d-1}|}{(2\pi)^{d-1}} |\rho|^{d-1} \widehat{f}(\rho w), \widehat{\phi}(-\rho w) \right\rangle_\rho \, \underline{dw} = 0.
\end{equation}

Let us define the distribution on the fibers:
\begin{equation*}
    G_w(\rho) := \widehat{\mu}_w(\rho) \widehat{\sigma}(\rho) + \widehat{\mu}_{-w}(-\rho) \widehat{\sigma}(-\rho) - \frac{|\mathbb{S}^{d-1}|}{(2\pi)^{d-1}} |\rho|^{d-1} \widehat{f}(\rho w).
\end{equation*}
Notice that $G_w$ possesses an inherent central inversion symmetry. Evaluating $G_{-w}(-\rho)$ yields:
\begin{align*}
    G_{-w}(-\rho) &= \widehat{\mu}_{-w}(-\rho) \widehat{\sigma}(-\rho) + \widehat{\mu}_{w}(\rho) \widehat{\sigma}(\rho) - \frac{|\mathbb{S}^{d-1}|}{(2\pi)^{d-1}} |-\rho|^{d-1} \widehat{f}(-\rho (-w)) \\
    &= \widehat{\mu}_w(\rho) \widehat{\sigma}(\rho) + \widehat{\mu}_{-w}(-\rho) \widehat{\sigma}(-\rho) - \frac{|\mathbb{S}^{d-1}|}{(2\pi)^{d-1}} |\rho|^{d-1} \widehat{f}(\rho w) = G_w(\rho).
\end{align*}

To rigorously isolate the fiber distributions, we introduce a global frequency test function $\Phi \in \mathcal{S}(\mathbb{R}^d)$ defined by separating the radial and angular components:
\begin{equation*}
    \Phi(\xi) = 
    \begin{cases} 
    A\left(\frac{\xi}{|\xi|}\right) \psi(|\xi|) & \text{if } \xi \neq 0, \\
    0 & \text{if } \xi = 0,
    \end{cases}
\end{equation*}
where $A \in C^\infty(\mathbb{S}^{d-1})$ is an arbitrary smooth angular function, and $\psi \in \mathcal{D}(\mathbb{R})$ is a symmetric radial test function (i.e., $\psi(\rho) = \psi(-\rho)$) whose support strictly excludes the origin ($\rho = 0$). Setting $\widehat{\phi}(-\xi) = \Phi(\xi)$, its evaluation strictly yields a symmetrized decoupled function on the fibers ($\Phi(\rho w) = A(w) \psi(\rho)$ for $\rho > 0$, and $A(-w) \psi(\rho)$ for $\rho < 0$).

Substituting this into our integral \eqref{eq:equation_nuw_f}, we split the integration domain over the radial variable:
\begin{equation*}
    \int_{\mathbb{S}^{d-1}} \left( \int_0^\infty G_w(\rho) A(w) \psi(\rho) \, d\rho + \int_{-\infty}^0 G_w(\rho) A(-w) \psi(-\rho) \, d\rho \right) \underline{dw} = 0
\end{equation*}
where the integral notation on $\rho$ is allowed since $0$ (the only potential singularity of $\widehat{\sigma}$) is outside the support of $\psi$. 

In the second term, we apply the change of variable $\rho \mapsto -\rho$ (with $\psi(-\rho) = \psi(\rho)$) and use the invariance of $\underline{dw}$ under the antipodal mapping $w \mapsto -w$. Moreover, recombining the two terms and utilizing the symmetry property $G_w(\rho) = G_{-w}(-\rho)$, we obtain:
\begin{equation*}
    \int_{\mathbb{S}^{d-1}} A(w) \left( \int_0^\infty 2 G_w(\rho) \psi(\rho) \, d\rho \right) \underline{dw} = 0.
\end{equation*}

Since this holds for any smooth angular function $A$, the inner integral must vanish for almost every $w \in \mathbb{S}^{d-1}$:
\begin{equation*}
    \int_0^\infty G_w(\rho) \psi(\rho) \, d\rho = 0.
\end{equation*}

Because $\psi$ acts as an arbitrary test function on the positive half-line, we conclude that $G_w = 0$ on $\mathbb{R}^{+\star}$. By the symmetry $G_w(\rho) = G_{-w}(-\rho)$, it immediately follows that $G_w = 0$ on $\mathbb{R}^{-\star}$ as well. Thus, we have rigorously deduced that:
\begin{equation*}
    G_w = 0 \quad \text{in } \mathcal{D}'(\mathbb{R} \setminus \{0\}).
\end{equation*}

To introduce the regularized spectrum $\widehat{g}(\rho) = (i\rho)^\alpha \widehat{\sigma}(\rho)$, we multiply the entire equation by $(i\rho)^\alpha$:
\begin{equation*}
    \widehat{\mu}_w(\rho) \widehat{g}(\rho) + \widehat{\mu}_{-w}(-\rho) (i\rho)^\alpha \widehat{\sigma}(-\rho) = (i\rho)^\alpha \frac{|\mathbb{S}^{d-1}|}{(2\pi)^{d-1}} |\rho|^{d-1} \widehat{f}(\rho w) \quad \text{in } \mathcal{D}'(\mathbb{R} \setminus \{0\}).
\end{equation*}

By noting that $(i\rho)^\alpha = (-1)^\alpha (-i\rho)^\alpha$, the second term contains $(-i\rho)^\alpha \widehat{\sigma}(-\rho) = \widehat{g}(-\rho)$. Thus:
\begin{equation*}
    \widehat{\mu}_w(\rho) \widehat{g}(\rho) + (-1)^\alpha \widehat{\mu}_{-w}(-\rho) \widehat{g}(-\rho) = (i\rho)^\alpha \frac{|\mathbb{S}^{d-1}|}{(2\pi)^{d-1}} |\rho|^{d-1} \widehat{f}(\rho w) \quad \text{in } \mathcal{D}'(\mathbb{R} \setminus \{0\}).
\end{equation*}

Given our hypothesis that $\widehat{g}$ has a definite parity $\widehat{g}(-\rho) = (-1)^\beta \widehat{g}(\rho)$, we can explicitly factor it out. Defining $\epsilon_\sigma = (-1)^{\alpha+\beta}$, we divide by $\widehat{g}(\rho)$  to strictly obtain:
\begin{equation*}
    \widehat{\mu}_w(\rho) + \epsilon_\sigma \widehat{\mu}_{-w}(-\rho) = (i\rho)^\alpha \frac{|\mathbb{S}^{d-1}|}{(2\pi)^{d-1}} \frac{|\rho|^{d-1}}{\widehat{g}(\rho)} \widehat{f}(\rho w) \quad \text{in } \mathcal{D}'(\mathbb{R} \setminus \{0\}).
\end{equation*}

Because $\mu_w \in \mathcal{M}_p(\mathbb{R})$ is a finite Radon measure for almost every $w \in \mathbb{S}^{d-1}$, its Fourier transform $\widehat{\mu}_w$ is a continuous function. Moreover, since $f \in L^1_p(\mathbb{R}^d)$ and $\widehat{g}$ can only admit a zero at zero of max order $d + \alpha -1$, the right-hand side is also a continuous function. Since two continuous functions that coincide on $\mathbb{R} \setminus \{0\}$ must coincide on all of $\mathbb{R}$ (precluding any singular Dirac distributions at the origin), the equality rigorously extends to the whole space:
\begin{equation*}
    \widehat{\mu}_w(\rho) + \epsilon_\sigma \widehat{\mu}_{-w}(-\rho) = (i\rho)^\alpha \frac{|\mathbb{S}^{d-1}|}{(2\pi)^{d-1}} \frac{|\rho|^{d-1}}{\widehat{g}(\rho)} \widehat{f}(\rho w) \quad \text{in } \mathcal{S}'(\mathbb{R}).
\end{equation*}

\paragraph{Sufficiency (Spectral Condition $\implies \mathcal{R}_\sigma^* \mu = f$):}
Assume that for almost every $w \in \mathbb{S}^{d-1}$, both the spectral equation \eqref{eq:spectral_kernel_cond} and the origin-vanishing moments condition \eqref{eq:origin_vanishing_moments} hold:
\begin{equation}\label{eq:suff_premise}
    \widehat{\mu}_w(\rho) + \epsilon_\sigma \widehat{\mu}_{-w}(-\rho) = (i\rho)^\alpha \frac{|\mathbb{S}^{d-1}|}{(2\pi)^{d-1}} \frac{|\rho|^{d-1}}{\widehat{g}(\rho)} \widehat{f}(\rho w) \quad \text{in } \mathcal{S}'(\mathbb{R}),
\end{equation}
and $\widehat{\mu}_w^{(k)}(0) = 0$ for all $0 \le k < \alpha$.

We first analyze the restriction to the open punctured line $\mathbb{R} \setminus \{0\}$. On this domain, the regularized spectrum $\widehat{g}(\rho) = (i\rho)^\alpha \widehat{\sigma}(\rho)$ is smooth and non-zero everywhere by hypothesis. Therefore, $\widehat{\sigma}(\rho) = (i\rho)^{-\alpha} \widehat{g}(\rho)$ is a well-defined, smooth function on $\mathbb{R} \setminus \{0\}$, and its parity satisfies pointwise for all $\rho \neq 0$:
\begin{equation*}
    \widehat{\sigma}(-\rho) = \frac{\widehat{g}(-\rho)}{(-i\rho)^\alpha} = \frac{(-1)^\beta \widehat{g}(\rho)}{(-1)^\alpha (i\rho)^\alpha} = (-1)^{\beta-\alpha} \widehat{\sigma}(\rho) = \epsilon_\sigma \widehat{\sigma}(\rho).
\end{equation*}

Since $\widehat{\sigma}(\rho)$ is smooth on $\mathbb{R} \setminus \{0\}$, multiplying equation \eqref{eq:suff_premise} by $\widehat{\sigma}(\rho)$ is strictly legal and associative in $\mathcal{D}'(\mathbb{R} \setminus \{0\})$. Using the identity $(i\rho)^\alpha \widehat{\sigma}(\rho) = \widehat{g}(\rho)$ and injecting the parity relation $\epsilon_\sigma \widehat{\sigma}(\rho) = \widehat{\sigma}(-\rho)$, the non-zero denominator cancels out perfectly on $\mathbb{R} \setminus \{0\}$:
\begin{equation}\label{eq:suff_punctured}
    \widehat{\mu}_w(\rho) \widehat{\sigma}(\rho) + \widehat{\mu}_{-w}(-\rho) \widehat{\sigma}(-\rho) = \frac{|\mathbb{S}^{d-1}|}{(2\pi)^{d-1}} |\rho|^{d-1} \widehat{f}(\rho w) \quad \text{in } \mathcal{D}'(\mathbb{R} \setminus \{0\}).
\end{equation}

We now extend this distributional equality from $\mathcal{D}'(\mathbb{R} \setminus \{0\})$ to the full tempered distribution space $\mathcal{S}'(\mathbb{R})$. Because two distributions in $\mathcal{S}'(\mathbb{R})$ that coincide on $\mathbb{R} \setminus \{0\}$ can differ at most by a finite linear combination of Dirac deltas supported at the origin, we must verify that no singular point-support terms $\sum_{k=0}^{N} c_k(w) \delta^{(k)}(\rho)$ appear when extending the equality to $\mathcal{S}'(\mathbb{R})$. 

By Lemma \ref{lem:disintegration_mu}, since $\mu_w \in \mathcal{M}_p(\mathbb{R})$ with $p \ge k_\sigma + \alpha$, its Fourier transform $\widehat{\mu}_w$ is a $C^p$ function on $\mathbb{R}$. Crucially, by condition \eqref{eq:origin_vanishing_moments}, $\widehat{\mu}_w(\rho)$ vanishes at the origin up to order $\alpha - 1$. By Taylor's theorem, we can factor out the root of multiplicity $\alpha$ explicitly: there exists a continuous function $h_w \in C^{p-\alpha}(\mathbb{R}) \subset C^{k_\sigma}(\mathbb{R})$ such that
\begin{equation*}
    \widehat{\mu}_w(\rho) = (i\rho)^\alpha h_w(\rho) \quad \text{for all } \rho \in \mathbb{R}.
\end{equation*}
Consequently, the distributional product $\widehat{\mu}_w(\rho) \widehat{\sigma}(\rho)$ in $\mathcal{S}'(\mathbb{R})$ simplifies globally across the origin thanks to Theorem \ref{th:associativity_distrib}:
\begin{equation*}
    \widehat{\mu}_w(\rho) \widehat{\sigma}(\rho) = h_w(\rho) \Big( (i\rho)^\alpha \widehat{\sigma}(\rho) \Big) = h_w(\rho) \widehat{g}(\rho).
\end{equation*}
Because $h_w$ is continuous and $\widehat{g} \in C^\infty(\mathbb{R})$, the product $\widehat{\mu}_w(\rho) \widehat{\sigma}(\rho)$ is a well-defined regular continuous function on the entire real line $\mathbb{R}$. Similarly, $\widehat{\mu}_{-w}(-\rho) \widehat{\sigma}(-\rho)$ is continuous on $\mathbb{R}$, and since $f \in L^1_p(\mathbb{R}^d)$, the right-hand side of \eqref{eq:suff_punctured} is also a continuous function on all of $\mathbb{R}$.

Since two continuous functions on $\mathbb{R}$ that coincide on $\mathbb{R} \setminus \{0\}$ must coincide everywhere without any singular Dirac delta discrepancy (i.e., $c_k(w) = 0$ for all $k$), the equality holds globally in $\mathcal{S}'(\mathbb{R})$:
\begin{equation*}
    \widehat{\mu}_w(\rho) \widehat{\sigma}(\rho) + \widehat{\mu}_{-w}(-\rho) \widehat{\sigma}(-\rho) = \frac{|\mathbb{S}^{d-1}|}{(2\pi)^{d-1}} |\rho|^{d-1} \widehat{f}(\rho w) \quad \text{in } \mathcal{S}'(\mathbb{R}).
\end{equation*}

Testing this identity against $\widehat{\phi}(-\rho w)$ for an arbitrary spatial test function $\phi \in \mathcal{S}(\mathbb{R}^d)$ and integrating over the parameter cylinder $\mathbb{S}^{d-1} \times \mathbb{R}$ via antipodal symmetrization exactly as in \eqref{eq:equation_nuw_f} yields:
\begin{equation*}
    \langle \mathcal{R}_\sigma^* \mu, \phi \rangle_{\mathbb{R}^d} = \frac{1}{(2\pi)^d} \int_{\mathbb{R}^d} \widehat{f}(\xi) \overline{\widehat{\phi}(\xi)} \, d\xi = \langle f, \phi \rangle_{\mathbb{R}^d}.
\end{equation*}
Since this holds unconditionally for any $\phi \in \mathcal{S}(\mathbb{R}^d)$, we conclude that $\mathcal{R}_\sigma^* \mu = f$ in the sense of distributions, completing the proof.
\end{proof}

Here we present a fundamental theorem giving the existence of a linear isometry between the Barron space and $\mathcal{M}_p(\mathbb{S}^{d-1} \times \mathbb{R})$ with $p \geq k_\sigma + \alpha$. 
\begin{theorem}[Existence of the Optimal Isometric Linear Operator]
For any activation $\sigma$ satisfying Hypothesis \ref{hyp:non_zero_g}  such that its regularized spectrum $\widehat{g}$ is purely even or purely odd, there strictly exists an isometric linear operator $P : \mathcal{B} \to \mathcal{M}_p(\mathbb{S}^{d-1} \times \mathbb{R})$ that acts as an optimal right inverse for $\mathcal{R}_\sigma^*$. This operator assigns to each function $f \in \mathcal{B}$ a valid representing measure that exactly achieves the infimum of the Barron norm:
\begin{equation}
    \|Pf\|_{\text{TV}, p} = \|f\|_{\mathcal{B}}.
\end{equation}
\end{theorem}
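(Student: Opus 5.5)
The plan is to symmetrize. Lemma \ref{lem:spectral_characterization_f} shows that representability only constrains the combination $\widehat{\mu}_w(\rho) + \epsilon_\sigma \widehat{\mu}_{-w}(-\rho)$. We therefore introduce the involution $J$ on $\mathcal{M}_p(\mathbb{S}^{d-1}\times\mathbb{R})$ given by $(J\mu)_w(b) := \epsilon_\sigma\, \mu_{-w}(-b)$, i.e.\ the pushforward of $\epsilon_\sigma\mu$ under $(w,b)\mapsto(-w,-b)$. On the Fourier side this reads $\widehat{(J\mu)}_w(\rho)=\epsilon_\sigma\widehat{\mu}_{-w}(-\rho)$. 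The map $J$ preserves $\underline{dw}$-absolute continuity of the angular marginal, it preserves the weight $W_p(b)=(1+|b|)^p$, and it is an isometry for $\|\cdot\|_{\mathrm{TV},p}$ with $J^2=\mathrm{Id}$.

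The first claim is that $\mathcal{R}_\sigma^*(J\mu)=\mathcal{R}_\sigma^*\mu$. We will verify this directly from the explicit synthesis formula of the Banach adjoint. The parity $\widehat{\sigma}(-\rho)=\epsilon_\sigma\widehat{\sigma}(\rho)$ on $\mathbb{R}\setminus\{0\}$, computed in the sufficiency part of the lemma, says that $\sigma(-t)-\epsilon_\sigma\sigma(t)$ has Fourier transform supported at the origin, so it is a polynomial $q$ of degree less than $\alpha$. Substituting $\sigma(-(\langle w,x\rangle-b)) = \epsilon_\sigma\sigma(\langle w,x\rangle-b)+q(\cdot)$ then gives the claimed equality up to the polynomial contribution. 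For the measures under consideration, the moment conditions \eqref{eq:origin_vanishing_moments} kill this contribution; on the Fourier side the same conclusion follows from the symmetric form of \eqref{eq:spectral_kernel_cond}.

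With this in hand, define the symmetric projection $\Pi\mu := \tfrac12(\mu+J\mu)$. Because $J$ is an isometry, $\|\Pi\mu\|_{\mathrm{TV},p}\le\|\mu\|_{\mathrm{TV},p}$, and $\Pi\mu$ still represents $f$. Symmetric representatives satisfy $\widehat{\mu}_w(\rho)=\epsilon_\sigma\widehat{\mu}_{-w}(-\rho)$, so \eqref{eq:spectral_kernel_cond} forces
\begin{equation*}
2\widehat{\mu}_w(\rho)= (i\rho)^\alpha \tfrac{|\mathbb{S}^{d-1}|}{(2\pi)^{d-1}}\tfrac{|\rho|^{d-1}}{\widehat g(\rho)}\widehat f(\rho w),
\end{equation*}
which says exactly that $\mu_w=H_w$, the canonical density of Definition \ref{def:def_Hw_Mp}. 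Consequently the symmetric representative of $f$ is unique, and it coincides with the canonical measure $(-\Delta_b)^\alpha\mathcal{T}_g*_b\mathcal{R}_\sigma f$.

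We then set $Pf := \Pi\mu$ for any admissible representative $\mu$ of $f$. This is well defined by the uniqueness just established. It is linear, because the canonical formula is linear in $\widehat f$, or equivalently because $\Pi$ is linear and symmetrization commutes with sums. It is a right inverse of $\mathcal{R}_\sigma^*$ by the invariance claim. It is optimal: for every admissible $\mu$ we have $\|Pf\|_{\mathrm{TV},p}=\|\Pi\mu\|_{\mathrm{TV},p}\le\|\mu\|_{\mathrm{TV},p}$, and taking the infimum gives $\|Pf\|_{\mathrm{TV},p}\le\|f\|_{\mathcal B}$. The reverse inequality is immediate because $Pf$ is itself admissible. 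Hence $P$ is an isometry.

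The main obstacle is the invariance step, and in particular making it rigorous that the polynomial defect $q$ and the origin-vanishing moments do not break representability. For $\alpha\ge1$, an arbitrary admissible $\mu$ need not satisfy \eqref{eq:origin_vanishing_moments}, so the necessity and sufficiency halves of Lemma \ref{lem:spectral_characterization_f} do not match exactly. To handle this I would work at the level of the explicit integral $\int\sigma(\langle w,x\rangle-b)\,d\mu$. Using $\mathcal{R}_\sigma^*\mu=f\in L^1_p$, I would show that the polynomial pieces $\int q(\langle w,x\rangle-b)\,d\mu$ must vanish, since a nonzero polynomial cannot lie in $L^1$. That observation should supply the required moments.
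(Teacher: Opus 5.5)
Your overall structure matches the paper's proof. The twisted antipodal involution $J=\epsilon_\sigma I$, the computation $\widehat{(\Pi\mu)}_w=\tfrac12 S_f(w,\cdot)$ (which makes $P$ independent of the starting representative, hence well defined and linear), and the triangle inequality for optimality are exactly the paper's Steps 1 and 3.

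\textbf{The gap is in showing that $\Pi\mu$ still represents $f$.}
\begin{itemize}
\item You route this through the stronger claim $\mathcal{R}_\sigma^*(J\mu)=\mathcal{R}_\sigma^*\mu$ for every admissible $\mu$. Your way of killing the defect $Q(x)=\int q(\langle w,x\rangle-b)\,d\mu(w,b)$ does not work.
\item From $\mathcal{R}_\sigma^*\mu=f\in L^1_p$ you only get $\mathcal{R}_\sigma^*(J\mu)=f+\epsilon_\sigma Q$. Nothing places $\mathcal{R}_\sigma^*(J\mu)$, and hence $Q$, in $L^1$, so ``a nonzero polynomial is not in $L^1$'' has nothing to act on.
\item Set $\sigma_\epsilon=\tfrac12(\sigma+\epsilon_\sigma\sigma(-\cdot))$. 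What you would actually need is that in $f=\mathcal{R}^*_{\sigma_\epsilon}\mu-\tfrac{\epsilon_\sigma}{2}Q$ the exact-parity network $\mathcal{R}^*_{\sigma_\epsilon}\mu$ cannot absorb the polynomial. Growth at infinity does not decide this.
\item Example: $\sigma(t)=t^2+|t|+ct$ satisfies the hypotheses with $\alpha=3$, $\widehat g(\rho)=2i\rho$, $\epsilon_\sigma=1$. Here $\sigma_\epsilon=t^2+|t|$ already produces $\int(\langle w,x\rangle-b)^2\,d\mu$, whose affine part is of the same type as $Q=-2c\int(\langle w,x\rangle-b)\,d\mu$.
\item Your Fourier-side remark fails as well. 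Equation \eqref{eq:spectral_kernel_cond} is derived on $\mathbb{R}\setminus\{0\}$ and extended by continuity. It is therefore blind to the point-supported terms $\widehat\mu_w\,\widehat q$ in which $Q$ lives.
\end{itemize}

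\textbf{The missing idea is to apply the sufficiency half of Lemma \ref{lem:spectral_characterization_f} to $\Pi\mu$ itself,} whose fiber spectrum is explicit, rather than to an arbitrary $\mu$. This is the paper's route, although the paper asserts \eqref{eq:origin_vanishing_moments} for $\mu_{\mathrm{opt}}$ without checking it. The check goes as follows.
\begin{itemize}
\item If $\widehat g$ vanishes to order $k_0$ at the origin, then $\tfrac12 S_f(w,\rho)=O(|\rho|^{\alpha+d-1-k_0})$ near $0$.
\item Since $\widehat{(\Pi\mu)}_w\in C^{\alpha}$ (because $p\ge\alpha$), all its Taylor coefficients of order $<\alpha$ vanish once $k_0\le d-1$. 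This holds in particular whenever $\widehat g(0)\neq0$, as for ReLU or $|t|$.
\item Sufficiency then gives $\mathcal{R}_\sigma^*\Pi\mu=f$.
\end{itemize}
Your invariance claim then becomes a corollary rather than an input: $\mathcal{R}_\sigma^*\Pi\mu=f+\tfrac{\epsilon_\sigma}{2}Q$ forces $Q=0$ for every admissible $\mu$.

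Hypothesis \ref{hyp:non_zero_g} only gives $k_0<\alpha+d-1$. In the range $d\le k_0<\alpha+d-1$ the moment condition on $\Pi\mu$ is not guaranteed, and neither your argument nor the paper's covers that case.
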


\begin{proof}
Let $f \in \mathcal{B}$ be a target function. By the definition of the quotient norm on the generalized Barron space, the norm is given by the infimum over all representing measures:
\begin{equation*}
    \|f\|_{\mathcal{B}} = \inf \left\{ \|\mu\|_{\text{TV}, p} \mid \mu \in \mathcal{M}_{p}(\mathbb{S}^{d-1} \times \mathbb{R}), \ \mu = \mu_w \underline{dw}, \ \mathcal{R}_\sigma^* \mu = f \right\}.
\end{equation*}

Let $\epsilon_\sigma = (-1)^{\alpha+\beta} \in \{-1, 1\}$ denote the combined parity factor. We define the central spatial inversion operator on measures $I : \mathcal{M}_p \to \mathcal{M}_p$ by $I(\mu)(w, b) = \mu(-w, -b)$. Because the bias weight function $W_p(b) = (1 + |b|)^p$ is perfectly symmetric ($W_p(b) = W_p(-b)$), the operator $I$ acts as an exact isometry, meaning $\|I(\mu)\|_{\text{TV}, p} = \|\mu\|_{\text{TV}, p}$. Furthermore, since the spherical Lebesgue measure $\underline{dw}$ is invariant under the antipodal mapping $w \mapsto -w$, the operator $I$ strictly preserves the absolute continuity of the measure with respect to $\underline{dw}$. In the frequency domain, the 1D fiber Fourier transform of the inverted measure evaluates to $\mathcal{F}_b\{I(\mu)_w\}(\rho) = \widehat{\mu}_{-w}(-\rho)$.

Let $\mu_0 = (\mu_0)_w \underline{dw} \in \mathcal{M}_p(\mathbb{S}^{d-1} \times \mathbb{R})$ be an arbitrary valid absolutely continuous representing measure for $f$. According to Lemma \ref{lem:spectral_characterization_f}, its representation of $f$ implies that it satisfies the spectral condition:
\begin{equation*}
    \widehat{(\mu_0)}_w(\rho) + \epsilon_\sigma \widehat{(\mu_0)}_{-w}(-\rho) = S_f(w, \rho),
\end{equation*}
where $S_f(w, \rho) := (i\rho)^\alpha \frac{|\mathbb{S}^{d-1}|}{(2\pi)^{d-1}} \frac{|\rho|^{d-1}}{\widehat{g}(\rho)} \widehat{f}(\rho w)$ is a source term entirely determined by the target function $f$.

We can define our candidate optimal measure $\mu_{\text{opt}}$ by symmetrizing $\mu_0$ according to the parity of $\sigma$:
\begin{equation*}
    \mu_{\text{opt}} := \frac{1}{2} \Big( \mu_0 + \epsilon_\sigma I(\mu_0) \Big).
\end{equation*}

\paragraph{Step 1: Uniqueness and Linearity.}
Let us compute the fiber spectrum of $\mu_{\text{opt}}$. By linearity of the Fourier transform:
\begin{equation*}
    \widehat{(\mu_{\text{opt}})}_w(\rho) = \frac{1}{2} \Big( \widehat{(\mu_0)}_w(\rho) + \epsilon_\sigma \widehat{(\mu_0)}_{-w}(-\rho) \Big) = \frac{1}{2} S_f(w, \rho).
\end{equation*}
This equation reveals a profound property: the entire spectrum of $\mu_{\text{opt}}$ is explicitly and uniquely defined by $S_f$, which relies solely on $f$. Consequently, the measure $\mu_{\text{opt}}$ is absolutely independent of the initial choice of $\mu_0$. This guarantees uniqueness and allows us to rigorously define the mapping $P(f) := \mu_{\text{opt}}$ as a well-defined linear operator.

\paragraph{Step 2: Representability.}
We must verify that $\mu_{\text{opt}}$ is still a valid representing measure for $f$. By injecting it into the left-hand side of the condition from Lemma \ref{lem:spectral_characterization_f}:
\begin{align*}
    \widehat{(\mu_{\text{opt}})}_w(\rho) + \epsilon_\sigma \widehat{(\mu_{\text{opt}})}_{-w}(-\rho) &= \mathcal{F}_b\Big\{ \mu_{\text{opt}} + \epsilon_\sigma I(\mu_{\text{opt}}) \Big\}_w(\rho) \\
    &= \mathcal{F}_b\left\{ \frac{1}{2}(\mu_0 + \epsilon_\sigma I(\mu_0)) + \frac{\epsilon_\sigma}{2}(I(\mu_0) + \epsilon_\sigma I(I(\mu_0))) \right\}_w(\rho).
\end{align*}
Since $I(I(\mu_0)) = \mu_0$ and $\epsilon_\sigma^2 = 1$, the right-hand side identically simplifies to:
\begin{equation*}
    \mathcal{F}_b\Big\{ \mu_0 + \epsilon_\sigma I(\mu_0) \Big\}_w(\rho) = \widehat{(\mu_0)}_w(\rho) + \epsilon_\sigma \widehat{(\mu_0)}_{-w}(-\rho) = S_f(w, \rho).
\end{equation*}
Because $\mu_{\text{opt}}$ perfectly satisfies both the spectral equation \eqref{eq:spectral_kernel_cond} and the origin-vanishing moments condition \eqref{eq:origin_vanishing_moments}, Lemma \ref{lem:spectral_characterization_f} strictly guarantees that $\mathcal{R}_\sigma^* \mu_{\text{opt}} = f$.

\paragraph{Step 3: Optimality.}
Finally, applying the triangle inequality to our spatial definition of $\mu_{\text{opt}}$ yields:
\begin{equation*}
    \|\mu_{\text{opt}}\|_{\text{TV}, p} = \left\| \frac{1}{2} (\mu_0 + \epsilon_\sigma I(\mu_0)) \right\|_{\text{TV}, p} \le \frac{1}{2} \Big( \|\mu_0\|_{\text{TV}, p} + \|I(\mu_0)\|_{\text{TV}, p} \Big) = \|\mu_0\|_{\text{TV}, p}.
\end{equation*}
This inequality proves that the optimal linear component $\mu_{\text{opt}}$ is bounded by the norm of any initial representing measure $\mu_0$. Taking the infimum over all such valid measures $\mu_0$ gives:
\begin{equation*}
    \|\mu_{\text{opt}}\|_{\text{TV}, p} \le \inf_{\mu_0} \|\mu_0\|_{\text{TV}, p} = \|f\|_{\mathcal{B}}.
\end{equation*}
However, since $\mu_{\text{opt}}$ is itself a valid representing measure, its norm is inherently bounded from below by the infimum ($\|\mu_{\text{opt}}\|_{\text{TV}, p} \ge \|f\|_{\mathcal{B}}$).

We rigorously conclude that $\|\mu_{\text{opt}}\|_{\text{TV}, p} = \|f\|_{\mathcal{B}}$. The linear operator $P(f) = \mu_{\text{opt}}$ therefore inherently isolates a norm-minimizing element, completing the proof.
\end{proof}

\appendix
\section{Fundamental Theorems of Distribution Theory}

This appendix recalls two foundational results from the Schwartz theory of distributions that are implicitly or explicitly used to justify the regularity and the interchange of integrals throughout this article. Their full proofs are classical and can be found in standard functional analysis textbooks.

\begin{theorem}[Structure Theorem for Tempered Distributions]\label{th:structure_distrib}
Let $T \in \mathcal{S}'(\mathbb{R}^n)$ be a tempered distribution. Then, there exist a multi-index $\alpha \in \mathbb{N}^n$, an integer $k \ge 0$, and a continuous function $F : \mathbb{R}^n \to \mathbb{R}$ such that:
\begin{equation}
    T = D^\alpha F \quad \text{in } \mathcal{S}'(\mathbb{R}^n),
\end{equation}
and $F$ has at most slow (polynomial) growth, meaning there exists a constant $C > 0$ satisfying:
\begin{equation}
    |F(x)| \le C(1 + |x|^2)^k \quad \text{for all } x \in \mathbb{R}^n.
\end{equation}
In other words, every tempered distribution can be globally represented as the distributional derivative of finite order of a continuous function with at most polynomial growth.
\end{theorem}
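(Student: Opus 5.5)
The statement to be proved is the Structure Theorem for tempered distributions, Theorem \ref{th:structure_distrib}. I would follow the classical route: use the continuity estimate of a tempered distribution, transfer it to a weighted $L^\infty$-type functional on derivatives, extend by Hahn--Banach, and then realize the extension by integration. Concretely, since $T \in \mathcal{S}'(\mathbb{R}^n)$, there exist $N \in \mathbb{N}$ and $C>0$ with
\begin{equation*}
|\langle T, \phi\rangle| \le C \sum_{|\beta| \le N} \sup_{x} (1+|x|^2)^{N} |\partial^\beta \phi(x)| \quad \forall \phi \in \mathcal{S}(\mathbb{R}^n).
\end{equation*}
The first step is to dominate each sup by an $L^1$ norm of a single higher mixed derivative. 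For $\psi \in \mathcal{S}$ one writes $\psi(x) = \int_{-\infty}^{x_1}\!\cdots\!\int_{-\infty}^{x_n} \partial_1\cdots\partial_n \psi$, so $\sup|\psi| \le \|\partial_1\cdots\partial_n\psi\|_{L^1}$. Applying this to $\psi = (1+|x|^2)^N \partial^\beta\phi$ and expanding with Leibniz, I obtain
\begin{equation*}
|\langle T,\phi\rangle| \le C' \sum_{|\gamma|\le N+n} \int (1+|x|^2)^{N} |\partial^\gamma \phi(x)|\,dx .
\end{equation*}

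The second step reduces this to a single derivative. Let $\mathbf{1} = (1,\dots,1)$ and $M$ be large. I would use the one-dimensional fact that $u \mapsto \partial_j u$ on Schwartz functions integrated against a polynomial weight controls lower derivatives: $\int w |\partial^\gamma\phi| \le \int w' |\partial^{\gamma+e_j}\phi|$ for suitable weights, again by integrating from $-\infty$ in $x_j$. The cleanest way is to build all lower-order terms out of the top derivative $D^{m\mathbf{1}}\phi$ with $m = N+n+1$. One then obtains $|\langle T,\phi\rangle| \le C''\|(1+|x|^2)^{k}D^{m\mathbf{1}}\phi\|_{L^1}$ for some $k$. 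This weight bookkeeping is the step I expect to be the main obstacle. Integrating from $-\infty$ costs a power of $|x|$: $\int |\psi| \lesssim \int (1+|x_j|)^{2}|\partial_j \psi|$ holds only after the weights are chosen carefully, because the $L^1$ and sup bounds must be interleaved. I would handle it with the estimate $|\psi(x)| \le \int_{-\infty}^{x_j}|\partial_j\psi|$, combined with the fact that for $x_j<0$ one integrates from $-\infty$ and for $x_j>0$ from $+\infty$. This gives $\sup (1+|x_j|)^a|\psi| \le \int (1+|x_j|)^a|\partial_j\psi|$, and the result follows by induction on the order.

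The third step is the extension and realization. The map $\Lambda: (1+|x|^2)^{k}D^{m\mathbf{1}}\phi \mapsto \langle T,\phi\rangle$ is well defined, since $D^{m\mathbf{1}}$ is injective on $\mathcal{S}$. It is also bounded on a subspace of $L^1(\mathbb{R}^n)$, so Hahn--Banach extends it to all of $L^1$, and Riesz duality yields $G \in L^\infty$ with
\begin{equation*}
\langle T,\phi\rangle = \int G(x)(1+|x|^2)^{k} D^{m\mathbf{1}}\phi(x)\,dx .
\end{equation*}
Thus $T = (-1)^{mn} D^{m\mathbf{1}}\big(G(1+|x|^2)^k\big)$, where the function $G(1+|x|^2)^k$ is bounded with polynomial growth but possibly discontinuous. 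Continuity is obtained by integrating once more in each variable: set $F(x) = \int_0^{x_1}\!\cdots\!\int_0^{x_n} G(y)(1+|y|^2)^k\,dy$. Then $F$ is continuous with $|F(x)| \le C(1+|x|^2)^{k'}$ for some $k'$, and $T = \pm D^{(m+1)\mathbf{1}}F$. Absorbing the sign into $F$ gives the statement with multi-index $\alpha = (m+1)\mathbf{1}$.
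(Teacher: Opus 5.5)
The paper does not prove this statement. It recalls it in the appendix as a classical fact and points to standard textbooks. Your argument is the standard textbook proof: a weighted-seminorm continuity estimate, reduction to one weighted $L^1$ bound on $D^{m\mathbf{1}}\phi$, Hahn--Banach with $(L^1)^*=L^\infty$, and one more integration in each variable to get a continuous primitive. It is correct, and what it adds over the paper is self-containedness.

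The weight bookkeeping you flag in Step~2 closes cleanly with product weights. First, $(1+|x|^2)^N\le\prod_j(1+|x_j|)^{2N}$. Second, for $a\ge 0$ the one-dimensional bound $\sup_t(1+|t|)^a|\psi(t)|\le\int_{\mathbb{R}}(1+|t|)^a|\psi'(t)|\,dt$ gives $\int_{\mathbb{R}}(1+|t|)^{a-2}|\psi(t)|\,dt\le 2\int_{\mathbb{R}}(1+|t|)^{a}|\psi'(t)|\,dt$. So each extra derivative in $x_j$ costs a factor $(1+|x_j|)^2$, and Fubini carries the other factors along. At the end, $\prod_j(1+|x_j|)^{a}\le C(1+|x|^2)^{na/2}$.

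You could also skip Step~2. Apply Hahn--Banach to $\phi\mapsto\big((1+|x|^2)^N\partial^\gamma\phi\big)_{|\gamma|\le N+n}$ in a finite product of $L^1$ spaces. This gives $T=\sum_\gamma\partial^\gamma H_\gamma$ with polynomially bounded $H_\gamma$, and iterated integration raises every term to a common multi-index.

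The statement itself needs care on one point: it asks for $F$ real-valued. That is impossible for a genuinely complex $T$ such as $i\delta$, because with $D=\partial$ the pairing $\langle\partial^\alpha F,\phi\rangle$ is real for real $\phi$. Your $G\in L^\infty$ is complex in general. For real $T$, which is the only case the paper uses since $\sigma$ is real, replace $G$ by $\operatorname{Re}G$. This is legitimate because $\langle T,\phi\rangle\in\mathbb{R}$ for real $\phi$ and both sides are complex-linear in $\phi$.
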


\begin{theorem}[Integration of Distributions Depending on a Parameter]\label{th:inversion_integrale_crochet}
Let $(\Lambda, \mathcal{A}, \mu)$ be a measure space, $T \in \mathcal{S}'(\mathbb{R}^n)$ be a tempered distribution, and $u : \Lambda \times \mathbb{R}^n \to \mathbb{R}$ be a joint function depending on a parameter $\lambda \in \Lambda$ and a spatial variable $x \in \mathbb{R}^n$. Suppose that:
\begin{enumerate}
    \item For $\mu$-almost every $\lambda \in \Lambda$, the mapping $x \mapsto u(\lambda, x)$ is a test function in $\mathcal{S}(\mathbb{R}^n)$.
    \item The parameter mapping $\lambda \mapsto u(\lambda, \cdot)$ is Bochner-integrable from $\Lambda$ into the Fréchet space $\mathcal{S}(\mathbb{R}^n)$, \textit{i.e.}, for every pair of multi-indices $\alpha, \beta \in \mathbb{N}^n$, the integral of the corresponding Schwartz seminorm is finite:
    \begin{equation}
        \int_{\Lambda} \sup_{x \in \mathbb{R}^n} \left| x^\alpha \partial_x^\beta u(\lambda, x) \right| d\mu(\lambda) < \infty.
    \end{equation}
\end{enumerate}
Under these conditions, the integrated function defined by $v(x) = \int_{\Lambda} u(\lambda, x) \, d\mu(\lambda)$ perfectly belongs to $\mathcal{S}(\mathbb{R}^n)$, and the action of the distribution $T$ rigorously commutes with the integral over the parameter $\lambda$:
\begin{equation}
    \left\langle T_x, \int_{\Lambda} u(\lambda, x) \, d\mu(\lambda) \right\rangle_{\mathbb{R}^n} = \int_{\Lambda} \langle T_x, u(\lambda, x) \rangle_{\mathbb{R}^n} \, d\mu(\lambda).
\end{equation}
\end{theorem}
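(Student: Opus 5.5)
The plan is to avoid abstract Bochner-integration machinery in the Fréchet space $\mathcal{S}(\mathbb{R}^n)$ and argue concretely. I would use the Structure Theorem~\ref{th:structure_distrib} together with the classical Fubini theorem. Throughout, I will assume, as is implicit in hypothesis~2, that $u$ is jointly measurable on $\Lambda \times \mathbb{R}^n$, so that all integrands below are measurable.

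\textbf{Step 1: $v \in \mathcal{S}(\mathbb{R}^n)$.}
\begin{itemize}
\item Fix multi-indices $\gamma,\beta$. For almost every $\lambda$ and every $x$, we have $|\partial_x^\beta u(\lambda,x)| \le \sup_y |\partial_y^\beta u(\lambda,y)| =: m_\beta(\lambda)$, and hypothesis~2 gives $m_\beta \in L^1(\mu)$.
\item This $x$-independent integrable majorant lets us apply differentiation under the integral sign inductively on $|\beta|$. The mean value theorem gives the domination of difference quotients, using the bound on $m_{\beta+e_j}$. Hence $v \in C^\infty$ and $\partial^\beta v(x) = \int_\Lambda \partial_x^\beta u(\lambda,x)\,d\mu(\lambda)$.
\item Multiplying by $x^\gamma$ and taking the supremum yields $\sup_x |x^\gamma \partial^\beta v(x)| \le \int_\Lambda \sup_x |x^\gamma \partial^\beta_x u(\lambda,x)|\,d\mu(\lambda) < \infty$. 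So every Schwartz seminorm of $v$ is controlled by the integrated seminorms of $u$, and $v \in \mathcal{S}(\mathbb{R}^n)$.
\end{itemize}

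\textbf{Step 2: commutation.}
\begin{itemize}
\item Write $T = D^\alpha F$ with $F$ continuous and $|F(x)| \le C(1+|x|^2)^k$, by Theorem~\ref{th:structure_distrib}.
\item Then, for almost every $\lambda$,
\[
\langle T, u(\lambda,\cdot)\rangle = (-1)^{|\alpha|}\int_{\mathbb{R}^n} F(x)\,\partial_x^\alpha u(\lambda,x)\,dx,
\]
and likewise $\langle T, v\rangle = (-1)^{|\alpha|}\int F\,\partial^\alpha v\,dx$, with $\partial^\alpha v = \int_\Lambda \partial^\alpha_x u\,d\mu$ by Step~1.
\item The identity therefore reduces to exchanging $\int_{\mathbb{R}^n}$ and $\int_\Lambda$ in $\iint F(x)\,\partial_x^\alpha u(\lambda,x)\,dx\,d\mu(\lambda)$.
\item For Fubini--Tonelli, bound
\[
|F(x)\,\partial^\alpha_x u(\lambda,x)| \le C(1+|x|^2)^{-n}\,\sup_y\big|(1+|y|^2)^{k+n}\,\partial^\alpha_y u(\lambda,y)\big|.
\]
Expanding $(1+|y|^2)^{k+n}$ as a finite sum of monomials $y^{\gamma}$ shows that the supremum is at most a finite combination of the seminorms in hypothesis~2. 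It is therefore $\mu$-integrable, while $(1+|x|^2)^{-n}$ is Lebesgue integrable.
\item The absolute double integral is thus finite, Fubini applies, and the two sides coincide.
\end{itemize}

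\textbf{Main obstacle.} The delicate point is measurability. Hypothesis~2 as stated only controls seminorm integrals; it does not give measurability of $\lambda \mapsto \langle T,u(\lambda,\cdot)\rangle$ or of the seminorm functions. I would resolve this by requiring joint measurability of $u$. Then $\lambda \mapsto \int F\,\partial^\alpha_x u\,dx$ is measurable by Tonelli. The seminorm suprema may be taken over a countable dense set of $x$, since $u(\lambda,\cdot)$ is continuous, which makes them measurable too.

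Alternatively, one could note that $\mathcal{S}(\mathbb{R}^n)$ is a separable Fréchet space and invoke Pettis' theorem. A continuous linear functional commutes with a Bochner integral, after approximating by simple functions in each seminorm. The concrete Fubini route above avoids this abstraction entirely.
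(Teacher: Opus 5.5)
Your proposal is correct. It also takes a genuinely different route, because the paper has no proof to compare against: the statement is recalled in the appendix as a classical fact. The phrasing of hypothesis~2, Bochner integrability in the Fr\'echet space $\mathcal{S}(\mathbb{R}^n)$, points to the standard abstract argument. That argument uses the fact that a Bochner integral commutes with every continuous linear functional, applied once to $T$ and once to the evaluations $\phi \mapsto \partial^\beta\phi(x)$ to identify the $\mathcal{S}$-valued integral with the pointwise function $v$.

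Your route is concrete instead:
\begin{itemize}
\item Step~1 obtains $v\in\mathcal{S}(\mathbb{R}^n)$ by differentiating under the integral with the $x$-independent majorants $m_\beta$.
\item Step~2 replaces the abstract commutation by the structure theorem $T=D^\alpha F$ plus Fubini. The domination $|F(x)\,\partial^\alpha_x u(\lambda,x)|\le C(1+|x|^2)^{-n}M(\lambda)$ is exactly what is needed, where $M(\lambda):=\sup_y|(1+|y|^2)^{k+n}\partial^\alpha_y u(\lambda,y)|$ is a finite combination of the seminorms in hypothesis~2.
\end{itemize}
Your approach is self-contained and makes the measurability requirements explicit. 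The abstract route uses only the continuity of $T$, not its global structure, and applies verbatim to any continuous functional on any Fr\'echet space. Its price is working with strong measurability of $\mathcal{S}$-valued maps.

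Two small points to tidy up.
\begin{itemize}
\item The joint measurability of $u$ that you impose is not really an extra hypothesis. Genuine Bochner (strong) measurability of $\lambda\mapsto u(\lambda,\cdot)$ already implies it, because $\mathcal{S}(\mathbb{R}^n)$ is separable and metrizable and the evaluation map $(\phi,x)\mapsto\phi(x)$ is continuous on $\mathcal{S}(\mathbb{R}^n)\times\mathbb{R}^n$. It only goes beyond the literal ``i.e.'' clause of the statement.
\item Since $(\Lambda,\mathcal{A},\mu)$ is an arbitrary measure space, Fubini needs one more line. The integrand $F(x)\,\partial^\alpha_x u(\lambda,x)$ vanishes identically when $M(\lambda)=0$, and $\{M>0\}=\bigcup_j\{M>1/j\}$ is $\sigma$-finite because $M\in L^1(\mu)$. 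So you may restrict $\mu$ to this set before exchanging the integrals without changing either side.
\end{itemize}
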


\begin{theorem}[Associativity of Multiplication with Regular Multipliers]\label{th:associativity_distrib}
Let $T \in \mathcal{S}'(\mathbb{R}^n)$ be a tempered distribution of finite order $m \ge 0$. Let $P : \mathbb{R}^n \to \mathbb{C}$ be a polynomial function, and let $\Theta \in C^p(\mathbb{R}^n)$ with $p \ge m$ be a regular multiplier such that $\Theta$ and all its partial derivatives up to order $m$ have at most polynomial growth. Then, the sequential distributional products are well-defined in $\mathcal{S}'(\mathbb{R}^n)$ and satisfy the strict associativity identity:
\begin{equation}
    \Theta \cdot (P \cdot T) = (P \Theta) \cdot T = P \cdot (\Theta \cdot T) \quad \text{in } \mathcal{S}'(\mathbb{R}^n).
\end{equation}
\end{theorem}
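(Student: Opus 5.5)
The plan is to give a rigorous meaning to each product through a continuous extension of $T$. After that, all three expressions in the associativity identity reduce to the same bracket.

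\textbf{Step 1: extend $T$.} Since $T$ has finite order $m$, there exist $C>0$ and $N\in\mathbb{N}$ such that
\[
|\langle T,\varphi\rangle| \le C\sum_{|\gamma|\le N,\ |\beta|\le m}\sup_{x}|x^\gamma\partial^\beta\varphi(x)| \qquad \text{for all } \varphi\in\mathcal{S}(\mathbb{R}^n).
\]
Let $\mathcal{S}^m(\mathbb{R}^n)$ be the space of $C^m$ functions for which all the weighted seminorms $\sup|x^\gamma\partial^\beta\varphi|$ with $|\beta|\le m$ are finite. The first task is to show that $\mathcal{S}(\mathbb{R}^n)$ is dense in $\mathcal{S}^m$ for the seminorms appearing in the bound above. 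For this I would approximate $\varphi\in\mathcal{S}^m$ by $\chi(\cdot/R)\,(\varphi*\eta_\varepsilon)$, where $\chi$ is a smooth cutoff and $\eta_\varepsilon$ a mollifier. The estimate then shows that $T$ extends uniquely and continuously to a linear functional $\tilde T$ on $\mathcal{S}^m$.

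This step is the main technical obstacle. One needs uniform control of $x^\gamma\partial^\beta$ of the truncated mollification in order to pass to the limit. I would handle it in two stages:
\begin{itemize}
\item first absorb a slightly higher weight, using that $\varphi\in\mathcal{S}^m$ has finite seminorms with weight $x^{\gamma}$ for all $\gamma$, so the tails beyond $|x|>R$ are small;
\item then apply uniform continuity of $\partial^\beta\varphi$ on compact sets for the mollification.
\end{itemize}

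\textbf{Step 2: define the products.} For $\varphi\in\mathcal{S}$, the function $\Theta\varphi$ lies in $C^p\subset C^m$. By Leibniz's rule, its derivatives up to order $m$ are sums of terms $\partial^{\beta_1}\Theta\,\partial^{\beta_2}\varphi$. The first factor has polynomial growth and the second decays rapidly, so $\Theta\varphi\in\mathcal{S}^m$. The same holds for $P\Theta\varphi$, since $P\Theta$ is again $C^p$ with derivatives up to order $m$ of polynomial growth. I then define
\[
\langle \Theta\cdot T,\varphi\rangle := \langle \tilde T,\Theta\varphi\rangle, \qquad \langle P\cdot T,\varphi\rangle := \langle T,P\varphi\rangle,
\]
where the second is classical because $P\varphi\in\mathcal{S}$. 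Continuity of $\varphi\mapsto\Theta\varphi$ from $\mathcal{S}$ to $\mathcal{S}^m$ follows from the same Leibniz estimate, with constants depending on the polynomial bounds for $\partial^{\beta}\Theta$, $|\beta|\le m$. Hence $\Theta\cdot T\in\mathcal{S}'(\mathbb{R}^n)$.

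I must also check that $P\cdot T$ still has order $m$, so that $\Theta\cdot(P\cdot T)$ is defined by the same recipe. This holds because multiplying by $P$ only raises the polynomial weight $N$ and does not increase the derivative order. The extension of $P\cdot T$ is then $\psi\mapsto\langle\tilde T,P\psi\rangle$ on $\mathcal{S}^m$, by uniqueness of the continuous extension.

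\textbf{Step 3: compare the three expressions.} For every $\varphi\in\mathcal{S}$,
\begin{align*}
\langle \Theta\cdot(P\cdot T),\varphi\rangle &= \langle \widetilde{P T},\Theta\varphi\rangle = \langle \tilde T,P\Theta\varphi\rangle,\\
\langle (P\Theta)\cdot T,\varphi\rangle &= \langle \tilde T,P\Theta\varphi\rangle,\\
\langle P\cdot(\Theta\cdot T),\varphi\rangle &= \langle \Theta\cdot T,P\varphi\rangle = \langle \tilde T,\Theta P\varphi\rangle.
\end{align*}
The first line uses the identification of $\widetilde{PT}$ from Step 2, and the third uses $P\varphi\in\mathcal{S}$. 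Since pointwise multiplication of functions is commutative and associative, the three right-hand sides coincide, which gives the claimed identity in $\mathcal{S}'(\mathbb{R}^n)$.
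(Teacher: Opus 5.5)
Your proposal is correct and follows the same route as the paper. Both arguments extend $T$ continuously to the space $\mathcal{S}^m$ of $C^m$ rapidly decaying functions, define each product by transposition, and reduce the identity to pointwise associativity and commutativity of functions. You also supply details the paper leaves implicit: the density of $\mathcal{S}$ in $\mathcal{S}^m$, the continuity of $\varphi\mapsto\Theta\varphi$ from $\mathcal{S}$ into $\mathcal{S}^m$, and the identification of the extension of $P\cdot T$ as $\psi\mapsto\langle\tilde T,P\psi\rangle$, which the paper's chain $\langle P\cdot T,\Theta\phi\rangle=\langle T,P(\Theta\phi)\rangle$ uses without comment.
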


\begin{proof}
Because $T \in \mathcal{S}'(\mathbb{R}^n)$ is a tempered distribution of finite order $m$, it is continuous with respect to the Schwartz seminorms involving partial derivatives of order at most $m$. Consequently, the duality pairing $\langle T, \psi \rangle$ extends uniquely as a continuous linear functional to the space $\mathcal{S}^m(\mathbb{R}^n)$ of $C^m$ functions whose derivatives up to order $m$ decay faster than the reciprocal of any polynomial.

Let $\phi \in \mathcal{S}(\mathbb{R}^n)$ be an arbitrary Schwartz test function. Under the stated polynomial growth and $C^p$ regularity assumptions ($p \ge m$), the products $\Theta \phi$, $P \phi$, and $(P \Theta) \phi$ are $C^m$ functions with rapid decay, meaning they strictly belong to $\mathcal{S}^m(\mathbb{R}^n)$. 

By the definition of multiplication via transposition extended to finite-order distributions, the action of a multiplier $h \in C^m(\mathbb{R}^n)$ on $T$ is well-defined by the pairing $\langle h \cdot T, \phi \rangle = \langle T, h \phi \rangle$. Applying this definition sequentially to the left-hand side yields:
\begin{equation*}
    \langle \Theta \cdot (P \cdot T), \phi \rangle_{\mathbb{R}^n} = \langle P \cdot T, \Theta \phi \rangle_{\mathbb{R}^n} = \langle T, P (\Theta \phi) \rangle_{\mathbb{R}^n}.
\end{equation*}
In the algebra of ordinary complex-valued functions on $\mathbb{R}^n$, multiplication is strictly associative and commutative pointwise, meaning $P(x) (\Theta(x) \phi(x)) = (P(x) \Theta(x)) \phi(x) = \Theta(x) (P(x) \phi(x))$ for all $x \in \mathbb{R}^n$. Substituting this pointwise algebraic identity back into the well-defined duality bracket gives:
\begin{equation*}
    \langle T, P (\Theta \phi) \rangle_{\mathbb{R}^n} = \langle T, (P \Theta) \phi \rangle_{\mathbb{R}^n} = \langle (P \Theta) \cdot T, \phi \rangle_{\mathbb{R}^n},
\end{equation*}
and similarly:
\begin{equation*}
    \langle T, \Theta (P \phi) \rangle_{\mathbb{R}^n} = \langle \Theta \cdot T, P \phi \rangle_{\mathbb{R}^n} = \langle P \cdot (\Theta \cdot T), \phi \rangle_{\mathbb{R}^n}.
\end{equation*}
Since the equality of brackets $\langle \Theta \cdot (P \cdot T), \phi \rangle_{\mathbb{R}^n} = \langle (P \Theta) \cdot T, \phi \rangle_{\mathbb{R}^n} = \langle P \cdot (\Theta \cdot T), \phi \rangle_{\mathbb{R}^n}$ holds unconditionally for any arbitrary test function $\phi \in \mathcal{S}(\mathbb{R}^n)$, the distributions are strictly identical in $\mathcal{S}'(\mathbb{R}^n)$.
\end{proof}
\section*{Statements and Declarations}

This work is supported by Agence Nationale de Recherche (ANR), grant number ANR-24-CPJ1-0081-01 (BIOMOD). The author has no relevant financial or non-financial interests to disclose. Data and code will be made available upon reasonable request.
\bibliographystyle{plain} 

\bibliography{biblio} 

\end{document}